\numberwithin{equation}{section}
\newtheorem{theorem}{Theorem}[section]
\newtheorem*{thmACT}{Theorem \ref{thm:ACT2}}
\newtheorem{lemma}[theorem]{Lemma}
\newtheorem{proposition}[theorem]{Proposition}
\newtheorem{corollary}[theorem]{Corollary}
\newtheorem{definition}[theorem]{Definition}
\newtheoremstyle{rem}{}{}{}{0pt}{\bfseries}{.}{5pt}{}
\theoremstyle{rem}
\newtheorem*{remark}{Remark}
\newtheoremstyle{ack}{}{}{}{\parindent}{\itshape\bfseries}{.}{5pt}{}
\theoremstyle{ack}
\newtheoremstyle{ass}{}{}{\itshape}{0pt}{\bfseries}{:}{5pt}{}
\theoremstyle{ass}
\newtheorem{ass}{Assumption}
\newcommand\eps\varepsilon
\newcommand\flow\varphi
\newcommand\abs[1]{\left\lvert#1\right\rvert}
\newcommand\norm[1]{\left\lVert#1\right\rVert}
\newcommand\const[2]{#1^{\scriptscriptstyle (#2)}}
\newcommand\R{\mathbf{R}}
\newcommand\N{\mathbf{N}}
\newcommand\dx{\mathrm{d}}
\newcommand\sprod[1]{\left\langle#1\right\rangle}
\DeclareMathOperator{\Lipschitz}{Lip}
\newcommand\Lip{\Lipschitz}
\DeclareMathOperator{\diam}{diam}
\DeclareMathOperator{\id}{id}
\DeclareMathOperator{\graph}{graph}
\DeclareMathOperator{\Int}{Int}
\DeclareMathOperator{\vol}{vol}
\DeclareMathOperator{\dimension}{dim}
\DeclareMathOperator{\Emb}{Emb}
\DeclareMathOperator{\Diff}{Diff}
\DeclareMathOperator{\Span}{span}
\DeclareMathOperator{\determinante}{det}
\newcommand\uTS[1]{H_{#1}(\omega,x)}
\newcommand\sTS[1]{E_{#1}(\omega,x)}
\newcommand\Lnorm[2][\empty]{\ifthenelse{\equal{#1}{\empty}} {\left\lVert#2\right\rVert_{(\omega,x),n}} {\left\lVert#2\right\rVert_{(\omega,x),#1}} }
\newcommand\LnormAt[3][\empty]{\ifthenelse{\equal{#1}{\empty}} {\left\lVert#3\right\rVert_{(\omega,#2),n}} {\left\lVert#3\right\rVert_{(\omega,#2),#1}} }
\newcommand\Linner[3][\empty]{\ifthenelse{\equal{#1}{\empty}} {\left\langle #2, #3 \right\rangle_{(\omega,x),n}} {\left\langle #2, #3 \right\rangle_{(\omega,x),#1}}}
\newcommand\sball[3][\empty]{\ifthenelse{\equal{#1}{\empty}} {B_{#3}^s\left(#2\right)} {B_{#3}^s\left(#1,#2\right)} }
\newcommand\uball[3][\empty]{\ifthenelse{\equal{#1}{\empty}} {B_{#3}^u\left(#2\right)} {B_{#3}^u\left(#1,#2\right)} }
\newcommand\uballat[4][\empty]{\ifthenelse{\equal{#1}{\empty}} {B_{#4,#3}^u\left(#2\right)} {B_{#4,#3}^u\left(#1,#2\right)} }
\newcommand\sLball[3][\empty]{\ifthenelse{\equal{#1}{\empty}} {\tilde B_{#3}^s\left(#2\right)} {\tilde B_{#3}^s\left(#1,#2\right)} }
\newcommand\uLball[3][\empty]{\ifthenelse{\equal{#1}{\empty}} {\tilde B_{#3}^u\left(#2\right)} {\tilde B_{#3}^u\left(#1,#2\right)} }
\newcommand\ball[3][\empty]{\ifthenelse{\equal{#1}{\empty}} {B_{#3}\left(#2\right)} {B_{#3}(#1,#2)} }
\newcommand\Lball[3][\empty]{\ifthenelse{\equal{#1}{\empty}} {\tilde B_{#3}\left(#2\right)} {\tilde B_{#3}\left(#1,#2\right)} }
\newcommand\pLball[3][\empty]{\ifthenelse{\equal{#1}{\empty}} {\tilde U \left(#2,#3\right)} {\tilde U_{#1} \left(#2,#3\right)} }
\newcommand\prodm[1]{\nu^{\N} \times \mu \left(#1\right)}
\newcommand\collLSM[2]{\mathcal{F}_{\Delta^l_{\omega}}(#1,#2)}
\newcommand\localLSM[2]{\tilde\Delta_{\omega}^{l}(#1,#2)}
\newcommand\poincare[1]{P_{W^1,W^2}\left(#1\right)}
\newcommand\LSM
\newcommand\rref[1]{(\ref{#1})}
\title{Absolute Continuity Theorem \\ for Random Dynamical Systems on $\R^d$}
\author{Moritz Biskamp\footnote{Institut f\"ur Mathematik, MA 7-4, Technische Universit\"at Berlin, Stra{\ss}e des 17.\ Juni 136, D-10623 Berlin, biskamp@math.tu-berlin.de}}
\begin{document}

\maketitle

\begin{abstract}
In this article we provide a proof of the so called absolute continuity theorem for random dynamical systems on $\R^d$ which have an invariant probability measure. First we present the construction of local stable manifolds in this case. Then the absolute continuity theorem basically states that for any two transversal manifolds to the family of local stable manifolds the induced Lebesgue measures on these transversal manifolds are absolutely continuous under the map that transports every point on the first manifold along the local stable manifold to the second manifold, the so-called Poincar{\'e} map or holonomy map. In contrast to known results, we have to deal with the non-compactness of the state space and the randomness of the random dynamical system.
\end{abstract}

\vspace{1ex}
\noindent{\small{\it Keywords}: Ergodic theory, random dynamical systems, Pesin theory, Lyapunov exponents, absolute continuity property}

\vspace{2ex}
\noindent{\small {\it Mathematics Subject Classifications}: primary 37D25 37H15; secondary 37A35 60H10}

\section{Introduction}

The absolute continuity theorem is one of the main ingredients to prove Pesin's formula, which relates the entropy of a smooth dynamical system with its positive Lyapunov exponents. This remarkable formula was first established for deterministic dynamical systems on a compact Riemannian manifold preserving a smooth measure (see \citep{Pesin76}, \citep{Pesin77} and \citep{Pesin77b}). For random dynamical systems on $\R^d$ we will give a proof in \citep{Biskamp11}.

For deterministic dynamical systems preserving a smooth probability measure, which is roughly speaking the process generated by successiv applications of a diffeomorphism on some space or manifold, Pesin first proved general results concerning the existence of families of stable manifolds (see \citep{Pesin76}). Later, results were generalized to dynamical systems preserving only a Borel measure (see \citep{Ruelle79}, \citep{Fathi83}) and for dynamical systems with singularities (see \citep{Katok86}). In \citep{Barreira07} one finds a comprehensive and self-contained account on the theory dynamical systems with nonvanishing Lyapnov exponents, i.e. non-uniform hyperbolicity theory.

In this article we are interested in random dynamical systems, i.e. the evolution of the process generated by the successive application of {\it random} diffeomorphisms, which will be assumed to be chosen independently according to some probability measure on the set of diffeomorphisms. Since it is much too restrictive to assume invariance of some probability measure for each diffeomorphism, the notion of invariance was extended to random dynamical system in \citep{Kifer86}: a probability measure is said to be invariant for a random dynamical system if the average over all possible diffeomorphisms preserves the measure (see definition below). Then in \citep{Kifer86}, \citep{Ledrappier88} and \citep{Liu95}, Pesin's results were generalized to random dynamical system on compact Riemannian manifolds.

In this article and in \citep{Biskamp11} we will extend the results to random dynamical systems on the non-compact space $\R^d$. The main application we have in mind when we consider random dynamical systems on $\R^d$ are stochastic flows on $\R^d$ with stationary and independent increments. In \citep{Arnold95} it was proven that under some regularity assumptions there is a one to one relation between random dynamical systems and stochastic flows of a Kunita type (see \citep{Kunita90}). 

In the first part of the paper we will present the construction and the existence of local stable manifolds for random dynamical system on $\R^d$, which provide an invariant probability measure. This chapter follows very closely the general plan of \citep{Liu95}. Roughly speaking, the stable manifold at any point $x$ in space consists of those points which converge by application of the iterated functions with exponential speed to the iterated of $x$. One important construction within the proof is to define sets, nowadays called Pesin sets, which are chosen in such a way that one has uniform hyperbolicity on these sets (see Section \ref{sec:lyapunovmetric}), i.e. uniform bounds (in space {\it and} randomness) on the behaviour of the differential of the iterated maps (see Lemma \ref{lem:ExistenceOfl}).

If we consider a small region around some point $x$ in space and two manifolds, which are transversal (see Definition \ref{def:transversalMfld}) to each local manifold in this region, the absolute continuity theorem states that the induced Lebesgue measures on these manifolds are absolutely continuous under the map that transports every point on the first manifold along the local stable manifold to the second manifold. This is usually called Poincar{\'e} map or holonomy map. Even more we can show that the Jacobian of the Poincar{\'e} map is bounded away from $0$. The main conclusion that follows from the absolute continuity theorem is that the conditional measure with respect to the family of local stable manifolds of the volume on the state space is absolutely continuous (in fact, even equivalent) to the induced volume on the local stable manifolds (see \citep[Section 7]{Biskamp11}). This absolute continuity property was also first established by Pesin in his famous paper \citep{Pesin76} for deterministic dynamical system and extended to the random setting in \citep{Ledrappier88}. In this article we state the proof for the absolute continuity theorem, which will stick very closely to the proof of \citep{Katok86} which itself follows Pesin's original proof. The proof presented here is thus a detailed and complete proof in the case of random dynamical systems on $\R^d$ which posseses an invariant probability measure.

Finally let us emphasize that we obviously can not equip the space of two-times continuously differentiable diffeomorphisms on $\R^d$ with the uniform topology, as done in the case of a compact state space. Here we will use the topology induced by uniform convergence on compact sets (see \citep{Kunita90}). Clearly by this we lose the uniform bounds used in \citep{Liu95} to establish local stable manifolds (in particular Lemma \ref{lem:ExistenceOfr}). To replace these uniform bounds we need to assume certain integrability assumptions (see Section \ref{sec:rds}). In case of stochastic flows on $\R^d$ we will show in \citep{Biskamp11} that all these assumptions are satisfied for a broad class of stochastic flows.

The article is organized as follows. In Section \ref{sec:Preliminaries} we give an introduction to random dynamical systems and present the construction of local and global stable manidfolds. In Section \ref{sec:TheoremAbsoluteContinuity} we will state the absolute continuity theorem and prove it in Section \ref{sec:FinalProofACT}, whereas Section \ref{sec:ACT} is devoted to the preparation of the final proof.

\section{Preliminaries} \label{sec:Preliminaries} 

\subsection{Random Dynamical Systems} \label{sec:rds}

Let us abbreviate the set of twice continuously differentiable diffeomorphisms on $\R^d$ by $\Omega$. The topology on $\Omega$ is the one induced by uniform convergence on compact sets for all derivatives up to order 2 as described in \citep[Section 3.1]{Kunita90}. With this topology $\Omega$ becomes a separable Banach space. Let us fix a Borel probability measure $\nu$ on $(\Omega, \mathcal{B}(\Omega))$, where $\mathcal{B}(\Omega)$ denotes the Borel $\sigma$-algebra of $\Omega$.

We are interested in ergodic theory of the evolution process generated by successive applications of randomly chosen maps from $\Omega$. These maps will be assumed to be independent and identically distributed with law $\nu$. Thus let
\begin{align*}
\left(\Omega^\N, \mathcal{B}(\Omega)^\N, \nu^\N\right) = \prod_{i =0}^{+\infty} (\Omega, \mathcal{B}(\Omega),\nu)
\end{align*}
be the infinite product of copies of the measure space $(\Omega, \mathcal{B}(\Omega),\nu)$. Let us define for every $\omega = (f_0(\omega), f_1(\omega), \dots) \in \Omega^\N$ and $n \geq 0$
\begin{align*}
f^0_\omega = \id, \qquad f^n_\omega = f_{n-1}(\omega) \circ f_{n-2}(\omega) \circ \dots \circ f_0(\omega).
\end{align*}
The random dynamical system generated by these composed maps, i.e. $\{f^n_\omega : n \geq 0, \omega \in (\Omega, \mathcal{B}(\Omega),\nu)\}$ will be referred to as $\mathcal{X}^+(\R^d, \nu)$.

Let us further define the important space $\Omega^\N \times \R^d$ equipped with the  product $\sigma$-algebras  $\mathcal{B}(\Omega)^\N \times \mathcal{B}(\R^d)$. As already mentioned above $\Omega$ is a separable Banach space with the uniform topology on compact sets. Hence we have
\begin{align*}
\mathcal{B}(\Omega)^\N \times \mathcal{B}(\R^d) &= \mathcal{B}(\Omega^\N \times \R^d),
\end{align*}
Further let us denote by $\tau$ the left shift operator on $\Omega^\N$, namely
\begin{align*}
 f_n(\tau \omega) = f_{n+1}(\omega)
\end{align*}
for all $\omega = (f_0(\omega), f_1(\omega), \dots) \in \Omega^\N$ and $n \geq 0$. Finally let
\begin{align*}
F&: \Omega^\N \times \R^d \to \Omega^\N \times \R^d, &(\omega,x) \mapsto (\tau \omega, f_0(\omega) x).
\end{align*}
The system $(\Omega^\N \times \R^d, F)$ will be a link between the analysis of random dynamical systems and that of deterministic dynamical systems.

Now we will come to the notion of invariant measures of $\mathcal{X}^+(\R^d, \nu)$.

\begin{definition} \label{def:invariantMeasure}
A Borel probability measure $\mu$ on $\R^d$ is called an invariant measure of $\mathcal{X}^+(\R^d, \nu)$ if
\begin{align*}
\int_\Omega \mu(f^{-1}(\cdot)) \dx \nu(f) = \mu.
\end{align*}
\end{definition}

From now let us assume that there exists an invariant measure $\mu$ of $\mathcal{X}^+(\R^d, \nu)$ and let us denote the random dynamical system associated with $\mu$ by $\mathcal{X}^+(\R^d,\nu,\mu)$. From \citep[Lemma I.2.3]{Kifer86} we have the following Lemma, which relates the notion of invariance defined above with the invariance with respect to the skew product, i.e. the function $F$ on $\Omega^\N \times \R^d$.

\begin{lemma}
Let $\mu$ be a probability measure on $\R^d$. Then $\mu$ is an invariant measure of $\mathcal{X}^+(\R^d, \nu)$ (in the sense of Definition \ref{def:invariantMeasure}) if and only if $\nu^\N \times \mu$ is $F$-invariant, i.e. $(\nu^\N\times\mu) \circ F^{-1} = \nu^\N\times\mu$.
\end{lemma}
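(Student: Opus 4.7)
The plan is to verify $F$-invariance on a $\pi$-system that generates $\mathcal{B}(\Omega)^\N \times \mathcal{B}(\R^d)$, namely on measurable rectangles $C \times B$ with $C$ a finite cylinder in $\Omega^\N$ and $B \in \mathcal{B}(\R^d)$. The key structural fact I will exploit is that under $\nu^\N$ the coordinate $f_0$ is independent of the shifted tail $(f_1, f_2, \ldots)$, which lets the check reduce to the defining relation in Definition \ref{def:invariantMeasure}.

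For the necessity direction, assume $\mu$ is invariant. Fix a cylinder $C = A_0 \times \cdots \times A_{n-1} \times \Omega^\N$ with $A_i \in \mathcal{B}(\Omega)$ and $B \in \mathcal{B}(\R^d)$. Unpacking the definition of $F$, one has
\begin{align*}
F^{-1}(C \times B) = \{(\omega,x) : f_1(\omega)\in A_0,\ldots,f_n(\omega)\in A_{n-1},\ f_0(\omega)x \in B\}.
\end{align*}
Integrating the indicator, Fubini together with the independence of $f_0$ from $(f_1,\ldots,f_n)$ gives
\begin{align*}
(\nu^\N \times \mu)(F^{-1}(C\times B)) = \nu(A_0)\cdots\nu(A_{n-1}) \int_\Omega \mu(f^{-1}(B))\,\dx\nu(f).
\end{align*}
By invariance of $\mu$ the last integral equals $\mu(B)$, and the right-hand side is exactly $(\nu^\N\times\mu)(C\times B)$. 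Since such rectangles form a $\pi$-system generating $\mathcal{B}(\Omega^\N\times\R^d)$, a standard Dynkin/uniqueness argument extends the equality to all measurable sets, so $\nu^\N\times\mu$ is $F$-invariant.

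For the converse, assume $(\nu^\N\times\mu)\circ F^{-1} = \nu^\N\times\mu$. Specializing to $C = \Omega^\N$ and an arbitrary $B \in \mathcal{B}(\R^d)$, the preimage is $F^{-1}(\Omega^\N\times B) = \{(\omega,x): f_0(\omega)x \in B\}$, whose measure, again by Fubini, is $\int_\Omega \mu(f^{-1}(B))\,\dx\nu(f)$. The $F$-invariance forces this to equal $\mu(B)$ for every Borel $B$, which is precisely the condition of Definition \ref{def:invariantMeasure}.

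The only subtlety I anticipate is purely bookkeeping: making sure the identification $\mathcal{B}(\Omega)^\N \times \mathcal{B}(\R^d) = \mathcal{B}(\Omega^\N\times\R^d)$ recorded earlier in the section is used to justify that the cylinder-times-Borel rectangles form a generating $\pi$-system, and making the independence invocation (which is simply the product structure of $\nu^\N$) explicit. No genuine analytic obstacle arises; the lemma is essentially a Fubini calculation combined with the cylinder-generating argument.
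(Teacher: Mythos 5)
Your argument is correct. Note that the paper itself gives no proof of this lemma --- it simply cites \citep[Lemma I.2.3]{Kifer86} --- and your cylinder-plus-Fubini computation is essentially the standard argument given there: check the identity $(\nu^\N\times\mu)(F^{-1}(C\times B)) = (\nu^\N\times\mu)(C\times B)$ on rectangles $C\times B$ with $C$ a finite cylinder, using that under $\nu^\N$ the coordinate $f_0$ is independent of the shifted tail, and then extend by the uniqueness theorem for measures agreeing on a generating $\pi$-system; the converse follows by specializing to $C=\Omega^\N$. The only point worth making explicit (and which the paper's setup already supplies) is the measurability of $F$, i.e.\ of $(\omega,x)\mapsto f_0(\omega)x$, and of $f\mapsto \mu(f^{-1}(B))$, which is needed for both sides of the identity to make sense; with that noted, your proof is complete and matches the intended one.
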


\begin{proof}
See \citep[Lemma I.2.3]{Kifer86}.
\end{proof}

Let us denote the tangent space at some point $y \in \R^d$ by $T_y\R^d$. Although this is quite unusual for systems on $\R^d$ we will stick to the notation from \citep{Liu95}. So let us define the following map, in differential geometry known as the exponential function, for $y \in \R^d$
\begin{align*}
\exp_y : \R^d \cong T_y \R^d \to \R^d, \quad x \mapsto \exp_y(x) := x + y,
\end{align*}
where $\cong$ means that the two spaces are isometrically isomorphic and thus can be identified. In the following we will use this often implicitely. Then we can define for $(\omega,x) \in \Omega^\N \times \R^d$ and $n \geq 0$ the map
\begin{align*}
F_{(\omega,x),n} : T_{f^{n}_\omega x} \R^d \to T_{f^{n+1}_\omega x}\R^d; \qquad
F_{(\omega,x),n} := \exp_{f^{n+1}_\omega x}^{-1} \circ f_n(\omega) \circ \exp_{f^{n}_\omega x},
\end{align*}
which is the evolution centered around the trajectory of $x$, i.e. $F_{(\omega,x),n}(0) = 0$ for all $n \geq 0$. Throughout this article we will assume that the random dynamical system $\mathcal{X}^+(\R^d, \nu, \mu)$ satisfies the following three integrability assumptions on $\nu$ and $\mu$:

\begin{ass} \label{ass1}
Let $\nu$ and $\mu$ satisfy
\begin{align*}
\log^+\abs{D_x f_0(\omega)} &\in \mathcal{L}^1(\nu^\N\times\mu),
\end{align*}
where $\abs{D_x f_0(\omega)}$ denotes the operator norm of the differential as a linear operator from $T_x \R^d$ to $T_{f_0(\omega) x} \R^d$ and $\log^+(a) = \max\{\log(a); 0\}$.
\end{ass}

\begin{ass}\label{ass2}
Let $\nu$ and $\mu$ satisfy
\begin{align*}
\log\left(\sup_{\xi \in B_x(0,1)}\abs{D^2_\xi F_{(\omega,x),0}}\right) &\in \mathcal{L}^1(\nu^\N\times\mu),\\
\log\left(\sup_{\xi \in B_x(0,1)}\abs{D^2_{F_{(\omega,x),0}(\xi)} F^{-1}_{(\omega,x),0}}\right) &\in \mathcal{L}^1(\nu^\N\times\mu),
\end{align*}
where $B_x(0,r)$ denotes the open ball in $T_x \R^d$ around the origin with radius $r >0$ and $D^2$ is the second derivative operator.
\end{ass}

\begin{ass} \label{ass1b}
Let $\nu$ and $\mu$ satisfy
\begin{align*}
\log \abs{D_0F_{(\omega,x),0}^{-1}} = \log\abs{D_{f_0(\omega)x} f_0(\omega)^{-1}} &\in \mathcal{L}^1(\nu^\N\times\mu).
\end{align*}
\end{ass}

Assumption \ref{ass1} is necessary for the application of the multiplicative ergodic theorem (see next section), whereas Assumption \ref{ass1b} is used in Lemma \ref{lem:DerivativeEstimate} to achieve an estimate on the derivative of the inverse. Assumption \ref{ass2} is used in Lemma \ref{lem:ExistenceOfr} to get a uniform bound on the Lipschitz-constant of the derivative and its inverse on some specific set $\Gamma_0 \subset \Omega^\N \times \R^d$. Let us remark that Assumption \ref{ass2} can be relaxed by taking not the unit ball in $T_x\R^d$ into consideration but some ball with positive radius.

\subsection{Multiplicative Ergodic Theorem and Lyapunov Exponents}

By Assumption \ref{ass1} in the previous section the multiplicative ergodic theorem yields the existence of linear subspaces with corresponding Lyapunov exponents, which play an extraordinary important role in the analysis of dynamical systems. The following theorem is \citep[Theorem I.3.2]{Liu95}.

\begin{theorem} \label{thm:met}
For the given system $\mathcal{X}^+(\R^d,\nu,\mu)$ there exists a Borel set $\Lambda_0 \subset \Omega^\N \times \R^d$ with $\nu^\N \times \mu (\Lambda_0) = 1$, $F \Lambda_0 \subset \Lambda_0$ such that:
\begin{enumerate}
\item For every $(\omega,x) \in \Lambda_0$ there exists a sequence of linear subspaces of $T_x\R^d$
\begin{align*}
\{0\} = V^{(0)}_{(\omega,x)} \subset V^{(1)}_{(\omega,x)} \subset \ldots \subset V^{(r(x))}_{(\omega,x)}
\end{align*}
and numbers (called Lyapunov exponents)
\begin{align*}
\lambda^{(1)}(x) < \lambda^{(2)}(x) < \ldots < \lambda^{(r(x))}(x)
\end{align*}
($\lambda^{(1)}(x)$ may be $-\infty$), which depend only on $x$, such that
\begin{align*}
\lim_{n\to +\infty} \frac{1}{n} \log \abs{D_x f^n_\omega \xi} = \lambda^{(i)}(x)
\end{align*}
for all $\xi \in V^{(i)}_{(\omega,x)} \setminus V^{(i-1)}_{(\omega,x)}$, $1 \leq i \leq r(x)$, and in addition
\begin{align*}
\lim_{n \to +\infty} \frac{1}{n} \log \abs{D_x f^n_\omega} &= \lambda^{(r(x))}(x)\\
\lim_{n \to +\infty} \frac{1}{n} \log \abs{\determinante (D_x f^n_\omega)} &= \sum_i \lambda^{(i)}(x) m_i(x)
\end{align*}
where $m_i(x) = \dimension(V^{(i)}_{(\omega,x)}) - \dimension(V^{(i-1)}_{(\omega,x)})$, which depends only on $x$ as well. Moreover, $r(x), \lambda^{(i)}(x)$ and $V^{(i)}_{(\omega,x)}$ depend measurably on $(\omega,x) \in \Lambda_0$ and
\begin{align*}
r(f_0(\omega)x) = r(x),\quad \lambda^{(i)}(f_0(\omega)x) = \lambda^{(i)}(x), \quad 
D_x f_0(\omega) V^{(i)}_{(\omega,x)} = V^{(i)}_{F(\omega,x)},
\end{align*}
for each $(\omega,x) \in \Lambda_0$, $1 \leq i \leq r(x)$.

\item For each $(\omega,x) \in \Lambda_0$, we introduce
\begin{align}
 \rho^{(1)}(x) \leq  \rho^{(2)}(x) \leq \ldots \leq \rho^{(d)}(x)
\end{align}
to denote $\lambda^{(1)}(x), \dots, \lambda^{(1)}(x), \dots, \lambda^{(i)}(x), \dots, \lambda^{(i)}(x), \dots \lambda^{(r(x))}(x), \dots, \lambda^{(r(x))}(x)$ with $\lambda^{(i)}(x)$ being repeated $m_i(x)$ times. Now, for $(\omega,x) \in \Lambda_0$, if $\{\xi_1, \dots, \xi_d\}$ is a basis of $T_x \R^d$ which satisfies
\begin{align*}
\lim_{n \to +\infty} \frac{1}{n} \log \abs{D_x f^n_\omega \xi_i} = \rho^{(i)}(x)
\end{align*}
for every $1 \leq i \leq d$, then for every two non-empty disjoint subsets $P,Q \subset \{1, \dots, d\}$ we have
\begin{align*}
\lim_{n \to +\infty} \frac{1}{n} \log \gamma (D_x f^n_\omega E_P, D_x f^n_\omega E_Q) = 0,
\end{align*}
where $E_P$ and $E_Q$ denote the subspaces of $T_x \R^d$ spanned by the vectors $\{\xi_i\}_{i \in P}$ and $\{\xi_j\}_{j \in Q}$ respectively and $\gamma(\cdot, \cdot)$ denotes the angle between the two associated subspaces.
\end{enumerate}
\end{theorem}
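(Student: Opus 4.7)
The plan is to reduce everything to a direct application of Oseledets' multiplicative ergodic theorem to the derivative cocycle over the skew product $F$, and then to upgrade the resulting $(\omega,x)$-dependence to $x$-dependence by exploiting the tail $\sigma$-algebra of the i.i.d.\ sequence $\omega=(f_0,f_1,\dots)$.

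First I would identify each $T_y\R^d$ with $\R^d$ via translation and define the $\mathrm{GL}(d,\R)$-valued cocycle
\[
\Phi_n(\omega,x) := D_x f^n_\omega,
\]
which satisfies $\Phi_{n+m}(\omega,x) = \Phi_n(F^m(\omega,x))\,\Phi_m(\omega,x)$ by the chain rule. Assumption \ref{ass1} gives $\log^+\abs{\Phi_1} \in \mathcal{L}^1(\nu^\N\times\mu)$, and $\nu^\N\times\mu$ is $F$-invariant, so the classical Oseledets theorem (in Ruelle's or Arnold's formulation) provides an $F$-invariant Borel set $\Lambda_0$ of full measure on which the filtration $V^{(i)}_{(\omega,x)}$, the distinct Lyapunov exponents $\lambda^{(i)}(\omega,x)$ with multiplicities $m_i(\omega,x)$, the number $r(\omega,x)$ of distinct exponents, and all the asymptotic formulas for $\abs{D_x f^n_\omega \xi}$, $\abs{D_x f^n_\omega}$ and $\abs{\determinante(D_x f^n_\omega)}$ of part (i) hold. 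The cocycle identity immediately gives $D_x f_0(\omega) V^{(i)}_{(\omega,x)} = V^{(i)}_{F(\omega,x)}$, since the defining growth rates are invariant under a single application of the cocycle.

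Second, to show that $r$, $\lambda^{(i)}$ and $m_i$ depend only on $x$, I would observe that each is obtained as a limit $\lim_{n\to\infty}$ of a quantity depending only on $f_0,\dots,f_{n-1}$ and $x$; once the limit exists, it is unchanged by modifying finitely many $f_k$, so it is measurable with respect to $\bigcap_k \sigma(f_k,f_{k+1},\dots)$ jointly with $x$. Since the $f_k$ are i.i.d.\ under $\nu^\N$, Kolmogorov's 0-1 law applied to the conditional distribution given $x$ forces these limits to equal $\nu^\N$-a.s.\ constants depending only on $x$. After intersecting $\Lambda_0$ with the corresponding full-measure set I obtain the $x$-only functions stated in the theorem, and the equivariance relations $r(f_0(\omega)x) = r(x)$ etc.\ then follow from the $F$-invariance of $\Lambda_0$ together with the second-coordinate-only dependence.

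Third, for part (ii), I would pick an ordered basis $\{\xi_1,\dots,\xi_d\}$ of $T_x \R^d$ adapted to the filtration, i.e.\ with each $\xi_i$ lying in $V^{(j)}_{(\omega,x)}\setminus V^{(j-1)}_{(\omega,x)}$ for the unique $j$ with $\rho^{(i)}(x)=\lambda^{(j)}(x)$. The subexponential decay of angles between $D_x f^n_\omega E_P$ and $D_x f^n_\omega E_Q$ is exactly the Lyapunov regularity output of Oseledets' theorem: it follows from comparing the norms of the wedge products $\xi_{i_1}\wedge\cdots\wedge\xi_{i_k}$ under the induced cocycle on $\bigwedge^k T_x\R^d$, whose top exponent is $\sum_j \rho^{(i_j)}(x)$; any exponential collapse of angles would make the wedge grow strictly slower than this sum, a contradiction. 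The main obstacle in executing the plan is making the tail-measurability step precise and verifying that the standard Euclidean formulations of Oseledets apply verbatim to the non-compact skew product over $\R^d$; once those two points are settled, the rest is a reformulation of the well-established MET, which is exactly the content of \citep[Theorem I.3.2]{Liu95}.
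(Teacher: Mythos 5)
The paper itself offers no proof of this theorem: it is quoted verbatim from \citep[Theorem I.3.2]{Liu95}, so your blind attempt has to be measured against the standard argument behind that reference. The first and third parts of your plan are sound: Oseledets' theorem applies to the cocycle $\Phi_n(\omega,x)=D_xf^n_\omega$ over the skew product $(\Omega^\N\times\R^d,F,\nu^\N\times\mu)$ without any compactness of the state space (only a standard probability space, $F$-invariance of $\nu^\N\times\mu$ and Assumption \ref{ass1} are needed), the equivariance $D_xf_0(\omega)V^{(i)}_{(\omega,x)}=V^{(i)}_{F(\omega,x)}$ follows from the cocycle relation, and part (ii) is the usual Lyapunov-regularity statement obtained from volume growth on exterior powers (noting that any basis with the stated growth rates is automatically adapted to the filtration).

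The genuine gap is your second step, where $r$, $\lambda^{(i)}$ and $m_i$ are upgraded from functions of $(\omega,x)$ to functions of $x$ alone. Your claim that the limits are ``unchanged by modifying finitely many $f_k$'' is false: if $\omega=(f_0,f_1,\dots)$ is replaced by $\omega'=(g_0,f_1,\dots)$, then $\lambda(\omega',x)=\lambda(\tau\omega,g_0x)$ while $\lambda(\omega,x)=\lambda(\tau\omega,f_0(\omega)x)$; the later derivatives are evaluated along a different orbit, and the two growth rates need not coincide. Hence for fixed $x$ the exponents are not tail-measurable in $\omega$, and Kolmogorov's $0$--$1$ law cannot be invoked. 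The correct mechanism, which is what \citep{Liu95} and \citep{Kifer86} use, is that the exponents are $F$-invariant, and for an i.i.d.\ random dynamical system every $F$-invariant measurable function $\psi(\omega,x)$ agrees $\nu^\N\times\mu$-a.e.\ with a function of $x$ alone. This is proved by a conditioning/martingale argument rather than tail triviality: setting $g(x):=\int\psi(\omega,x)\,\dx\nu^\N(\omega)$, invariance and independence give $\mathbf{E}\left[\psi\mid x,f_0,\dots,f_{n-1}\right]=g(f^n_\omega x)$, so $g$ is harmonic for the Markov transition operator; stationarity of the chain $x_n=f^n_\omega x$ forces $\mathbf{E}\left[(g(x_1)-g(x_0))^2\right]=0$, hence $g(x_n)=g(x)$ a.s., and martingale convergence yields $\psi=\lim_n g(x_n)=g(x)$ a.e. With this replacement (applied to the exponents, to $r$ and to the multiplicities) your outline reproduces the cited proof; as written, the $x$-only dependence --- which is precisely the point that distinguishes the random theorem from a bare application of Oseledets --- is not established.
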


For more details on the multiplicative ergodic theorem for random dynamical systems and Lyapunov exponents see for example \citep{Arnold98} or \citep[Section I.3]{Liu95}. In the theorem the angle between to linear subspaces $E$ and $E'$ of a tangent space $T_x\R^d$ for some $x \in \R^d$ is defined by
\begin{align*}
\gamma(E,E') := \inf\left\{\cos^{-1}\left(\sprod{\xi,\xi'}\right) : \xi \in E, \xi' \in E', \abs{\xi} = \abs{\xi'} = 1\right\},
\end{align*}
where $\sprod{\cdot,\cdot}$ denotes the Euclidean scalar product on $T_x\R^d$.

\subsection{Lyapunov Metric and Pesin Sets} \label{sec:lyapunovmetric}

In this section we will mainly follow the book of Liu and Qian \citep[Chapter III]{Liu95}. In general proofs are only given, if there is a need to change arguments due to the non-compactness of $\R^d$ as the state space of the random dynamical system. Otherwise we will state the reference for the proof.

Let us define for some interval $[a,b], a < b \leq 0$, of the real line the set
\begin{align*}
\Lambda_{a,b} := \left\{ (\omega,x) \in \Lambda_0 : \lambda^i(x) \notin [a,b] \text{ for all } i\in 1,\dots,r(x) \right\}.
\end{align*}
Because of $F\Lambda_0 \subset \Lambda_0$ and the invariance of the Lyapunov exponents we have $F\Lambda_{a,b} \subset \Lambda_{a,b}$. For $(\omega,x) \in \Lambda_{a,b}$ and $n\geq 1$ define the following linear subspaces
\begin{align*}
\sTS{0} &:= \bigcup_{\lambda^{(i)}(x) < a} V^{(i)}_{(\omega,x)},\qquad & \uTS{0} &:= \sTS{0}^\bot,\\
\sTS{n} &:= D_xf^n_\omega \sTS{0}, \qquad & \uTS{n} &:= D_xf^n_\omega \uTS{0}.
\end{align*}
For $n,l \geq 1$ let us denote the iterated functions by
\begin{align*}
f^0_n(\omega) := \id, \qquad f^l_n(\omega) = f_{n+l-1}(\omega) \circ \dots \circ f_n(\omega).
\end{align*}
And for $n,l \geq 1$ we define the derivative of $f^l_n(\omega)$ at $f^n_\omega x$ by $T^l_n(\omega,x) := D_{f^n_\omega x}f^l_n(\omega)$ and its restriction to $\sTS{n}$ and $\uTS{n}$ respectively by
\begin{align*}
S^l_n(\omega,x) := T^l_n(\omega,x)|_{\sTS{n}}, \qquad U^l_n(\omega,x) := T^l_n(\omega,x)|_{\uTS{n}}.
\end{align*}

Let us now fix some $k\geq 1$ and $0 < \eps \leq \min\{1,(b-a)/(200d)\}$ and let us assume that the set
\begin{align*}
\Lambda_{a,b,k} := \{(\omega,x) \in \Lambda_{a,b} : \dimension \sTS{0} = k\}
\end{align*}
is non-empty. Then we have the following lemma from \citep[Lemma III.1.1]{Liu95}.

\begin{lemma} \label{lem:ExistenceOfl}
There exists a measurable function $l: \Lambda_{a,b,k} \times \N \to (0,+\infty)$ such that for each $(\omega,x) \in \Lambda_{a,b,k}$ and $n,l \geq 1$ we have
\begin{enumerate}
\item $\abs{S^l_n(\omega,x)\xi} \leq l(\omega,x,n) e^{(a+\eps)l}\abs{\xi}$, for all $\xi \in \sTS{n}$;
\item $\abs{U^l_n(\omega,x)\eta} \geq l(\omega,x,n)^{-1} e^{(b-\eps)l}\abs{\eta}$, for all $\eta \in \uTS{n}$;
\item $\gamma(\sTS{n+l},\uTS{n+l}) \geq l(\omega,x,n)^{-1}e^{-\eps l}$;
\item $l(\omega,x,n+l) \leq l(\omega,x,n) e^{\eps l}$,
\end{enumerate}
where $\gamma(\cdot,\cdot)$ again denotes the angle between two linear subspaces.
\end{lemma}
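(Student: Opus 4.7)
The approach I would take is the standard Pesin construction of a ``non-uniform hyperbolicity function'' adapted to the skew product $F$: bundle the three required estimates into the smallest constant that works for every $l$, and then enforce property~(iv) by an extra supremum over the future.

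First, for any $(\omega,x) \in \Lambda_{a,b,k}$, Theorem \ref{thm:met} applied along the orbit through $F^n(\omega,x)$ gives that $\tfrac{1}{l}\log\abs{S^l_n(\omega,x)}$ converges to the largest stable Lyapunov exponent (which is $<a$), that $\tfrac{1}{l}\log\inf_{\abs{\eta}=1}\abs{U^l_n(\omega,x)\eta}$ converges to the smallest unstable exponent (which is $>b$), and that $\tfrac{1}{l}\log\gamma(\sTS{n+l},\uTS{n+l}) \to 0$. Consequently the three measurable functions
\begin{align*}
A_0(\omega,x) &:= \sup_{l\geq 0}\abs{S^l_0(\omega,x)}\,e^{-(a+\eps)l}, \\
B_0(\omega,x) &:= \sup_{l\geq 0}\abs{(U^l_0(\omega,x))^{-1}}\,e^{(b-\eps)l}, \\
C_0(\omega,x) &:= \sup_{l\geq 0}\gamma(\sTS{l},\uTS{l})^{-1}\, e^{-\eps l}
\end{align*}
are finite on a full-measure subset of $\Lambda_{a,b,k}$. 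Setting $g:=\max\{A_0,B_0,C_0\}$, the cocycle identities $S^l_n = S^l_0\circ F^n$, $U^l_n = U^l_0\circ F^n$ and $\sTS{n+l}=\sTS{l}\circ F^n$ (which follow from $f^l_n(\omega) = f^l_0(\tau^n\omega)$ and the $F$-equivariance $D_x f^n_\omega V^{(i)}_{(\omega,x)} = V^{(i)}_{F^n(\omega,x)}$ of the Oseledec splitting) identify $g\circ F^n$ with the corresponding maximum $\max\{A_n,B_n,C_n\}$ defined at time $n$.

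I then set
\begin{align*}
l(\omega,x,n) := \sup_{m\geq 0}\ g\bigl(F^{n+m}(\omega,x)\bigr)\,e^{-\eps m}.
\end{align*}
Property (iv) is immediate by reindexing with $m'=m+l$:
\begin{align*}
l(\omega,x,n+l) = \sup_{m\geq 0} g(F^{n+l+m})e^{-\eps m} = e^{\eps l}\sup_{m'\geq l} g(F^{n+m'})e^{-\eps m'} \leq e^{\eps l}\,l(\omega,x,n).
\end{align*}
Since $l(\omega,x,n)\geq g(F^n(\omega,x))\geq\max\{A_n,B_n,C_n\}$, properties (i)--(iii) drop out of the defining inequalities for $A_n,B_n,C_n$, and measurability of $l$ is automatic as a countable supremum of measurable functions.

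The single remaining point---and what I expect to be the main obstacle in the non-compact setting---is to verify that $l(\omega,x,n)$ is a.s.\ finite, equivalently that $g(F^n(\omega,x))e^{-\eps n}$ stays bounded along almost every orbit. In the compact case of \citep[III.1.1]{Liu95} this is trivial from uniform bounds; here I would combine the integrability Assumptions \ref{ass1} and \ref{ass1b} on the one-step differentials with a Birkhoff-type temperedness argument (in the spirit of the standard tempering lemma for log-integrable functions under a measure-preserving transformation) to conclude that $\log^+ g(F^n(\omega,x)) = o(n)$ a.s. Discarding the resulting $F$-invariant null set from $\Lambda_{a,b,k}$ then yields the lemma on the remainder.
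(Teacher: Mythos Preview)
Your overall architecture---package the three estimates into a single function and then temper over the future orbit to enforce (iv)---is the right one, and it is essentially how the proof in \citep[III.1.1]{Liu95} that the paper cites is organised. There is, however, a genuine gap in your reduction step.

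The claimed cocycle identity $U^l_n(\omega,x)=U^l_0(F^n(\omega,x))$ is false in this one-sided setting. By definition $H_0(\omega,x)=E_0(\omega,x)^\perp$ and $H_n(\omega,x)=D_xf^n_\omega H_0(\omega,x)$, whereas $H_0(F^n(\omega,x))=E_0(F^n(\omega,x))^\perp=E_n(\omega,x)^\perp$; since $D_xf^n_\omega$ does not preserve orthogonality, in general $H_n(\omega,x)\neq H_0(F^n(\omega,x))$. Thus $U^l_n(\omega,x)$ and $U^l_0(F^n(\omega,x))$ are restrictions of the same linear map $T^l_0(F^n(\omega,x))$ to \emph{different} subspaces, and for the same reason $\gamma(E_{n+l},H_{n+l})(\omega,x)\neq\gamma(E_l,H_l)(F^n(\omega,x))$. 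Your identification $g\circ F^n=\max\{A_n,B_n,C_n\}$ therefore fails for the $B$- and $C$-terms, so neither (ii) nor (iii) follows from the bound $l(\omega,x,n)\geq g(F^n(\omega,x))$. (The identity \emph{is} correct for $S^l_n$ and for $E_{n+l}$, because the stable filtration $V^{(i)}$ is $F$-equivariant by Theorem~\ref{thm:met}; the point is that $H_0$ is an ad hoc orthogonal complement, not part of an invariant splitting, which is exactly the one-sided phenomenon.)

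The fix, as in \citep[III.1.1]{Liu95}, is to abandon the shortcut through $F^n$ and work directly along the fixed orbit of $(\omega,x)$. One chooses a basis $\{\xi_1,\dots,\xi_d\}$ of $T_x\R^d$ with $\xi_1,\dots,\xi_k$ adapted to the flag inside $E_0(\omega,x)$ and $\xi_{k+1},\dots,\xi_d\in H_0(\omega,x)$ adapted to the flag $\{V^{(j)}_{(\omega,x)}\cap H_0(\omega,x)\}_{j>i_0}$; then every nonzero $\eta\in H_0$ has Lyapunov exponent $>b$, and part (ii) of Theorem~\ref{thm:met} applied to $P=\{1,\dots,k\}$, $Q=\{k+1,\dots,d\}$ gives $\tfrac{1}{n}\log\gamma(E_n,H_n)\to 0$ directly. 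From these one builds $l(\omega,x,n)$ as a function of the whole orbit of $(\omega,x)$, not merely of $F^n(\omega,x)$, and both its finiteness and its subexponential growth in $n$ come out of Theorem~\ref{thm:met} itself---the integrability Assumptions~\ref{ass1} and~\ref{ass1b} that you invoke are not needed here (they enter only later, in Lemmas~\ref{lem:ExistenceOfr} and~\ref{lem:DerivativeEstimate}).
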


\begin{proof}
See \citep[Proof of Lemma III.1.1]{Liu95}.
\end{proof}

Let us fix a number $l' \geq 1$ such that the set
\begin{align*}
\Lambda^{l'}_{a,b,k,\eps}:= \left\{(\omega,x) \in \Lambda_{a,b,k} : l(\omega,x,0) \leq l'\right\}
\end{align*}
is non-empty. This family of sets, on which we have uniform bounds on the derivative by Lemma \ref{lem:ExistenceOfl}, is often called {\it Pesin sets}. We even can show some continuity of the subspaces $\sTS{0}$ and $\uTS{0}$ on these sets, which is \citep[Lemma III.1.2]{Liu95}.

\begin{lemma} \label{lem:ContinuousDependeceOfStableSpaces}
The linear subspaces $\sTS{0}$ and $\uTS{0}$ depend continuously on $(\omega,x) \in \Lambda^{l'}_{a,b,k,\eps}$.
\end{lemma}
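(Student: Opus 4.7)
The plan is a standard limit-extraction argument on the Grassmannian, using the uniform estimates that the Pesin set affords. Let $(\omega_m,x_m)$ be a sequence in $\Lambda^{l'}_{a,b,k,\eps}$ converging to $(\omega,x) \in \Lambda^{l'}_{a,b,k,\eps}$. Since $\dimension \sTS{0} = k$ is constant on $\Lambda_{a,b,k}$, each $\sTS[\omega_m,x_m]{0}$ is a point in the Grassmannian $G_k(\R^d)$, which is compact. Hence, after passing to a subsequence, $\sTS[\omega_m,x_m]{0}$ converges to some $k$-dimensional subspace $E \subseteq T_x\R^d$, and by orthogonality its complement $\uTS[\omega_m,x_m]{0}$ converges to $E^\bot$.

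The key step is to identify $E$ with $\sTS{0}$. Fix $\xi \in E$ with $\abs{\xi}=1$ and choose $\xi_m \in \sTS[\omega_m,x_m]{0}$ with $\xi_m \to \xi$. Lemma \ref{lem:ExistenceOfl}(i) applied at $n=0$, together with $l(\omega_m,x_m,0) \leq l'$, gives
\begin{align*}
\abs{D_{x_m} f^l_{\omega_m} \xi_m} \leq l' e^{(a+\eps)l} \abs{\xi_m}
\end{align*}
for every $l\geq 1$. Because the topology on $\Omega$ is uniform convergence on compact sets for all derivatives up to order $2$, the map $(\omega,y) \mapsto D_y f_0(\omega)$ is jointly continuous, and by induction on $l$ (chain rule) so is $(\omega,y)\mapsto D_y f^l_\omega$ for each fixed $l$. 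Passing to the limit $m\to\infty$ yields $\abs{D_x f^l_\omega \xi} \leq l' e^{(a+\eps)l}$ for every $l\geq 1$, hence
\begin{align*}
\limsup_{l\to+\infty} \tfrac{1}{l} \log \abs{D_x f^l_\omega \xi} \leq a+\eps < b.
\end{align*}
By Theorem \ref{thm:met} and the fact that $(\omega,x) \in \Lambda_{a,b}$ (so no Lyapunov exponent lies in $[a,b]$), the only vectors whose exponent is strictly below $b$ are those in $\sTS{0}$. Therefore $\xi \in \sTS{0}$, and since $\xi \in E$ was arbitrary we obtain $E \subseteq \sTS{0}$. A dimension count forces $E = \sTS{0}$, and then $E^\bot = \uTS{0}$.

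Finally, the limit $E$ did not depend on the extracted subsequence: by the argument above, every subsequence of $\sTS[\omega_m,x_m]{0}$ admits a further subsequence converging to $\sTS{0}$, so the whole sequence converges to $\sTS{0}$ in $G_k(\R^d)$. Continuity of orthogonal complementation in the Grassmannian gives the corresponding statement for $\uTS{0}$. The main obstacle in this outline is the continuity of the derivative map $(\omega,y) \mapsto D_y f^l_\omega$ in the topology of uniform convergence on compacta; this is handled at the fixed, finite level $l$ used above, so no uniformity in $l$ is required and the argument goes through.
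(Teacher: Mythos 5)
Your proof is correct and follows essentially the same route as the paper: extract a convergent subsequence in the compact Grassmannian, pass the uniform bound $\abs{D_{x_m}f^l_{\omega_m}\xi_m}\leq l'e^{(a+\eps)l}\abs{\xi_m}$ to the limit using continuity of the finitely many compositions defining $f^l_\omega$, and identify the limit space with $\sTS{0}$. The only (harmless) divergence is in the final identification: the paper concludes $\zeta\in\sTS{0}$ from the estimates of Lemma \ref{lem:ExistenceOfl} at the limit point, whereas you conclude it from Theorem \ref{thm:met} via the Lyapunov exponent bound $a+\eps<b$; both are valid, and your explicit dimension count, subsequence argument, and continuity of orthogonal complementation fill in details the paper leaves implicit.
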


\begin{proof}
Although this is \citep[Lemma III.1.2]{Liu95}, we will say a few words concerning the topology on $\Omega^\N$. As mentioned in the beginning of this section the topology on $\Omega = \Diff^2(\R^d)$ will be the one induced by uniform convergence on compact sets for all derivatives up to order 2 (see \citep[Chapter 4]{Kunita90}). Thus on $\Omega^\N$ we can use the usual topology of uniform convergence on finitely many elements. The space of all $k$-dimensional subspaces of $T_x\R^d \cong \R^d$ will be equipped with the Grasmannian metric, by which this space is compact.

Let $(\omega_n,x_n) \in \Lambda^{l'}_{a,b,k,\eps}$ be a sequence converging to $(\omega,x) \in \Lambda^{l'}_{a,b,k,\eps}$. By compactness of the Grassmanian there exists a subsequence of $\{(\omega_n,x_n)\}_n$ (denoted by the same symbols) such that $E_0(\omega_n,x_n)$ converges to some linear subspace $E$. Clearly $E$ is a subspace of $T_x\R^d$. For each $\zeta \in E$ there is a sequence $\xi_n \in E_0(\omega_n,x_n)$ such that $\abs{\zeta -\xi_n} \to 0$. Because for $n \in \N$ we have by Lemma \ref{lem:ExistenceOfl} that
\begin{align*}
\abs{T^l_0(\omega_n,x_n)\xi_n} = \abs{S^l_0(\omega_n,x_n)\xi_n} \leq l' e^{(a+\eps)l}\abs{\xi_n} \to l' e^{(a+\eps)l}\abs{\zeta}
\end{align*}
we only need to show that the left hand side converges to $\abs{T^l_0(\omega,x)\zeta}$. Since $\{\xi_n\}_{n\in\N} \cup \{\zeta\}$ is a compact set in $\R^d$ and the derivatives of each component of $\omega_n$ converge uniformly on compact sets we get for all $\zeta \in E$
\begin{align*}
\abs{T^l_0(\omega,x)\zeta} \leq l' e^{(a+\eps)l}\abs{\zeta}.
\end{align*}
Then Lemma \ref{lem:ExistenceOfl} implies that actually $\zeta \in E(\omega,x)$, which completes the proof.
\end{proof}

For $(\omega,x) \in \Lambda^{l'}_{a,b,k,\eps}$ and $n \in \N$ Lemma \ref{lem:ExistenceOfl} also allows us to define an inner product $\Linner{~}{~}$ on $T_{f^n_\omega x}\R^d$ such that
\begin{align*}
\Linner{\xi}{\xi'} &= \sum_{l=0}^{+\infty} e^{-2(a + 2\eps)l} \Big\langle S^l_n(\omega,x)\xi, S^l_n(\omega,x)\xi'\Big\rangle, && \text{for } \xi,\xi' \in \sTS{n}\\
\Linner{\eta}{\eta'} &= \sum_{l=0}^{n} e^{2(b - 2\eps)l} \left\langle \left[U^l_{n-l}(\omega,x)\right]^{-1}\eta, \left[U^l_{n-l}(\omega,x)\right]^{-1}\eta'\right\rangle, && \text{for } \eta,\eta' \in \uTS{n}.\\
\end{align*}
and $\sTS{n}$ and $\uTS{n}$ are orthogonal with respect to $\Linner{~}{~}$. Thus we can define the norms
\begin{align*}
\Lnorm{\xi} &:= \left[\Linner{\xi}{\xi}\right]^{\frac{1}{2}}& &\text{for } \xi \in \sTS{n};\\
\Lnorm{\eta} &:= \left[\Linner{\eta}{\eta}\right]^{\frac{1}{2}}& &\text{for } \eta \in \uTS{n};\\
\Lnorm{\zeta} &:= \max\left\{\Lnorm{\xi} ,\Lnorm{\eta}\right\}& &\text{for } \zeta = \xi + \eta \in \sTS{n} \oplus \uTS{n}.
\end{align*}

The sequence of norms $\{\Lnorm{\cdot}\}_{n\in \N}$ is usually called {\it Lyapunov metric} at the point $(\omega,x)$. By the definition of the inner product and by Lemma \ref{lem:ContinuousDependeceOfStableSpaces} the inner product $\Linner{~}{~}$ depends continuously on $(\omega,x) \in \Lambda^{l'}_{a,b,k,\eps}$. Now we can state \citep[Lemma III.1.3]{Liu95}.

\begin{lemma} \label{lem:EstimatesOnLnorm}
Let $(\omega,x) \in \Lambda^{l'}_{a,b,k,\eps}$. Then the Lyapunov metric at $(\omega,x)$ satisfies for each $n\in \N$
\begin{enumerate}
\item $\Lnorm[n+1]{S^1_{n}(\omega,x)\xi} \leq e^{a+2\eps} \Lnorm{\xi}\quad$ for $\xi \in \sTS{n}$;
\item $\Lnorm[n+1]{U^1_{n}(\omega,x)\eta} \geq e^{b-2\eps} \Lnorm{\eta}\quad$ for $\eta \in \uTS{n}$;
\item $\frac{1}{2} \abs{\zeta} \leq \Lnorm{\zeta} \leq Ae^{2\eps n}\abs{\zeta}\quad$ for $\zeta \in T_{f^n_\omega x}\R^d$, where $A = 4(l')^2(1-e^{-2\eps})^{-\frac{1}{2}}$.
\end{enumerate}
\end{lemma}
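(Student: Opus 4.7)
My approach is to treat the three estimates separately: (i) and (ii) follow from direct algebraic manipulation of the defining series, while (iii) combines Lemma \ref{lem:ExistenceOfl} with the angle bound to convert the Lyapunov norm into the Euclidean norm.

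\textbf{For (i)}, the key observation is the cocycle identity $S^{l+1}_n(\omega,x) = S^l_{n+1}(\omega,x) \circ S^1_n(\omega,x)$, which follows from $f^{l+1}_n(\omega) = f^l_{n+1}(\omega) \circ f_n(\omega)$ together with $D_{f^n_\omega x}f_n(\omega)\sTS{n} = \sTS{n+1}$. Substituting into the series defining $\Lnorm[n+1]{S^1_n(\omega,x)\xi}^2$ and reindexing $l \mapsto l-1$ gives
\begin{align*}
\Lnorm[n+1]{S^1_n(\omega,x)\xi}^2 = e^{2(a+2\eps)}\sum_{l=1}^{\infty} e^{-2(a+2\eps)l}\abs{S^l_n(\omega,x)\xi}^2 \leq e^{2(a+2\eps)}\Lnorm{\xi}^2.
\end{align*}

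\textbf{For (ii)}, the analogous identity $U^l_{n+1-l}(\omega,x) = U^1_n(\omega,x) \circ U^{l-1}_{n+1-l}(\omega,x)$ for $l \geq 1$ gives $[U^l_{n+1-l}]^{-1} U^1_n = [U^{l-1}_{n+1-l}]^{-1}$ on $\uTS{n+1-l}$. Separating the $l=0$ term of the sum for $\Lnorm[n+1]{U^1_n(\omega,x)\eta}^2$ and reindexing the remaining finite sum yields
\begin{align*}
\Lnorm[n+1]{U^1_n(\omega,x)\eta}^2 = \abs{U^1_n(\omega,x)\eta}^2 + e^{2(b-2\eps)}\Lnorm{\eta}^2 \geq e^{2(b-2\eps)}\Lnorm{\eta}^2.
\end{align*}

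\textbf{For (iii)}, the lower bound is immediate from the $l=0$ term in each series, which gives $\Lnorm{\xi} \geq \abs{\xi}$ and $\Lnorm{\eta} \geq \abs{\eta}$; hence $\Lnorm{\zeta} = \max\{\Lnorm{\xi},\Lnorm{\eta}\} \geq \tfrac12(\abs{\xi}+\abs{\eta}) \geq \tfrac12\abs{\zeta}$. For the upper bound, I would estimate each summand of $\Lnorm{\xi}^2$ using Lemma \ref{lem:ExistenceOfl}(i) combined with (iv) applied to $l(\omega,x,n) \leq l'e^{\eps n}$, obtaining $\abs{S^l_n\xi}^2 \leq (l')^2 e^{2\eps n}e^{2(a+\eps)l}\abs{\xi}^2$; the resulting geometric series sums to give $\Lnorm{\xi} \leq l'e^{\eps n}(1-e^{-2\eps})^{-1/2}\abs{\xi}$, and an entirely parallel computation using (ii) and (iv) yields the same bound for $\Lnorm{\eta}$ in terms of $\abs{\eta}$. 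It remains to bound $\max\{\abs{\xi},\abs{\eta}\}$ by $\abs{\zeta}$: Lemma \ref{lem:ExistenceOfl}(iii), applied with $n_0 = 0$ and shift $n$, gives $\gamma(\sTS{n},\uTS{n}) \geq (l')^{-1}e^{-\eps n}$, so the skew projections along the splitting $\sTS{n} \oplus \uTS{n}$ have operator norm at most $1/\sin\gamma$, and using $\sin\gamma \geq (2/\pi)\gamma$ I obtain $\max\{\abs{\xi},\abs{\eta}\} \leq (\pi/2)\,l'e^{\eps n}\abs{\zeta}$. Multiplying these two bounds gives $\Lnorm{\zeta} \leq A\,e^{2\eps n}\abs{\zeta}$ with a constant of the form stated.

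The proofs of (i) and (ii) are essentially bookkeeping, so the only technical point is tracking the constants in step (iii), in particular the interplay between the geometric-series bound, the exponential loss $e^{\eps n}$ from Lemma \ref{lem:ExistenceOfl}(iv), and the angle estimate of Lemma \ref{lem:ExistenceOfl}(iii); the slack in the constant $4$ in $A = 4(l')^2(1-e^{-2\eps})^{-1/2}$ absorbs factors such as $\pi/2$ comfortably.
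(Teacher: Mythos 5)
Your proof is correct. The paper itself does not give a proof of this lemma (it defers to Liu--Qian, Lemma III.1.3), and your argument is exactly the standard one: the cocycle reindexing of the defining series for (i) and (ii), the $l=0$ terms together with the triangle inequality for the lower bound in (iii), and, for the upper bound, the geometric series estimate combined with $l(\omega,x,n)\le l'e^{\eps n}$ and the angle bound $\gamma\bigl(E_n(\omega,x),H_n(\omega,x)\bigr)\ge (l')^{-1}e^{-\eps n}$, whose resulting constant $\tfrac{\pi}{2}(l')^{2}(1-e^{-2\eps})^{-1/2}$ is indeed absorbed by $A=4(l')^{2}(1-e^{-2\eps})^{-1/2}$.
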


To the end of this section we will prove the following important lemma, which is basically \citep[Lemma III.1.4]{Liu95}. The proof is similar to the one of \citep[Lemma III.1.4]{Liu95} but has to be adapted to the situation of a non-compact state space, here Assumption \ref{ass2} plays an important role. We will use $\Lip(\cdot)$ to denote the Lipschitz constant of a function with respect to the norm $\abs{\cdot}$ if not mentioned otherwise.

\begin{lemma} \label{lem:ExistenceOfr}
 There exists a Borel set $\Gamma_0 \subset \Omega^\N \times \R^d$ and a measurable function $r: \Gamma_0 \to (0, \infty)$ such that $\nu^\N \times \mu (\Gamma_0) = 1$, $F\Gamma_0 \subset \Gamma_0$ and for all $(\omega, x) \in \Gamma_0$ 
\begin{enumerate}
 \item the map 
\begin{align*}
 F_{(\omega,x),0} := \exp_{f_0(\omega)x}^{-1} \circ f_0(\omega) \circ \exp_x : T_x\R^d \ni B_x(0,1) \to T_{f_0(\omega)x}\R^d,
\end{align*}
where $B_x(0,1)$ denotes the unit ball in $T_x\R^d$ around $0$, satisfies
\begin{align*}
 \Lip(D_{\cdot}F_{(\omega,x),0}) &\leq r(\omega,x),\\
 \Lip(D_{F_{(\omega,x),0}(\cdot)}F_{(\omega,x),0}^{-1}) &\leq r(\omega,x);
\end{align*}
\item $r(F^n(\omega,x)) = r(\tau^n\omega,f^n_\omega x) \leq r(\omega, x) e^{\eps n}$.
\end{enumerate}
\end{lemma}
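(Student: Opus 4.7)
The plan is to extract $r$ from Assumption \ref{ass2} via a Birkhoff-type tempering argument, since the compactness used in \citep[Lemma III.1.4]{Liu95} is no longer available. First I would define, for each $(\omega,x)$, the candidate local bound
\begin{align*}
\tilde r(\omega,x) := \max\!\left\{1,\;\sup_{\xi \in B_x(0,1)}\abs{D^2_\xi F_{(\omega,x),0}},\;\sup_{\xi \in B_x(0,1)}\abs{D^2_{F_{(\omega,x),0}(\xi)} F^{-1}_{(\omega,x),0}}\right\}.
\end{align*}
Assumption \ref{ass2} says precisely that $\log\tilde r \in \mathcal{L}^1(\nu^\N\times\mu)$, and by the mean value theorem on the convex ball $B_x(0,1)$, a bound on $\abs{D^2}$ gives a bound on the Lipschitz constant of $D$, so
\begin{align*}
 \Lip(D_{\cdot}F_{(\omega,x),0}) \leq \tilde r(\omega,x), \qquad \Lip(D_{F_{(\omega,x),0}(\cdot)}F_{(\omega,x),0}^{-1}) \leq \tilde r(\omega,x).
\end{align*}

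Next I would apply Birkhoff's ergodic theorem to the integrable function $\log\tilde r$ under the measure-preserving map $F$ (using the lemma that $\nu^\N\times\mu$ is $F$-invariant). Writing $S_n := \sum_{k=0}^{n-1}\log\tilde r\circ F^k$, Birkhoff yields almost-sure convergence of $S_n/n$ to an integrable limit, and the standard consequence $\log\tilde r(F^n(\omega,x))/n = S_{n+1}/n - S_n/n \to 0$ then holds on a Borel set $\Gamma_0 \subset \Omega^\N \times \R^d$ of full $\nu^\N\times\mu$-measure. I would shrink $\Gamma_0$ further (without losing full measure) to ensure $F\Gamma_0 \subset \Gamma_0$ by intersecting with all its forward preimages, which is automatic since the defining property ``$\log\tilde r(F^n(\cdot))/n \to 0$'' is manifestly forward invariant.

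On $\Gamma_0$ the choice
\begin{align*}
 r(\omega,x) := \sup_{n \geq 0} \tilde r(F^n(\omega,x))\,e^{-\eps n}
\end{align*}
is finite, since the summands tend to $0$; it is measurable as a countable supremum of measurable functions and satisfies $r(\omega,x) \geq \tilde r(\omega,x)$ by taking $n=0$, giving the required Lipschitz estimates in (i). The tempering property (ii) follows from a shift of index:
\begin{align*}
 r(F^n(\omega,x)) = \sup_{m \geq 0} \tilde r(F^{n+m}(\omega,x))\,e^{-\eps m} = e^{\eps n}\sup_{m \geq 0} \tilde r(F^{n+m}(\omega,x))\,e^{-\eps(n+m)} \leq e^{\eps n} r(\omega,x).
\end{align*}

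The main obstacle, compared to the compact setting of Liu--Qian, is precisely the absence of a global uniform bound on $\tilde r$; this is why Assumption \ref{ass2} is formulated as an integrability (not boundedness) condition, and why the tempering construction above replaces the naive ``take the supremum'' approach. A secondary subtlety is verifying measurability of the supremum defining $\tilde r$: on the unit ball $B_x(0,1)$, continuity of $D^2 F_{(\omega,x),0}$ in $\xi$ permits replacing the supremum over the ball by a supremum over a countable dense set, which together with the measurable dependence of $\xi \mapsto D^2_\xi F_{(\omega,x),0}$ on $(\omega,x)$ (inherited from the $C^2$-topology on $\Omega$ used in Section \ref{sec:rds}) gives the Borel measurability of $\tilde r$, and hence of $r$ on $\Gamma_0$.
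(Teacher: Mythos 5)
Your proposal is correct and follows essentially the same route as the paper's own proof: bound the Lipschitz constants by the second-derivative suprema from Assumption \ref{ass2}, apply Birkhoff's ergodic theorem to get $\frac{1}{n}\log r'(F^n(\omega,x)) \to 0$ on a full-measure forward-invariant set, and define $r(\omega,x) = \sup_{n\geq 0} r'(F^n(\omega,x))e^{-\eps n}$, whose finiteness and index-shift tempering give (i) and (ii). Your added remarks on measurability and forward invariance only make explicit what the paper leaves implicit.
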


\begin{proof}
Let us define the function $r': \Omega^\N \times \R^d$ by
\begin{align*}
r'(\omega,x) := &\max\left\{ \sup_{\xi \in B_x(0,1)} \abs{D^2_\xi F_{(\omega,x),0}}; \sup_{\xi \in B_{x}(0,1)}\abs{D^2_{F_{(\omega,x),0}(\xi)} F^{-1}_{(\omega,x),0}} \right\},
\end{align*}
where $D^2$ is the second derivative operator. Then by Assumption \ref{ass2} we have $\log(r') \in \mathcal{L}^1(\nu^{\N}\times\mu)$. According to Birkhoff's ergodic theorem there exists a measurable set $\Gamma_0 \subseteq \Omega^\N \times \R^d$ with $\nu^{\N}\times\mu(\Gamma_0) = 1$ and $F\Gamma_0 \subseteq \Gamma_0$ such that for all $(\omega,x) \in \Gamma_0$ we have
\begin{align*}
\lim_{n\to \infty} \frac{1}{n} \log\left(r'(F^n(\omega,x))\right) = 0.
\end{align*}
Thus it follows that
\begin{align*}
r(\omega,x) := \sup_{n \geq 0}\left\{r'(F^n(\omega,x)) e^{-\eps n} \right\}
\end{align*}
is finite at each point $(\omega,x) \in \Gamma_0$ and $r$ satisfies the requirements of the lemma by the mean value theorem.
\end{proof}

\subsection{Local Stable Manifolds}

Now chose a number $r' \geq 1$ such that the Borel set
\begin{align*}
\Lambda^{l',r'}_{a,b,k,\eps} := \left\{(\omega,x) \in \Lambda^{l'}_{a,b,k,\eps} \cap \Gamma_0 : r(\omega,x)\leq r'\right\}
\end{align*}
is non-empty. For ease of notation we will abbreviate $\Lambda' :=  \Lambda^{l',r'}_{a,b,k,\eps}$ for fixed parameters. Let us introduce the notion of {\it local stable manifolds} as in \citep[Section III.3]{Liu95}. By $\Emb^1(B^k,\R^d)$ we will denote the set of continuously differentiable embeddings from the open unit ball in $\R^k$, i.e. $B^k := := \{\xi \in \R^k: \abs{\xi} < 1\}$ into $\R^d$. The set of embeddings is equipped with the unifrom convergence on compact sets for all derivatives up to order one.

\begin{definition}
Let $X$ be a metric space and let $\{D_x\}_{x\in X}$ be a collection of subsets of $\R^d$. We call $\{D_x\}_{x \in X}$ a continuous family of $C^1$ embedded $k$-dimensional discs in $\R^d$ if there is a finite open cover $\{U_i\}_{i=1,\dots,l}$ of $X$ such that for each $U_i$ there exists a continuous map $\theta_i: U_i \to \Emb^1(B^k,\R^d)$ such that $\theta_i(x)B^k = D_x$, $x \in U_i$.
\end{definition}

Let us now state the main theorem on the existence of local stable manifolds \citep[Theorem III.3.1]{Liu95}.

\begin{theorem} \label{thm:localStableManifold}
For each $n \in \N$ there exists a continuous familiy of $C^1$ embedded $k$-di\-men\-sio\-nal discs $\{W_n(\omega,x)\}_{(\omega,x)\in \Lambda'}$ in $\R^d$  and there exist numbers $\alpha_n, \beta_n$ and $\gamma_n$ which depend only on $a, b, k, \eps, l'$ and $r'$ such that the following hold true for every $(\omega,x) \in \Lambda'$:
\begin{enumerate}
\item There exists a $C^{1,1}$ map
  \begin{align*}
  h_{(\omega,x),n} : O_n(\omega,x) \to H_n(\omega,x),
  \end{align*}
  where $O_n(\omega,x)$  is an open subset of $E_n(\omega,x)$ which contains $\{\xi \in E_n(\omega,x) : \abs{\xi} \leq \alpha_n\}$, such that
\begin{enumerate}
  \item $h_{(\omega,x),n} (0) = 0$;
  \item $\Lip(h_{(\omega,x),n}) \leq \beta_n, \Lip(D_\cdot h_{(\omega,x),n}) \leq \beta_n$;
  \item $W_n(\omega,x) = \exp_{f^n_\omega x} \graph(h_{(\omega,x),n})$ and $W_n(\omega,x)$ is tangent to $E_n(\omega,x)$ at the point $f^n_\omega x$;
\end{enumerate}
\item $f_n(\omega) W_n(\omega,x) \subseteq W_{n+1}(\omega,x)$
\item $d^s(f^l_n(\omega)y,f^l_n(\omega)z) \leq \gamma_n e^{(a+4\eps)l} d^s(y,z)$ for $y,z \in W_n(\omega,x)$, $l\in \N$, where $d^s(\cdot, \cdot)$ is the distance along $W_m(\omega,x)$ for $m \in \N$;
\item $\alpha_{n+1} = \alpha_n e^{-5\eps}, \beta_{n+1} = \beta_n e^{7\eps}$ and $\gamma_{n+1} = \gamma_n e^{2\eps}$.
\end{enumerate}
\end{theorem}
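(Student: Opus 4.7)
The strategy is the classical Hadamard--Perron construction in Lyapunov coordinates along the orbit $\{f^n_\omega x\}_{n\geq 0}$, adapted to the fact that the bounds $l'$ and $r'$ hold only on the Pesin set $\Lambda'$ while $l(\omega,x,n)$ and $r(F^n(\omega,x))$ may grow like $e^{\eps n}$. The plan is to work in $T_{f^n_\omega x}\R^d$ with the centered maps $F_{(\omega,x),n}$, to split $F_{(\omega,x),n} = T^1_n(\omega,x) + G_{(\omega,x),n}$ with $G_{(\omega,x),n}(0) = 0$ and $DG_{(\omega,x),n}(0) = 0$, and to observe via Lemma \ref{lem:ExistenceOfr} and the mean value theorem that both $\Lip(G_{(\omega,x),n})$ and $\Lip(DG_{(\omega,x),n})$ on $B_{f^n_\omega x}(0,1)$ are controlled by $r(F^n(\omega,x))\leq r' e^{\eps n}$. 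Restricting to the Lyapunov ball of radius $\alpha_n \sim e^{-5\eps n}$ and passing to the Lyapunov norm via Lemma \ref{lem:EstimatesOnLnorm}(iii) makes these Lipschitz constants small uniformly in $n$ compared to the spectral gap $b-a$.

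I would first construct $h_{(\omega,x),0}$ by a Perron-type fixed point argument. Consider the Banach space of sequences $(\xi_k)_{k\geq 0}$ with $\xi_k\in T_{f^k_\omega x}\R^d$ and $\sup_k e^{-(a+3\eps)k}\Lnorm[k]{\xi_k}<\infty$, and the Lyapunov--Perron operator $\mathcal{T}$ that uses forward summation in the stable direction $E_k$ (with contraction factor $e^{a+2\eps}$ from Lemma \ref{lem:EstimatesOnLnorm}(i)) and backward summation in the complementary direction $H_k$ (with expansion $e^{b-2\eps}$ from part (ii)). On the Lyapunov-$\alpha_0$-ball, $\mathcal{T}$ is a contraction, and the fixed points parametrized by their stable part $\xi_0^s\in E_0(\omega,x)$ define the Lipschitz map $\xi_0^s\mapsto \xi_0^u =: h_{(\omega,x),0}(\xi_0^s)$. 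Performing the construction from any time $n$ yields the whole family $h_{(\omega,x),n}$; invariance (ii) is automatic and (iii) follows from the stable rate of Lemma \ref{lem:EstimatesOnLnorm}(i) combined with the norm comparison in part (iii), which accounts for the extra $2\eps$ per step hidden in the exponent $a+4\eps$. The $C^{1,1}$ regularity in (i)(b) is obtained by a second contraction, this time on Lipschitz sections of the Grassmannian bundle of $k$-planes near $E_k$ along the stable disc, using the smallness of $\Lip(DG_{(\omega,x),k})$ from Lemma \ref{lem:ExistenceOfr}. This is precisely where Assumption \ref{ass2} replaces the uniform $C^2$-bounds used in the compact-manifold case.

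Continuous dependence of $W_n(\omega,x)$ on $(\omega,x)\in\Lambda'$ follows by combining Lemma \ref{lem:ContinuousDependeceOfStableSpaces} and the continuity of the Lyapunov inner product with the fact that $f_n(\omega)$ depends continuously on $\omega$ uniformly on compact sets, so the data of the contraction $\mathcal{T}$ vary continuously with $(\omega,x)$ on every finite time window. The parameter recurrences $\alpha_{n+1} = \alpha_n e^{-5\eps}$, $\beta_{n+1} = \beta_n e^{7\eps}$, $\gamma_{n+1} = \gamma_n e^{2\eps}$ are then forced by the cocycle inequalities for $l$ and $r$ together with Lemma \ref{lem:EstimatesOnLnorm}(iii). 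I expect the main obstacles to be (a) running the Perron argument with genuinely non-uniform (in $\omega$) bounds coming from the cocycles $l(\omega,x,n)$ and $r(F^n(\omega,x))$ rather than from uniform constants as in the compact case, and (b) producing the embedding maps $\theta_i$ continuously in the $C^1$-topology despite the weaker topology of uniform convergence on compact sets on $\Omega$; both should go through as in \citep{Liu95} once Lemma \ref{lem:ExistenceOfr} is in place.
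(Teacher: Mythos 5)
Your proposal is correct in outline and takes essentially the same route as the paper: the paper does not redo the Hadamard--Perron construction itself but defers it to \citep[Theorem III.3.1]{Liu95}, its only genuinely new content being the verification, via Lemma \ref{lem:ExistenceOfr} and Lemma \ref{lem:EstimatesOnLnorm}, that the key estimates \rref{eq:EstimatesOfStabManfThm} and \rref{eq:EstiamteOnStableManifold} hold on the shrinking Lyapunov balls of radius $r_0 e^{-3\eps n}$ with $c_0 = 4Ar'e^{2\eps}$ --- precisely the point your sketch identifies as the place where Assumption \ref{ass2} and the bound $r(F^n(\omega,x))\leq r'e^{\eps n}$ replace the compactness/uniform $C^2$ bounds. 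The remaining details of your Lyapunov--Perron fixed-point scheme coincide with the construction carried out in the cited reference.
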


\begin{proof}
For the proof see \citep[Theorem III.3.1]{Liu95}. But let us emphasize that the following estimates are essential for the proof and that they are satisfied in our situation. Put
\begin{align} \label{eq:DefConstants}
 \eps_0 := e^{a + 4 \eps} - e^{a + 2\eps}, \quad c_0 :=4 A r' e^{2\eps}, \quad r_0 := c_0^{-1}\eps_0.
\end{align}
Then one can easily check by using the results from Section \ref{sec:lyapunovmetric} that for $l \geq 0$ the map
\begin{align*}
F_{(\omega,x),l} = \exp^{-1}_{f^{l+1}_\omega x} \circ f_l(\omega) \circ \exp_{f^l_\omega x} : \left\{\xi \in T_{f^l_\omega x}\R^d : \Lnorm[l]{\xi} \leq r_0 e^{-3\eps l}\right\} \to T_{f^{l+1}_\omega x}\R^d
\end{align*}
satisfies
\begin{align} \label{eq:EstimatesOfStabManfThm}
\Lip_{\norm{\cdot}}(D_\cdot F_{(\omega,x),l}) \leq c_0 e^{3\eps l} \qquad \text{and} \qquad
\Lip_{\norm{\cdot}}(F_{(\omega,x),l} - D_0 F_{(\omega,x),l}) \leq \eps_0,
\end{align}
where $\Lip_{\norm{\cdot}}$ denotes the Lipschitz-constant with respect to $\Lnorm[l]{\cdot}$ and $\Lnorm[l+1]{\cdot}$. Furthermore if we define for $n, l \geq 0$
\begin{align*}
F^0_n(\omega,x) = \id,\qquad F^l_n(\omega,x) := F_{(\omega,x),n+l-1} \circ \dots \circ F_{(\omega,x),n}
\end{align*}
then for $(\xi_0,\eta_0) \in \exp^{-1}_x(W_0(\omega,x))$ with $\Lnorm[0]{(\xi_0,\eta_0)} \leq r_0$ we get for every $n \geq 0$ the estimate 
\begin{align} \label{eq:EstiamteOnStableManifold}
\Lnorm[n]{F^n_0(\omega,x)(\xi_0,\eta_0)} \leq \Lnorm[0]{(\xi_0,\eta_0)} e^{(a + 6\eps)n}.
\end{align}
\end{proof}

\subsection{Global Stable Manifolds}

Let us now show the existence of {\it global stable manifolds}. Denote
\begin{align*}
\hat\Lambda_0 := \Lambda_0 \cap \Gamma_0, \qquad \hat \Lambda_{a,b,k} := \Lambda_{a,b,k} \cap \hat\Lambda_0,
\end{align*}
where $\Lambda_0$ comes from Theorem \ref{thm:met} and $\Gamma_0$ from Lemma \ref{lem:ExistenceOfr}. Let $\{l'_m\}_{m \in \N}$ and $\{r'_m\}_{m \in\N}$ be a monotone sequence of positive numbers such that $l'_m \nearrow +\infty$ and $r'_m \nearrow +\infty$ as $m \to +\infty$. Then we have for all $m \in \N$
\begin{align*}
\Lambda_{a,b,k,\eps}^{l'_m,r'_m} \subset \Lambda_{a,b,k,\eps}^{l'_{m+1},r'_{m+1}}
\end{align*}
and
\begin{align*}
 \hat\Lambda_{a,b,k} = \bigcup_{m=1}^{+\infty}\Lambda_{a,b,k,\eps}^{l'_m,r'_m}.
\end{align*}
If we denote
\begin{align*}
\{[a_n,b_n]\}_{n\in\N} := \left\{[a,b]: a < b \leq 0,\text{ $a$ and $b$ are rational}\right\}
\end{align*}
let us define
\begin{align*}
\eps_n := \frac{1}{2}\min\left\{1,\frac{1}{(200d)}(b_n - a_n)\right\},
\end{align*}
then we have
\begin{align*}
 \hat\Lambda_0 = \left\{\bigcup_{n=1}^{+\infty}\bigcup_{k=1}^{d}\bigcup_{m=1}^{+\infty}\Lambda_{a_n,b_n,k,\eps_n}^{l'_m,r'_m}\right\} \cup \left\{(\omega,x) \in \hat\Lambda_0: \lambda^{(i)}(x) \geq 0, 1 \leq i \leq r(x)\right\}.
\end{align*}
Now we can state the following theorem, which is \citep[Theorem III.3.2]{Liu95} on the existence of global stable manifolds.

\begin{theorem} \label{thm:globalStableManifold}
Let $(\omega,x) \in \hat \Lambda_0\!\setminus\!\left\{(\omega,x) \in \hat\Lambda_0: \lambda^{(i)}(x) \geq 0, 1 \leq i \leq r(x)\right\}$ and let $\lambda^{(1)}(x) < \cdots < \lambda^{(p)}(x)$ be the strictly negative Lyapunov exponents at $(\omega,x)$. Define $W^{s,1}(\omega,x) \subset \cdots \subset W^{s,p}(\omega,x)$ by
\begin{align*}
W^{s,i}(\omega,x) := \left\{y \in \R^d : \limsup_{n\to\infty} \frac{1}{n} \log (\abs{f^n_\omega x - f^n_\omega y}) \leq \lambda^{(i)}(x)\right\}
\end{align*}
for $1 \leq i \leq p$. Then $W^{s,i}(\omega,x)$ is the image of $V^{(i)}_{(\omega,x)}$ under an injective immersion of class $C^{1,1}$ and is tangent to $V^{(i)}_{(\omega,x)}$ at $x$. In addition, if $y \in W^{s,i}(\omega,x)$ then
\begin{align*}
\limsup_{n\to\infty} \frac{1}{n} \log d^s(f^n_\omega x, f^n_\omega y) \leq \lambda^{(i)}(x)
\end{align*}
where $d^s(~,~)$ denotes the distance along the submanifold $f^n_\omega W^{s,i}(\omega,x)$.
\end{theorem}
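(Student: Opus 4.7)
The plan is to construct $W^{s,i}(\omega,x)$ as an increasing union of pullbacks of the local stable discs from Theorem \ref{thm:localStableManifold}, for parameters isolating the first $i$ Lyapunov exponents, and then to identify this union with the $\limsup$ level set described in the statement. Given $(\omega,x)$ with strictly negative exponents $\lambda^{(1)}(x) < \cdots < \lambda^{(p)}(x)$ and $i \in \{1,\dots,p\}$, I first pick rationals $a,b$ with $\lambda^{(i)}(x) < a < b \leq 0$ and, when $i < p$, $b < \lambda^{(i+1)}(x)$, and set $k_i := \dim V^{(i)}_{(\omega,x)}$, $\eps := \tfrac{1}{2}\min\{1,(b-a)/(200d)\}$. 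With this choice $(\omega,x) \in \Lambda_{a,b,k_i}$ and $\sTS{0} = V^{(i)}_{(\omega,x)}$, and the decomposition of $\hat\Lambda_0$ displayed just above the theorem produces some $m$ with $(\omega,x) \in \Lambda_{a,b,k_i,\eps}^{l'_m,r'_m}$. Theorem \ref{thm:localStableManifold} then supplies a sequence $\{W_n(\omega,x)\}_{n \geq 0}$ of $C^{1,1}$ embedded $k_i$-discs, each tangent at $f^n_\omega x$ to $\sTS{n}$.

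Next I would set
\[ W^{s,i}(\omega,x) := \bigcup_{n=0}^{\infty} (f^n_\omega)^{-1} W_n(\omega,x) \]
and show that this is an injectively immersed $C^{1,1}$ submanifold of dimension $k_i$, tangent to $V^{(i)}_{(\omega,x)}$ at $x$. Invariance (ii) in Theorem \ref{thm:localStableManifold} gives $f_n(\omega) W_n(\omega,x) \subseteq W_{n+1}(\omega,x)$, hence $(f^n_\omega)^{-1} W_n(\omega,x) \subseteq (f^{n+1}_\omega)^{-1} W_{n+1}(\omega,x)$, so the sequence is nested and increasing. Each term is a $C^{1,1}$ embedded $k_i$-disc through $x$ (the $f^n_\omega$ being $C^2$ diffeomorphisms), whose tangent at $x$ equals $(D_x f^n_\omega)^{-1} \sTS{n} = \sTS{0} = V^{(i)}_{(\omega,x)}$. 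Parametrising each pullback by a $C^{1,1}$ chart over a relatively open subset of $V^{(i)}_{(\omega,x)}$ that extends the preceding one and passing to the direct limit then yields the required injective $C^{1,1}$ immersion of $V^{(i)}_{(\omega,x)}$; that the chart domains exhaust $V^{(i)}_{(\omega,x)}$ follows because the local radii $\alpha_n$ decay only like $e^{-5\eps n}$ while $(D_x f^n_\omega)^{-1}$ expands tangent vectors in $V^{(i)}_{(\omega,x)}$, measured in the Lyapunov metric, by at least $e^{-(a+2\eps) n}$ (Lemma \ref{lem:EstimatesOnLnorm}(i)), so in the Euclidean metric the domains contain balls of radius bounded below by a constant times $e^{(-a-7\eps) n}$, which tends to infinity by the choice of $\eps$.

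For the characterisation by the $\limsup$ condition, one direction is a direct iteration of property (iii) of Theorem \ref{thm:localStableManifold}: if $y \in W^{s,i}(\omega,x)$ then $f^{n_0}_\omega y \in W_{n_0}(\omega,x)$ for some $n_0$, and
\[ d^s\!\bigl(f^{n_0+l}_\omega y, f^{n_0+l}_\omega x\bigr) \leq \gamma_{n_0} e^{(a+4\eps) l}\, d^s\!\bigl(f^{n_0}_\omega y, f^{n_0}_\omega x\bigr), \]
so both $\limsup \tfrac{1}{n}\log \abs{f^n_\omega x - f^n_\omega y}$ and $\limsup \tfrac{1}{n}\log d^s(f^n_\omega x, f^n_\omega y)$ are at most $a + 4\eps$; letting $a \searrow \lambda^{(i)}(x)$ and $\eps \searrow 0$ along the rational exhaustion gives the bound $\lambda^{(i)}(x)$ claimed in the theorem. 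The converse direction, that any $y$ satisfying $\limsup \tfrac{1}{n}\log \abs{f^n_\omega x - f^n_\omega y} \leq \lambda^{(i)}(x)$ must already lie in $W^{s,i}(\omega,x)$, is the substantive part: such a $y$ eventually satisfies $\abs{f^n_\omega y - f^n_\omega x} \leq e^{(a+\eps) n}$, so $\exp^{-1}_{f^n_\omega x}(f^n_\omega y)$ enters the $r_0 e^{-3\eps n}$-ball on which the quantitative estimates \rref{eq:EstimatesOfStabManfThm} and \rref{eq:EstiamteOnStableManifold} apply, and a graph-transform / shadowing argument excluding a nontrivial unstable component (which would otherwise grow at rate $e^{(b-2\eps) n}$, contradicting the assumed decay) forces $f^{n_0}_\omega y \in W_{n_0}(\omega,x)$ for some $n_0$.

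The main obstacle I anticipate is precisely this converse inclusion, together with the passage to the direct limit: one has to combine the uniform-in-$n$ hyperbolic estimates valid on the Pesin set (via the Lyapunov metric of Lemma \ref{lem:EstimatesOnLnorm}) with the sub-exponential growth of the constants $l(\omega,x,n)$ and $r(F^n(\omega,x))$, and balance these against the explicit growth rates $\alpha_{n+1} = \alpha_n e^{-5\eps}$, $\beta_{n+1} = \beta_n e^{7\eps}$, $\gamma_{n+1} = \gamma_n e^{2\eps}$ of Theorem \ref{thm:localStableManifold}; the smallness condition $\eps \leq (b-a)/(200d)$ is exactly what ensures that all such spurious growth rates remain dominated by the contraction rate $a$, so that the limit object is indeed a global $C^{1,1}$ immersed submanifold.
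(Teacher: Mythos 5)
The paper itself gives no proof of this theorem beyond the citation of \citep[Theorem III.3.2]{Liu95}, and your outline follows the same standard route as that reference: exhaust the global manifold by the pullbacks $(f^n_\omega)^{-1}W_n(\omega,x)$ of the local discs of Theorem \ref{thm:localStableManifold}, for a parameter choice isolating $\lambda^{(1)}(x),\dots,\lambda^{(i)}(x)$. As a proof, however, what you have written has a genuine gap exactly at the step you yourself call ``the substantive part'': the converse inclusion, that $\limsup_n\frac1n\log\abs{f^n_\omega x-f^n_\omega y}\leq\lambda^{(i)}(x)$ forces $f^{n_0}_\omega y\in W_{n_0}(\omega,x)$ for some $n_0$, is only named (``a graph-transform / shadowing argument excluding a nontrivial unstable component''), not carried out. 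This is the heart of the theorem: one must write $\exp^{-1}_{f^n_\omega x}(f^n_\omega y)=(\xi_n,\eta_n)\in E_n(\omega,x)\oplus H_n(\omega,x)$, compare $\eta_n$ with $h_{(\omega,x),n}(\xi_n)$, and use Lemma \ref{lem:EstimatesOnLnorm} together with \rref{eq:EstimatesOfStabManfThm} to show that a nonzero deviation from the graph is multiplied per step by at least $e^{b-2\eps}$ minus a Lipschitz defect of order $\eps_0$, hence leaves the ball of radius $r_0e^{-3\eps n}$ in finitely many steps, contradicting the assumed decay. Without this quantitative estimate the identification of your union with the set in the statement is not established.

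A second, related gap concerns the rate. Your easy direction only yields $\limsup\leq a+4\eps$, and since you \emph{defined} $W^{s,i}(\omega,x)$ as the union $U_{a,\eps}$ for one fixed admissible $(a,b,\eps,l',r')$, the phrase ``letting $a\searrow\lambda^{(i)}(x)$ and $\eps\searrow 0$'' has no force unless you also know that this union is contained in the corresponding union for every finer parameter choice. That containment is precisely the converse inclusion applied at the threshold $a+4\eps$, not merely at $\lambda^{(i)}(x)$ as you stated it: one needs $U_{a,\eps}\subseteq\{\limsup\leq a+4\eps\}\subseteq U_{a',\eps'}$ whenever $\lambda^{(i)}(x)<a'$ and $a+4\eps$ lies below the new cutoff $b'$, and only then does taking $a'\searrow\lambda^{(i)}(x)$ give both the asserted rate (for $\abs{\cdot}$ and for $d^s$) and the parameter-independence that legitimises your definition. (A smaller, repairable point: the ``direct limit'' of charts over increasing open subsets of $V^{(i)}_{(\omega,x)}$ needs compatible parametrisations --- the natural domains $(D_xf^n_\omega|_{V^{(i)}_{(\omega,x)}})^{-1}O_n(\omega,x)$ do not automatically extend one another --- but this is routine once the set-theoretic identification above is in place.)
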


\begin{proof}
See \citep[Theorem III.3.2]{Liu95}.
\end{proof}

\begin{definition}
For $(\omega,x)\in \Omega^\N\times\R^d$ the global stable manifold $W^s(\omega,x)$ is defined by
\begin{align*}
W^s(\omega,x) := \left\{y\in\R^d : \limsup_{n\to\infty}\frac{1}{n} \log (\abs{f^n_\omega x - f^n_\omega y}) < 0 \right\}.
\end{align*}
\end{definition}

Let $\Lambda' = \Lambda_{a,b,k,\eps}^{l',r'}$ be as considered for Theorem \ref{thm:localStableManifold}. For $(\omega,x) \in \Lambda'$ let $\lambda^{(1)}(x) < \dots < \lambda^{(i)}(x)$ be the Lyapunov exponents smaller than $a$. Then one can see that
\begin{align*}
 W^{s,i}(\omega,x) = \left\{y\in\R^d : \limsup_{n\to\infty}\frac{1}{n} \log (\abs{f^n_\omega x - f^n_\omega y}) \leq a \right\}.
\end{align*}
Thus if $(\omega,x) \in \hat \Lambda_0\!\setminus\!\left\{(\omega,x) \in \hat\Lambda_0: \lambda^{(i)}(x) \geq 0, 1 \leq i \leq r(x)\right\}$ and $\lambda^{(1)}(x) < \cdots < \lambda^{(p)}(x)$ are the strictly negative Lyapunov exponents at $(\omega,x)$ then we get
\begin{align*}
 W^s(\omega,x) = W^{s,p}(\omega,x)
\end{align*}
and hence $W^s(\omega,x)$ is the image of $V^{(p)}_{(\omega,x)}$ under an injective immersion of class $C^{1,1}$ and is tangent to $V^{(p)}_{(\omega,x)}$ at $x$.

\subsection{Another Estimate on the Derivative}
Before coming to the main theorem of this article we finally need to bound the derivative of the inverse of the function $F_{(\omega,x),n}$ at $0$.

\begin{lemma} \label{lem:DerivativeEstimate}
 There exists a set $\Gamma_1 \subset \Omega^\N \times \R^d$, with $F\Gamma_1 \subset \Gamma_1$ and $\nu^\N \times \mu(\Gamma_1) = 1$ such that for every $\delta \in (0,1)$, there exists a positive measurable function $C_\delta$ defined on $\Gamma_1$ such that for every $(\omega,x) \in \Gamma_1$ and $n \geq 0$ one has
\begin{align*}
\abs{D_0F^{-1}_{(\omega,x),n}} \leq C_\delta(\omega,x) e^{\delta n}.
\end{align*}
\end{lemma}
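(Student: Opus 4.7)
The plan is to reduce the claim to Birkhoff's pointwise ergodic theorem applied to the $\mathcal{L}^1$-observable supplied by Assumption \ref{ass1b}. First I would unpack the notation via the cocycle identity $F_{(\omega,x),n} = F_{F^n(\omega,x),0}$, which is immediate from $f_n(\omega) = f_0(\tau^n\omega)$ and $f^{n+1}_\omega x = f_0(\tau^n\omega)(f^n_\omega x)$. Writing
$$g(\omega,x) := \log\abs{D_0F^{-1}_{(\omega,x),0}},$$
this identity gives $\log\abs{D_0F^{-1}_{(\omega,x),n}} = g(F^n(\omega,x))$, so the task becomes to bound $g \circ F^n$ by $\delta n + \log C_\delta(\omega,x)$ along almost every orbit.

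By Assumption \ref{ass1b}, $g \in \mathcal{L}^1(\nu^\N\times\mu)$, and $g(\omega,x) > -\infty$ everywhere because $f_0(\omega)^{-1}$ is a diffeomorphism of class $C^2$. Since $\nu^\N\times\mu$ is $F$-invariant, Birkhoff's pointwise ergodic theorem yields a full-measure Borel set on which the averages $A_n(\omega,x) := \frac{1}{n}\sum_{k=0}^{n-1} g(F^k(\omega,x))$ converge to a finite limit. The standard telescoping identity
$$\frac{1}{n}g(F^n(\omega,x)) \;=\; \frac{n+1}{n}A_{n+1}(\omega,x) - A_n(\omega,x)$$
then forces $\frac{1}{n}g(F^n(\omega,x)) \to 0$ on that same set.

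Now let $\Gamma_1$ be the set on which this limit holds and $g(F^n(\omega,x))$ is finite for every $n \geq 0$; it has full measure, and forward invariance $F\Gamma_1 \subset \Gamma_1$ is immediate because shifting the orbit by one step perturbs the partial sums only by the term $g(\omega,x)/n \to 0$. For each $\delta \in (0,1)$, define
$$C_\delta(\omega,x) \;:=\; \sup_{n \geq 0} \exp\!\bigl(g(F^n(\omega,x)) - \delta n\bigr).$$
Sub-exponential growth of $g\circ F^n$ guarantees the existence of $N(\omega,x,\delta)$ with $g(F^n(\omega,x)) \leq \delta n$ for all $n \geq N$, so the supremum reduces to a maximum over the finite initial segment $\{0,\ldots,N\}$; thus $C_\delta$ is finite, strictly positive (since $C_\delta(\omega,x) \geq e^{g(\omega,x)} > 0$), and measurable on $\Gamma_1$. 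The stated inequality is then just a rearrangement of the definition. I do not foresee any real obstacle: the content is the soft fact that an integrable observable grows sub-exponentially along almost every orbit of a measure-preserving system; the only point that requires mild care is to choose $\Gamma_1$ genuinely $F$-invariant (by intersecting over all shifts if necessary) rather than invariant only modulo null sets.
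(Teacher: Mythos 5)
Your proposal is correct and follows essentially the same route as the paper: Assumption \ref{ass1b} gives integrability of $\log\abs{D_0F^{-1}_{(\omega,x),0}}$, the identity $F_{(\omega,x),n}=F_{F^n(\omega,x),0}$ reduces the claim to subexponential growth of this observable along orbits, Birkhoff's ergodic theorem yields $\frac{1}{n}\log\abs{D_0F^{-1}_{(\omega,x),n}}\to 0$ on a forward-invariant full-measure set $\Gamma_1$, and $C_\delta$ is obtained as the (finite) supremum of $\exp\bigl(\log\abs{D_0F^{-1}_{(\omega,x),n}}-\delta n\bigr)$ over $n$. Your explicit telescoping argument and the invariance check merely spell out steps the paper leaves implicit.
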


\begin{proof}
By Assumption \ref{ass1b} we have $\log\abs{D_0F^{-1}_{(\omega,x),0}} \in \mathcal{L}^1(\nu^\N\times\mu)$ and hence we get by Birkhoff's ergodic theorem the existence of a measurable set $\Gamma_1 \subset \Omega^\N \times \R^d$, which satisfies $F\Gamma_1 \subset \Gamma_1$ and $\nu^\N \times \mu(\Gamma_1) = 1$ such that for all $(\omega,x) \in \Gamma_1$
\begin{align*}
\frac{1}{n} \log \abs{D_0F^{-1}_{(\omega,x),n}} = \frac{1}{n} \log \abs{D_0F^{-1}_{F^n(\omega,x),0}} \to 0.
\end{align*}
Thus for all $\delta \in (0,1)$ we find a measurable function $C_\delta$ such that for all $n \geq 0$ and $(\omega,x) \in \Omega^{\N}\times\R^{d}$
\begin{align*}
\abs{D_0F^{-1}_{(\omega,x),n}} \leq C_\delta(\omega,x) e^{\delta n}.
\end{align*}
\end{proof}

Let us fix some $C' \geq 1$ such that the set
\begin{align*}
\Lambda_{a,b,k,\eps}^{l',r',C'} := \left\{(\omega,x) \in \Lambda_{a,b,k,\eps}^{l',r'} \cap \Gamma_1: C_\eps(\omega,x) \leq C' \right\}
\end{align*}
is non-empty.

\section{The Absolute Continuity Theorem} \label{sec:TheoremAbsoluteContinuity}

Let us abbreviate in the following
\begin{align*}
\Delta := \Lambda_{a,b,k,\eps}^{r',l',C'},
\end{align*}
where all parameters are chosen in such a way that $\Delta$ is non-empty. These parameters will be fixed from now on. Let us choose a sequence of approximating compact sets $\{\Delta^l\}_{l}$ with $\Delta^l \subset \Delta$ and $\Delta^l \subset \Delta^{l+1}$ such that $\prodm{\Delta\!\setminus\!\Delta^l} \to 0$ for $l \to \infty$ and let us fix arbitrarily such a set $\Delta^l$. For $(\omega,x) \in \Delta$ and $r >0$ define
\begin{align*}
\pLball[\Delta,\omega]{x}{r} := \exp_x \left(\left\{\zeta \in T_x\R^d : \Lnorm[0]{\zeta} < r\right\}\right)
\end{align*}
and if $(\omega,x) \in \Delta^l$ let
\begin{align*}
V_{\Delta^l}((\omega,x),r) := \left\{(\omega',x') \in \Delta^l : d(\omega,\omega') < r, x' \in \pLball[\Delta,\omega]{x}{r}\right\},
\end{align*}
where the distance $d$ in $\Omega^\N$ is as before the one induced by uniform convergence on compact sets for all derivatives up to order $2$. Let us denote the collection of local stable manifolds $\{W_0(\omega,x)\}_{(\omega,x)\in \Delta^l}$ which was constructed in Theorem \ref{thm:localStableManifold} in the following by $\{W_{loc}(\omega,x)\}_{(\omega,x)\in \Delta^l}$. Since by Theorem \ref{thm:localStableManifold} this is a continuous family of $C^1$ embedded $k$-dimensional discs and $\Delta^l$ is compact there exists uniformly on $\Delta^l$ a number $\delta_{\Delta^l} > 0$ such that for any $0 < q \leq \delta_{\Delta^l}$ and $(\omega',x')\in V_{\Delta^l}((\omega,x),q/2)$ the local stable manifold $W_{loc}(\omega',x')$ can be represented in local coordinates with respect to $(\omega,x)$, i.e. there exists a $C^1$ map
\begin{align*}
\phi : \left\{ \xi \in \sTS{0} : \Lnorm[0]{\xi} < q \right\} \to \uTS{0}
\end{align*}
with
\begin{align*}
\exp_x^{-1}\left(W_{loc}(\omega',x') \cap \pLball[\Delta,\omega]{x}{q}\right) = \graph(\phi).
\end{align*}
By choosing $\delta_{\Delta^l}$ even smaller we can ensure, that for $0 < q \leq \delta_{\Delta^l}$
\begin{align*}
\sup \left\{\Lnorm[0]{D_\xi \phi} : \xi \in \sTS{0}, \Lnorm[0]{\xi} < q  \right\} \leq\frac{1}{3}.
\end{align*}

For $(\omega,x) \in \Delta^l$ and $0 < q \leq \delta_{\Delta^l}$ we denote by $\Delta^l_\omega := \left\{x \in \R^d: (\omega,x) \in \Delta^l\right\}$ and by $\collLSM{x}{q}$ the collection of local stable submanifolds $W_{loc}(\omega,y)$ passing through $y \in \Delta^l_{\omega} \cap \pLball[\Delta,\omega]{x}{q/2}$. Set
\begin{align*}
&\localLSM{x}{q} := \bigcup_{y \in \Delta^l_{\omega} \cap \pLball[\Delta,\omega]{x}{q/2}} W_{loc}(\omega,y) \cap \pLball[\Delta,\omega]{x}{q}.
\end{align*}

Now let us introduce the notion of transversal manifolds to the collection of local stable manifolds.

\begin{definition} \label{def:transversalMfld}
A submanifold $W$ of $\R^d$ is called transversal to the family $\collLSM{x}{q}$ if the following hold true
\begin{enumerate}
\item $W \subset \pLball[\Delta,\omega]{x}{q}$ and $\exp_{x}^{-1}W$ is the graph of a $C^1$ map 
\begin{align*}
\psi: \left\{\eta \in \uTS{0} : \Lnorm[0]{\eta} < q\right\} \to \sTS{0};
\end{align*}
\item W intersects any $W_{loc}(\omega,y)$, $y \in \Delta^l_{\omega} \cap \pLball[\Delta,\omega]{x}{q/2}$, at exactly one point and this intersection is transversal, i.e. $T_{z}W \oplus T_{z}W_{loc}(\omega,y) = \R^d$ where $z = W \cap W_{loc}(\omega, y)$.
\end{enumerate}
\end{definition}

For a submanifold $W$ of $\R^d$ transversal to $\collLSM{x}{q}$ let
\begin{align*}
\norm{W} := \sup_\eta\Lnorm[0]{\psi(\eta)} + \sup_{\eta}\Lnorm[0]{D_\eta \psi}
\end{align*}
where the supremum is taken over $\{\eta \in \uTS{0}: \Lnorm[0]{\eta} < q\}$ and $\psi$ is the map representing $W$ as in the previous definition.

Now fix some $0 < q \leq \delta_{\Delta^l}$ and consider two submanifolds $W^1$ and $W^2$ transversal to $\collLSM{x}{q}$. By the choice of $\delta_{\Delta^l}$ each local stable manifold passing through $y \in \Delta^l_\omega \cap \pLball[\Delta,\omega]{x}{q/2}$ can be represented via some function $\phi$, whose norm of the derivative with respect to the Lyapunov metric is bounded by $1/3$. Thus the following map, which is usually called {\it Poincar\'e map} or {\it holonomy map}, is well defined. Let
\begin{align*}
P_{W^1,W^2}: W^1 \cap \localLSM{x}{q} \to W^2 \cap \localLSM{x}{q}
\end{align*}
be defined by
\begin{align*}
P_{W^1,W^2} : z = W^1\cap W_{loc}(\omega,y) \mapsto W^2\cap W_{loc}(\omega,y),
\end{align*}
for each $y \in \Delta^l_{\omega} \cap \pLball[\Delta,\omega]{x}{q/2}$. Since the collection of local stable manifolds is by Theorem \ref{thm:localStableManifold} a continuous family of $C^1$ embedded $k$-dimensional discs $P_{W^1,W^2}$ is a homeomorphism. To define, what is meant by absolute continuity of $\collLSM{x}{q}$, we will denote the Lebesgue measures on $W^i$ by $\lambda_{W^i}$ for $i = 1,2$.

\begin{definition}
The family $\collLSM{x}{q}$ is said to be absolutely continuous if  there exists a number $\eps_{\Delta^l_{\omega}}(x,q) > 0$ such that for any two submanifolds $W^1$ and $W^2$ transversal to $\collLSM{x}{q}$ and satisfying $\norm{W^{i}} \leq \eps_{\Delta^l_{\omega}}(x,q)$, $i = 1,2$, the Poincar\'e map $P_{W^1,W^2}$ constructed as above is absolutely continuous with respect to $\lambda_{W^1}$ and $\lambda_{W^2}$, i.e. $\lambda_{W^1} \approx \lambda_{W^2}\circ P_{W^1,W^2}$.
\end{definition}

Let us formulate the main theorem of this article, which is basically taken from \citep{Katok86}. As usual let us denote the Lebesgue measure on $\R^d$ by $\lambda$.

\begin{theorem} \label{thm:ACT2}
Let $\Delta^l$ be given as above. 
\begin{enumerate}
\item There exist numbers $0 < q_{\Delta^l} < \delta_{\Delta^l}/2$ and $\eps_{\Delta^l} > 0$ such that for every $(\omega,x) \in \Delta^l$ and $0 < q \leq q_{\Delta^l}$ the family $\collLSM{x}{q}$ is absolutely continuous with $\eps_{\Delta^l_{\omega}}(x,q) = \eps_{\Delta^l}$ uniformly on $\Delta^l$.

\item For every $C' \in (0,1)$ there exist numbers $0 < q_{\Delta^l}(C') < \delta_{\Delta^l}/2$ and $\eps_{\Delta^l}(C') > 0$ such that for each $(\omega,x) \in \Delta^l$ with $\lambda(\Delta_{\omega}^l) > 0$ and $x$ is a density point of $\Delta_{\omega}^l$ with respect to $\lambda$, and each two submanifolds $W^1$ and $W^2$ transversal to $\collLSM{x}{q_{\Delta^l}(C')}$ satisfying $\norm{W^{i}} \leq \eps_{\Delta^l}(C')$, $i = 1,2$, the Poincar\'e map $P_{W^1,W^2}$ is absolutely continuous and the Jacobian $J(P_{W^1,W^2})$ satisfies the inequality
\begin{align*}
\abs{J(P_{W^1,W^2})(y) - 1} \leq C'
\end{align*}
for $\lambda_{W^1}$-almost all $y \in W^1 \cap \localLSM{x}{q_{\Delta^l}(C')}$.
\end{enumerate}
\end{theorem}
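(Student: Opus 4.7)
The plan is to follow the Pesin/Katok-Strelcyn approach adapted to the random, non-compact setting. The key identity is the commutation
\begin{align*}
f^n_\omega\circ P_{W^1,W^2} = P_{W^1_n,W^2_n}\circ f^n_\omega|_{W^1},\qquad W^i_n := f^n_\omega W^i,
\end{align*}
which comes from Theorem \ref{thm:localStableManifold}(ii) because the dynamics permute stable leaves. Writing $P := P_{W^1,W^2}$ and $P_n := P_{W^1_n,W^2_n}$, this yields the multiplicative decomposition of Jacobians
\begin{align*}
J(P)(y) = \frac{J(f^n_\omega|_{W^2})(P(y))}{J(f^n_\omega|_{W^1})(y)}\,J(P_n)(f^n_\omega y).
\end{align*}
The strategy then splits into a geometric step, showing $J(P_n)\to 1$ uniformly as $n\to\infty$ because $W^1_n$ and $W^2_n$ collapse towards each other and towards $\uTS{n}$, and a dynamical step, showing that the ratio of Jacobians converges to a positive bounded function via a telescoping product.

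For the geometric step, corresponding pairs $(y,P(y))$ satisfy $d^s(f^n_\omega y,f^n_\omega P(y))\leq\gamma_0 e^{(a+4\eps)n}d^s(y,P(y))$ by Theorem \ref{thm:localStableManifold}(iii), so the two forward images converge exponentially. Combined with the $C^{1,1}$ control of Lemma \ref{lem:ExistenceOfr} and the Lyapunov-metric estimates of Lemma \ref{lem:EstimatesOnLnorm}, the tangent spaces of $W^i_n$ converge exponentially in the Grassmannian to $\uTS{n}$. Hence in local Lyapunov coordinates at $f^n_\omega x$ the two $W^i_n$ become graphs over $\uTS{n}$ of $C^1$ maps with $C^1$ norms tending to zero, so $P_n$ is close to the identity in these graph coordinates and $J(P_n)\to 1$ uniformly. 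For the dynamical step, I would telescope
\begin{align*}
\log\frac{J(f^n_\omega|_{W^1})(y)}{J(f^n_\omega|_{W^2})(P(y))} = \sum_{j=0}^{n-1}\log\frac{J(f_j(\omega)|_{W^1_j})(f^j_\omega y)}{J(f_j(\omega)|_{W^2_j})(f^j_\omega P(y))}.
\end{align*}
Each summand is controlled by the angle between $TW^1_j$ and $TW^2_j$ (decaying like $e^{(a+4\eps)j}$), the distance between corresponding points (also $e^{(a+4\eps)j}$), and the Lipschitz bound $\Lip(Df_j(\omega))\leq r(F^j(\omega,x))\leq r'e^{\eps j}$ from Lemma \ref{lem:ExistenceOfr}; Lemma \ref{lem:DerivativeEstimate} furnishes $\abs{D_0 F^{-1}_{(\omega,x),j}}\leq C' e^{\eps j}$, keeping Jacobians bounded away from zero. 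Each summand is then $O(e^{(a+5\eps)j})$, which is summable since $a<0$ and $\eps\leq(b-a)/(200d)$.

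Combining the two steps yields a well-defined positive Jacobian $J(P)$ uniformly on $W^1\cap\localLSM{x}{q}$, proving (i) for suitable constants $q_{\Delta^l},\eps_{\Delta^l}$ depending only on $a,b,k,\eps,l',r'$. For (ii) I would sharpen the estimates: shrinking $q_{\Delta^l}(C')$ and $\eps_{\Delta^l}(C')$ makes both the geometric error $\abs{J(P_n)-1}$ for some large fixed $n$ and the tail of the telescoping sum each smaller than $C'/2$, yielding $\abs{J(P)(y)-1}\leq C'$; the density-point hypothesis on $x\in\Delta^l_\omega$ is used to guarantee that for $\lambda_{W^1}$-almost every $y\in W^1\cap\localLSM{x}{q_{\Delta^l}(C')}$ the stable leaf through $y$ meets $\Delta^l_\omega$ at a density point, so the uniform Pesin-set estimates apply $\lambda_{W^1}$-a.e. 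The main obstacle will be keeping all estimates uniform despite two specific enemies of the random non-compact setting: the Lyapunov norm distorts Euclidean lengths by up to $Ae^{2\eps n}$ (Lemma \ref{lem:EstimatesOnLnorm}(iii)), and the forward orbit $F^n(\omega,x)$ generally leaves the compact Pesin set $\Delta^l$, so quantities like $l(\omega,x,n)$ and $r(F^n(\omega,x))$ only obey the growth bound $e^{\eps n}$. The argument closes only because the contraction rate $e^{(a+4\eps)n}$ beats every such $e^{\eps n}$ factor, which is exactly why the construction of Pesin sets was made with the restriction $\eps\leq\min\{1,(b-a)/(200d)\}$.
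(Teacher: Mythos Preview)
Your proposal takes a genuinely different route from the paper. You argue via the commutation identity and the resulting infinite-product formula for the Jacobian, whereas the paper never writes down $J(P)$ directly: instead it builds an explicit covering of $f^n_\omega(D(P,h))$ by graph pieces $D^1_{j,m}$ (Theorem~\ref{thm:ExsitenceOfPsi}, Lemmas~\ref{lem:FirstInclusion} and~\ref{lem:Woverlines}), pairs each piece with a slightly larger piece $\bar D^2_{j,m}$ on the second transversal (Lemma~\ref{lem:CompOfWns}), and then compares the pulled-back volumes of these pieces (Lemmas~\ref{lem:ComparionsOfDeterminantes} and~\ref{lem:CompPullingBack}) to get the measure inequality $\lambda_{W^2}(P(D(P,h)))\leq(1+\const{C}{6}C)\lambda_{W^1}(Q(P,h))$. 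Absolute continuity follows immediately, and the Jacobian bound in~(ii) comes a posteriori from Lebesgue--Vitali differentiation. Your telescoping determinant estimate is in fact present in the paper as Lemma~\ref{lem:ComparionsOfDeterminantes}, but it is applied to $D_{\hat z^i_n}F_0^{-n}$ restricted to tangent spaces of the local graph representations, not to a holonomy Jacobian.

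There is a real gap in your scheme as written. The identity $J(P)=\frac{J(f^n_\omega|_{W^2})\circ P}{J(f^n_\omega|_{W^1})}\cdot J(P_n)\circ f^n_\omega$ presupposes that both $P$ and $P_n$ are differentiable along the transversals, which is exactly what is at stake: in the non-uniformly hyperbolic setting the stable lamination is only H\"older, and the paper explicitly notes that $P_{W^1,W^2}$ is a priori only a homeomorphism. Moreover $P_n$ is itself a holonomy between $W^1_n$ and $W^2_n$, and these are \emph{not} transversals to any family $\collLSM{f^n_\omega x}{q}$ in the sense of Definition~\ref{def:transversalMfld}, since $F^n(\omega,x)$ has in general left $\Delta^l$; you acknowledge this obstacle but do not resolve it. The standard fix is to replace $P_n$ by the smooth orthogonal projection in Lyapunov coordinates from $W^1_n$ to $W^2_n$, but then the commutation identity fails and you must control the discrepancy---which is precisely what the paper's covering machinery (Proposition~\ref{prop:CompD1DbarD2} and Lemma~\ref{lem:CompOfWns}) does measure-theoretically rather than pointwise. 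Without such a step, ``the product converges'' does not yet prove that the limit is the Radon--Nikodym derivative of $\lambda_{W^2}\circ P$ with respect to $\lambda_{W^1}$.

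Finally, your reading of the density-point hypothesis in~(ii) is not how the paper uses it. The paper does not need that stable leaves through $y$ meet $\Delta^l_\omega$ at density points. Rather, the hypothesis that $x$ is a density point of $\Delta^l_\omega$ is used, via Fubini, to find a single linear transversal $W_\xi$ with $\lambda_{W_\xi}(W_\xi\cap\Delta^l_\omega)>0$; part~(i) then propagates this to $\lambda_{W^i}(W^i\cap\localLSM{x}{\const{q}{3}_C})>0$. The Jacobian bound at a point $y$ then comes from applying the measure inequality \rref{eq:proofdone} to shrinking balls $Q(y,h)$ and invoking Lebesgue--Vitali, using that $y$ itself is a density point of $W^1\cap\localLSM{x}{\const{q}{3}_C}$ and simultaneously that $P(y)$ is a density point on the $W^2$ side.
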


\section{Preparations for the Proof of the Absolute Continuity Theorem} \label{sec:ACT}

Before presenting the formal proof of the absolute continuity theorem we will shortly outline the approach, which is based on the idea of Anosov and Sinai \cite{Anosov67} and follows the proof of \cite{Katok86} for deterministic dynamical systems on a compact manifold.

The basic idea is that for fixed $(\omega,x) \in \Delta^l$ and some proper $q_{\Delta^l}$ and sufficiently large $n$ we apply the mapping $f^n_\omega$ to the subsets $\localLSM{x}{q_{\Delta^l}} \cap W^i$, $i = 1,2$, of the transversal manifolds. Because of the contraction in the stable directions, which is stronger than the one in other directions, the set $f^n_\omega\left(\localLSM{x}{q_{\Delta^l}} \cap W^1\right)$ lies within an exponentially small distance of the set $f^n_\omega\left(\localLSM{x}{q_{\Delta^l}} \cap W^2\right)$. By this we are able compare the Lebesgue measures of these sets and show that their ratio is close to $1$ (this is basically Proposition \ref{prop:CompD1DbarD2}). Finally comparing the Lebesgue volume of the pullbacks of these sets under the mapping $\left(f^n_\omega\right)^{-1}$ (see Lemma \ref{lem:CompPullingBack}) we obtain the desired result. The main problem here is that although $W^i$, $i = 1,2$ is the graph of a $C^1$ function, this is in general not true for $f^n_\omega(W^i)$ for $n \geq 0$. Thus in the following sections we will construct a proper covering of $f^n_\omega(W^i)$, $i = 1,2$, which will provide a local representation by functions that itself and their derivative can be controlled.

\subsection{Preliminaries}

Fix once and for all $(\omega,x) \in \Delta^l$ and let $n \in \N$. Then we define the following balls in the stable respectively unstable tangent spaces with respect to the usual Euclidean norm and the Lyapunov norm. For both objects we will use the same symbols, but a $\sim$ above the symbole indicates in the Lyapunov case. For $r > 0$, $z \in \Delta^l_\omega$ and $n\geq 0$ let
\begin{align*}
\sball[\bar \xi]{r}{z,n}&:= \left\{ \xi \in E_n(\omega,z) : \abs{\bar \xi-\xi} \leq r \right\},\\
\uball[\bar \eta]{r}{z,n}&:= \left\{ \eta \in H_n(\omega,z)  : \abs{\bar\eta- \eta} \leq r \right\},\\
\sLball[\bar \xi]{r}{z,n}&:= \left\{ \xi \in E_n(\omega,z)  : \LnormAt[n]{z}{\bar\xi- \xi} \leq r \right\},\\
\uLball[\bar \eta]{r}{z,n}&:= \left\{ \eta \in H_n(\omega,z) : \LnormAt[n]{z}{\bar\eta- \eta} \leq r \right\},
\end{align*}
where $\bar \xi \in E_n(\omega,z)$, $\bar \eta \in H_n(\omega,z)$, and
\begin{align*}
\ball[\bar \zeta]{r}{z,n} &:= \sball[\bar \xi]{r}{z,n} \times \uball[\bar \eta]{r}{z,n},\\
\Lball[\bar \zeta]{r}{z,n} &:= \sLball[\bar \xi]{r}{z,n} \times \uLball[\bar \eta]{r}{z,n},
\end{align*}
where $\bar \zeta = \bar \xi + \bar \eta$. If we consider the ball around the origin in $T_{f^n_\omega z}\R^d$, we will omit to specify the center of the ball, e.g. we will abbreviate $\sball{r}{z,n} := \sball[0]{r}{z,n}$. Let us emphasize that we have fixed $(\omega,x)$ in the beginning and thus in the following we will often omit to specify the dependence on $(\omega,x)$ or $\omega$ explicitely.

Let us consider $z \in \Delta_\omega^l \cap \pLball[\Delta,\omega]{x}{\delta_{\Delta^l}/2}$ and choose $y \in W_{loc}(\omega,z) \cap \pLball[\Delta,\omega]{z}{\delta_{\Delta^l}/2}$ on the local stable manifold. Then we will denote its representation in $T_z\R^d$ by
\begin{align*}
(\xi_0,\eta_0) := \exp_{z}^{-1}(y) \in \exp_{z}^{-1}(W_{loc}(\omega,z)) \cap \Lball{\delta_{\Delta^l}/2}{z,0}
\end{align*}
with $\xi_0 \in E_0(\omega,z)$ and $\eta_0 \in H_0(\omega,z)$ and
\begin{align*}
(\xi_n,\eta_n) := F^n_0(\omega,z) 
(\xi_0,\eta_0) = \exp_{f^n_\omega z}^{-1} (f^n_\omega y),
\end{align*}
where $\xi_n \in E_n(\omega,z)$ and $\eta_n \in H_n(\omega,z)$. In the future, when we have fixed the points $z$ and $y$ and thus the point $(\xi_0,\eta_0) \in \exp_{z}^{-1}(W_{loc}(\omega,z)) \cap \Lball{\delta_{\Delta^l}/2}{z,0}$, we will use the notation $\xi_n$ and $\eta_n$ exclusively in the sense defined above, without additional explanation.

The following proposition will allow us to compare Lyapunov norms at different points.

\begin{proposition} \label{prop:EstimateOnLnorms}
For every $z, z' \in \Delta^l_\omega$, every $z^1, z^2 \in \R^d$ and any $n \geq 0$ we have
\begin{align*}
&\LnormAt[n]{z}{\exp^{-1}_{f^n_\omega z}\left(f^n_\omega z^1\right) - \exp^{-1}_{f^n_\omega z}\left(f^n_\omega z^2\right)} \\
&\hspace{25ex}\leq 2 A e^{2\eps n}\LnormAt[n]{z'}{\exp^{-1}_{f^n_\omega z'}\left(f^n_\omega z^1\right) - \exp^{-1}_{f^n_\omega z'}\left(f^n_\omega z^2\right)},
\end{align*}
where $A$ was defined in Lemma \ref{lem:EstimatesOnLnorm}.
\end{proposition}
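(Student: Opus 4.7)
The claim really boils down to two almost trivial observations, so the plan is short.

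The first step is to exploit the fact that on $\R^d$ the exponential map is just translation, $\exp_y(x)=x+y$, hence $\exp_y^{-1}(a)=a-y$. Consequently, for any base points $y\in\R^d$,
\[
\exp^{-1}_{y}\bigl(f^n_\omega z^1\bigr) - \exp^{-1}_{y}\bigl(f^n_\omega z^2\bigr) \;=\; f^n_\omega z^1 - f^n_\omega z^2,
\]
so the vectors appearing on both sides of the inequality are literally the same element $v:=f^n_\omega z^1 - f^n_\omega z^2\in\R^d$; the only difference is that on the left it is measured by $\LnormAt[n]{z}{\cdot}$ (the Lyapunov norm built from the orbit of $z$) and on the right by $\LnormAt[n]{z'}{\cdot}$.

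The second step is to compare these two Lyapunov norms via the Euclidean norm. Since both $(\omega,z)$ and $(\omega,z')$ lie in $\Delta^l\subset\Lambda^{l'}_{a,b,k,\eps}$, Lemma \ref{lem:EstimatesOnLnorm}(iii) applies to each orbit with the same constant $A$, giving
\[
\LnormAt[n]{z}{v} \;\leq\; Ae^{2\eps n}\abs{v}
\qquad\text{and}\qquad
\abs{v} \;\leq\; 2\,\LnormAt[n]{z'}{v}.
\]
Chaining these two estimates yields
\[
\LnormAt[n]{z}{v} \;\leq\; 2Ae^{2\eps n}\,\LnormAt[n]{z'}{v},
\]
which is precisely the asserted inequality once one substitutes back $v=f^n_\omega z^1 - f^n_\omega z^2$.

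There is essentially no obstacle here; the proposition is a direct translation of the two-sided norm comparison in Lemma \ref{lem:EstimatesOnLnorm}(iii), combined with the additive form of $\exp_y$ on $\R^d$ (which is exactly why the tangent-space tags on $v$ do not matter, since all tangent spaces are canonically identified with $\R^d$). The only thing to remember is that one is allowed to apply Lemma \ref{lem:EstimatesOnLnorm}(iii) simultaneously at both orbits because $\Delta^l\subset\Lambda^{l'}_{a,b,k,\eps}$ provides a common uniform bound $l'$, and hence a common constant $A=4(l')^2(1-e^{-2\eps})^{-1/2}$.
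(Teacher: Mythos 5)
Your proof is correct and follows essentially the same route as the paper: both arguments rest on the translation structure of $\exp$ and on the two-sided comparison $\frac{1}{2}\abs{\zeta} \leq \Lnorm[n]{\zeta} \leq Ae^{2\eps n}\abs{\zeta}$ from Lemma \ref{lem:EstimatesOnLnorm}(iii), applied at $z$ and $z'$ respectively, with the common constant $A$ justified because both points lie in the same Pesin set. The only difference is cosmetic: the paper routes the identity of the two difference vectors through the mean value theorem applied to the transition map $\exp_{f^n_\omega z}^{-1}\circ\exp_{f^n_\omega z'}$ (whose derivative has norm $1$), whereas you observe directly that both differences equal $f^n_\omega z^1 - f^n_\omega z^2$, which is a slight streamlining of the same idea.
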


\begin{proof}
Fix $n \geq 0$, $z, z' \in \Delta^l_\omega$ and $z^1, z^2 \in \R^d$. For $\zeta \in T_{f^n_\omega z'}\R^d$ we have since the exponential map is a simple translation on $\R^d$
\begin{align*}
\abs{D_\zeta\left(\exp_{f^n_\omega z}^{-1}\circ\exp_{f^n_\omega z'}\right)} = 1.
\end{align*}
Denote by $L$ the line in $T_{f^n_\omega z'}\R^d$ connecting the points $\exp^{-1}_{f^n_\omega z'}(f^n_\omega z^1)$ and $\exp^{-1}_{f^n_\omega z'}(f^n_\omega z^2)$. By the mean value theorem and Lemma \ref{lem:EstimatesOnLnorm} we get
\begin{align*}
&\LnormAt[n]{z}{\exp^{-1}_{f^n_\omega z}\left(f^n_\omega z^1\right) - \exp^{-1}_{f^n_\omega z}\left(f^n_\omega z^2\right)} 
\leq A e^{2\eps n} \abs{\exp^{-1}_{f^n_\omega z}\left(f^n_\omega z^1\right) - \exp^{-1}_{f^n_\omega z}\left(f^n_\omega z^2\right)}\\
&\hspace{5ex}= A e^{2\eps n} \abs{\left(\exp^{-1}_{f^n_\omega z}\circ \exp_{f^n_\omega z'}\right)\circ  \exp^{-1}_{f^n_\omega z'}\left(f^n_\omega z^1\right) - \left(\exp^{-1}_{f^n_\omega z}\circ \exp_{f^n_\omega z'}\right) \circ \exp^{-1}_{f^n_\omega z'}\left(f^n_\omega z^2\right)}\\
&\hspace{5ex}\leq A e^{2\eps n} \sup_{\zeta \in L} \abs{D_\zeta\left(\exp_{f^n_\omega z}^{-1}\circ\exp_{f^n_\omega z'}\right)} \abs{\exp^{-1}_{f^n_\omega z'}\left(f^n_\omega z^1\right) - \exp^{-1}_{f^n_\omega z'}\left(f^n_\omega z^2\right)}\\
&\hspace{5ex}\leq 2 A e^{2\eps n} \LnormAt[n]{z'}{\exp^{-1}_{f^n_\omega z'}\left(f^n_\omega z^1\right) - \exp^{-1}_{f^n_\omega z'}\left(f^n_\omega z^2\right)}.
\end{align*}
\end{proof}

\subsection{Local Representation of Iterated Transversal Manifolds} \label{sec:ChoiceOfDelta}

From the main theorem of this section, Theorem \ref{thm:ExsitenceOfPsi}, we will deduce that the iterated transversal manifolds can be locally represented as the graph of some functions, which satisfy some invariance property and certain growth estimates. 

Let us fix some $C \in (0,1)$ and define the constant $\const{q}{1}_C$ by
\begin{align*}
\const{q}{1}_C := \min \left\{\frac{r_0}{2A}; \frac{1}{2c_0}\left(e^{b-2\eps} - e^{a+12\eps}\right); \frac{C}{4c_0}\left(e^{b-9d\eps} - e^{a+2\eps}\right); \delta_{\Delta^l} \right\},
\end{align*}
where $r_0$ and $c_0$ are defined in the proof of Theorem \ref{thm:localStableManifold} and $A$ in Lemma \ref{lem:EstimatesOnLnorm}. Further let $0 < q \leq \const{q}{1}_C$ and choose $z \in \Delta_\omega^l \cap \pLball[\Delta,\omega]{x}{q/2}$ and $y \in W_{loc}(\omega,z) \cap \pLball[\Delta,\omega]{z}{q/2}$.

From the proof of Theorem \ref{thm:localStableManifold} (see \rref{eq:EstiamteOnStableManifold}), it follows since $(\xi_0,\eta_0) \in \exp_z^{-1}\left(W_{loc}(\omega,z)\right)$ and $\LnormAt[0]{z}{(\xi_0,\eta_0)} \leq r_0$ that
\begin{align*} 
\LnormAt[n]{z}{(\xi_n,\eta_n)} 
= \LnormAt[n]{z}{F_0^n(\omega,z)(\xi_0,\eta_0)}
\leq e^{(a+6\eps)n} \LnormAt[0]{z}{(\xi_0,\eta_0)}.
\end{align*}

Then we have the following theorem, the main theorem of this section, which is basically \citep[Lemma II.6.1]{Katok86}.

\begin{theorem} \label{thm:ExsitenceOfPsi}
Let $z \in \Delta^l_\omega$, $0 < q \leq \const{q}{1}_C$ and $0 < \delta_{0} \leq q/4$, $(\xi_0,\eta_0) \in \exp_z^{-1} (W_{loc}(\omega,z))$ with $\LnormAt[0]{z}{(\xi_0,\eta_0)} \leq q/4$ and define $\delta'_{n} := \delta_{0} e^{(a+11\eps)n}$. Further let $\psi_{(\omega,z),0}: \uLball[\eta_0]{\delta_{0}}{z,0} \to E_0(\omega, z)$ be a mapping of class $C^1$ such that $\psi_{(\omega,z),0}(\eta_0) = \xi_0$ and
\begin{align}
\max_{\eta \in \uLball[\eta_0]{\delta_{0}}{z,0}} \LnormAt[0]{z}{\psi_{(\omega,z),0}(\eta)} &\leq \frac{q}{4} \label{eq:psi0Est}\\
\max_{\eta \in \uLball[\eta_0]{\delta_{0}}{z,0}} \LnormAt[0]{z}{D_{\eta}\psi_{(\omega,z),0}} &\leq C.\label{eq:Dpsi0Est}
\end{align}
Then there exists a unique sequence $\{\psi_{(\omega,z),n}\}_{n \geq 1}$ of mappings of class $C^1$ with
\begin{align*}
\psi_{(\omega,z),n}:\uLball[\eta_{n}]{\delta'_{n}}{z,n} \to E_n(\omega,z),
\end{align*}
such that for every $n \geq 0$ one has
\begin{align}
\psi_{(\omega,z),n}(\eta_n) &= \xi_n,\label{eq:PsiConsistancy}\\
\graph(\psi_{(\omega,z),n+1}) &\subseteq F_{(\omega,z),n}(\graph(\psi_{(\omega,z),n})),\label{eq:InvarianceOfW}
\end{align}
and
\begin{align}
\max_{\eta \in \uLball[\eta_{n}]{\delta'_{n}}{z,n}} \LnormAt[n]{z}{\psi_{(\omega,z),n}(\eta)} &\leq \left(\frac{1}{4} + C\right) qe^{(a+7\eps)n}\label{eq:psiEst}\\
\max_{\eta \in \uLball[\eta_{n}]{\delta'_{n}}{z,n}} \LnormAt[n]{z}{D_{\eta}\psi_{(\omega,z),n}} &\leq Ce^{-7d\eps n}.\label{eq:DpsiEst}
\end{align}
\end{theorem}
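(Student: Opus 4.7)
The plan is to proceed by induction on $n$, using the classical graph transform technique. Assuming $\psi_{(\omega,z),n}$ has been constructed with the stated properties, I would build $\psi_{(\omega,z),n+1}$ by solving an implicit equation: for each $\eta \in \uLball[\eta_{n+1}]{\delta'_{n+1}}{z,n+1}$ I seek a unique $\bar\eta$ in the domain of $\psi_{(\omega,z),n}$ such that the image $F_{(\omega,z),n}(\psi_{(\omega,z),n}(\bar\eta), \bar\eta)$ has unstable component equal to $\eta$, and then set $\psi_{(\omega,z),n+1}(\eta)$ to be the stable component of this image. Uniqueness of $\bar\eta$ will force \rref{eq:InvarianceOfW}, and \rref{eq:PsiConsistancy} will follow inductively from $(\xi_{n+1},\eta_{n+1}) = F_{(\omega,z),n}(\xi_n,\eta_n)$ together with $\psi_{(\omega,z),n}(\eta_n) = \xi_n$.

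For the construction step, decompose
\begin{align*}
F_{(\omega,z),n}(\xi,\eta) = \bigl(S^1_n\xi + \Phi^s_n(\xi,\eta),\ U^1_n\eta + \Phi^u_n(\xi,\eta)\bigr),
\end{align*}
where $\Phi^s_n$ and $\Phi^u_n$ vanish together with their first derivatives at the origin and have Lyapunov-Lipschitz constant at most $\eps_0$ by \rref{eq:EstimatesOfStabManfThm}. The defining equation for $\bar\eta$ becomes
\begin{align*}
\bar\eta = T_\eta(\bar\eta) := (U^1_n)^{-1}\bigl(\eta - \Phi^u_n(\psi_{(\omega,z),n}(\bar\eta), \bar\eta)\bigr).
\end{align*}
By Lemma \ref{lem:EstimatesOnLnorm} the linear map $U^1_n$ expands at rate at least $e^{b-2\eps}$ in the Lyapunov norm, so the Lipschitz constant of $T_\eta$ is at most $e^{-(b-2\eps)}\eps_0(1 + Ce^{-7d\eps n})$, which is strictly less than one by the choice of $\eps$ and $\const{q}{1}_C$. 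Next one checks that $T_\eta$ sends $\uLball[\eta_n]{\delta'_n}{z,n}$ into itself: the identity $\eta_n = (U^1_n)^{-1}(\eta_{n+1} - \Phi^u_n(\xi_n,\eta_n))$ gives $T_\eta(\eta_n) - \eta_n = (U^1_n)^{-1}(\eta - \eta_{n+1})$, whose Lyapunov norm is bounded by $e^{-(b-2\eps)}\delta'_{n+1} = e^{-(b-a-13\eps)}\delta'_n$; this, together with the contraction factor applied to the radius, fits inside $\delta'_n$ thanks to the gap $b - a \gg \eps$. Banach's fixed point theorem therefore yields a unique $\bar\eta$, and $\psi_{(\omega,z),n+1}(\eta)$ is well defined.

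The growth bounds \rref{eq:psiEst} and \rref{eq:DpsiEst} are then verified directly. The value bound follows from
\begin{align*}
\LnormAt[n+1]{z}{\psi_{(\omega,z),n+1}(\eta)} \leq e^{a+2\eps}\LnormAt[n]{z}{\psi_{(\omega,z),n}(\bar\eta)} + \eps_0 \LnormAt[n]{z}{\psi_{(\omega,z),n}(\bar\eta) + \bar\eta}
\end{align*}
combined with the inductive hypothesis and the definition of $\eps_0$. The derivative bound follows by implicit differentiation of the fixed point relation: one expresses $D_\eta \psi_{(\omega,z),n+1}$ in terms of $D_{\bar\eta}\psi_{(\omega,z),n}$, the linear maps $S^1_n, U^1_n$, and the derivatives of $\Phi^s_n, \Phi^u_n$, and the resulting effective rate is bounded by $e^{(a+2\eps)-(b-2\eps)}$ times a factor close to one, which sits below $e^{-7d\eps}$ for the imposed $\eps \leq (b-a)/(200d)$. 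The base case $n=0$ is exactly \rref{eq:psi0Est}--\rref{eq:Dpsi0Est}.

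The main obstacle is the bookkeeping needed to check that all three geometric scales — the radius $\delta'_{n+1}$ of the new domain, the value bound $(1/4+C)qe^{(a+7\eps)(n+1)}$, and the derivative bound $Ce^{-7d\eps(n+1)}$ — propagate correctly under the iteration using only the hyperbolicity gap $b-a$ and the smallness of $\eps$ that is already encoded in $\const{q}{1}_C$. Once the contraction $T_\eta$ is shown to preserve the correct ball, the remaining estimates are a routine combination of Lemma \ref{lem:EstimatesOnLnorm}, the Lipschitz control \rref{eq:EstimatesOfStabManfThm}, and, where needed, the change-of-basepoint estimate in Proposition \ref{prop:EstimateOnLnorms}.
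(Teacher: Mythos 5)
Your construction step is essentially the paper's: you split $F_{(\omega,z),n}$ into its linearization at $0$ along $E_n(\omega,z)\oplus H_n(\omega,z)$ plus remainders and invert the unstable component of the graph image; your Banach fixed point for $T_\eta$ is the same mechanism as the paper's inversion of the expanding map $\beta_n(\bar\eta)=B_{(\omega,z),n}\bar\eta+b_{(\omega,z),n}(\psi_{(\omega,z),n}(\bar\eta),\bar\eta)$, and your self-mapping and consistency bookkeeping is fine. The genuine gap is in how you propagate \rref{eq:DpsiEst} and \rref{eq:psiEst}: you only invoke the uniform Lipschitz bound $\eps_0$ for the nonlinear remainders (the second half of \rref{eq:EstimatesOfStabManfThm}), which is constant in $n$. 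Implicit differentiation then only yields a recursion of the form $\kappa_{n+1}\le\bigl(e^{a+2\eps}\kappa_n+\eps_0\bigr)\big/\bigl(e^{b-2\eps}-\eps_0\bigr)$ for $\kappa_n:=\sup_\eta\LnormAt[n]{z}{D_\eta\psi_{(\omega,z),n}}$, whose iterates do not tend to zero but stabilize near the fixed point $\eps_0/(e^{b-2\eps}-e^{a+2\eps}-\eps_0)$, a positive constant independent of $n$; so it can never produce the decay $Ce^{-7d\eps n}$, and it fails outright once $C$ is small (which is precisely the regime needed for part ii) of the absolute continuity theorem). Your ``effective rate $e^{(a+2\eps)-(b-2\eps)}$ times a factor close to one'' accounts only for the multiplicative part and ignores the additive term coming from the derivative of the stable remainder evaluated along the graph. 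The same issue breaks your value estimate: the remainder contribution is only bounded by $\eps_0\max\bigl\{\LnormAt[n]{z}{\psi_{(\omega,z),n}(\bar\eta)},\LnormAt[n]{z}{\bar\eta}\bigr\}\le 2\eps_0 q e^{(a+11\eps)n}$, and since $e^{(a+11\eps)n}/e^{(a+7\eps)(n+1)}$ grows like $e^{4\eps n}$ this eventually exceeds the target $(\tfrac14+C)qe^{(a+7\eps)(n+1)}$, so the induction does not close.

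What is missing is the refined, $n$-decaying bound on the derivative of the remainder along the graph, i.e.\ the paper's Proposition \ref{prop:tnestimate}: it uses the other half of \rref{eq:EstimatesOfStabManfThm}, $\Lip(D_\cdot F_{(\omega,z),n})\le c_0e^{3\eps n}$, together with $D_{(0,0)}t_{(\omega,z),n}=0$ and the fact that every point $(\psi_{(\omega,z),n}(\eta),\eta)$ of the graph has Lyapunov norm at most $2qe^{(a+11\eps)n}$, to obtain the Lipschitz constant $2qc_0e^{(a+14\eps)n}$, which decays because $a+14\eps<0$. That decay, combined with the term $\tfrac{C}{4c_0}\bigl(e^{b-9d\eps}-e^{a+2\eps}\bigr)$ in the definition of $\const{q}{1}_C$, is exactly what lets \rref{eq:DpsiEst} pass from $n$ to $n+1$. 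The bound \rref{eq:psiEst} is then not propagated directly as you do, but deduced from \rref{eq:PsiConsistancy}, the stable-manifold estimate \rref{eq:EstiamteOnStableManifold} applied to $(\xi_{n+1},\eta_{n+1})$, and the just-proved derivative bound multiplied by the radius $\delta'_{n+1}$. Incorporating these two ingredients (a second-order estimate localized to the shrinking graph, and the consistency-point route for the value bound) is the substantive content your proposal is missing; the rest of your argument matches the paper.
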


\begin{proof}
Although this is basically \citep[Lemma II.6.1]{Katok86} we will state the proof here for several reasons. In contrast to \citep{Katok86} we need to achieve a rate of convergence that involves the dimension $d$ in \rref{eq:DpsiEst} and this proof here includes the results from the proof of Theorem \ref{thm:localStableManifold} of \citep{Liu95} for the random case.

We will prove this theorem by induction. So let us show that for any $n \geq 0$ \rref{eq:psiEst} allows to define the mapping $\psi_{(\omega,z),n+1}$ satisfying the properties \rref{eq:InvarianceOfW}, \rref{eq:psiEst} and \rref{eq:DpsiEst} for $n+1$. The base of induction, for $n = 0$, follows directly from \rref{eq:psi0Est} and \rref{eq:Dpsi0Est}.

Let us assume the statement is true for some $n \geq 0$. Then the map $F_{(\omega,z),n}$ can be represented in coordinate form on $E_n(\omega,z) \oplus H_n(\omega,z)$ by
\begin{align*}
F_{(\omega,z),n}(\xi,\eta) = \left(A_{(\omega,z),n}\xi + a_{(\omega,z),n}(\xi,\eta),B_{(\omega,z),n}\eta+ b_{(\omega,z),n}(\xi,\eta)\right),
\end{align*}
where $\xi\in E_n(\omega,z)$, $\eta \in H_n(\omega,z)$,
\begin{align*}
A_{(\omega,z),n} = D_{0}F_{(\omega,z),n}\big|_{E_n(\omega,z)},\\
B_{(\omega,z),n} = D_{0}F_{(\omega,z),n}\big|_{H_n(\omega,z)},
\end{align*}
and $a_{(\omega,z),n}$, $b_{(\omega,z),n}$ are $C^1$ mappings with $a_{(\omega,z),n}(0,0) = 0$, $b_{(\omega,z),n}(0,0) = 0$ and their derivatives satisfy $D_{(0,0)}a_{(\omega,z),n} = 0$ and $D_{(0,0)}b_{(\omega,z),n} = 0$. By Lemma \ref{lem:EstimatesOnLnorm} we have
\begin{align} \label{eq:AundBestimate}
\LnormAt[n+1]{z}{A_{(\omega,z),n} \xi} &\leq e^{a+2\eps} \LnormAt[n]{z}{\xi}\quad \text{ for any } \xi \in E_n(\omega,z)\notag\\ 
\LnormAt[n+1]{z}{B_{(\omega,z),n} \eta} &\geq e^{b-2\eps} \LnormAt[n]{z}{\eta}\quad \text{ for any } \eta \in H_n(\omega,z).
\end{align}
Let $t_{(\omega,z),n} = \left(a_{(\omega,z),n},b_{(\omega,z),n}\right)$. The following proposition gives an estimate on $t_{(\omega,z),n}$ assuming the induction hypothesis (see \citep[Proposition II.6.3]{Katok86}).

\begin{proposition} \label{prop:tnestimate}
For every $\eta^1,\eta^2 \in \uLball[\eta_n]{\delta_n'}{z,n}$ we have
\begin{align*}
&\LnormAt[n+1]{z}{t_{(\omega,z),n}\left(\psi_{(\omega,z),n}(\eta^1),\eta^1\right) - t_{(\omega,z),n}\left(\psi_{(\omega,z),n}(\eta^2),\eta^2\right)} \\
&\hspace{50ex}\leq 2qc_0 e^{(a+14\eps)n}\LnormAt[n]{z}{\eta^1 - \eta^2},
\end{align*}
where $c_0$ is defined in the proof of Theorem \ref{thm:localStableManifold}.
\end{proposition}

\begin{proof}
This is basically the proof of \citep[Proposition II.6.3]{Katok86}.

By the mean value theorem we have 
\begin{align*}
&\LnormAt[n+1]{z}{t_{(\omega,z),n}(\psi_{(\omega,z),n}(\eta^1),\eta^1) - t_{(\omega,z),n}(\psi_{(\omega,z),n}(\eta^2),\eta^2)}\\
&\hspace{3ex}\leq \sup_{\zeta \in I}\LnormAt[n+1]{z}{D_\zeta t_{(\omega,z),n}}\max\left\{ \LnormAt[n]{z}{\psi_{(\omega,z),n}(\eta^1) - \psi_{(\omega,z),n}(\eta^2)}; \LnormAt[n]{z}{\eta^1- \eta^2} \right\},
\end{align*}
where $I$ denotes the line in $T_{f^n_\omega}\R^d$ that connects $(\psi_{(\omega,z),n}(\eta^1),\eta^1)$ and $(\psi_{(\omega,z),n}(\eta^2),\eta^2)$. For $\zeta \in I$ we have by induction hypothesis and $q \leq \const{q}{1}_C$
\begin{align}
\LnormAt[n]{z}{\zeta} &\leq \max_{i = 1,2}\left\{ \LnormAt[n]{z}{\psi_{(\omega,z),n}(\eta^i)};  \LnormAt[n]{z}{\eta^i}\right\}\notag \\
&\leq \left( \frac{1}{4} + C \right)q e^{(a + 7\eps)n} + \LnormAt[n]{z}{\eta_n} + \delta'_n\notag\\
&\leq \left( \frac{1}{4} + C \right)q e^{(a + 7\eps)n} + e^{(a+6\eps)n}\LnormAt[0]{z}{(\xi_0,\eta_0)} + \delta_0 e^{(a+11\eps)n}\notag\\
&\leq 2 q e^{(a + 11\eps)n} \leq r_0 e^{-3 \eps n}. \label{eq:EstimateOnZeta}
\end{align}
Because of $D_\zeta t_{(\omega,z),n} = D_\zeta F_{(\omega,z),n} - D_0 F_{(\omega,z),n}$ we can apply \rref{eq:EstimatesOfStabManfThm} and thus we get for $\zeta \in I$ by \rref{eq:EstimateOnZeta}
\begin{align*}
\LnormAt[n\to n+1]{z}{D_\zeta t_{(\omega,z),n}} \leq c_0e^{3\eps n} \LnormAt[n]{z}{\zeta} \leq 2qc_0e^{(a + 14\eps) n}.
\end{align*}
And by assumption \rref{eq:psiEst} and the mean value theorem we have
\begin{align*}
&\max \left\{ \LnormAt[n]{z}{\psi_{(\omega,z),n}(\eta^1) - \psi_{(\omega,z),n}(\eta^2)}; \LnormAt[n]{z}{\eta^1- \eta^2} \right\}\\
&\hspace{25ex}\leq \max\{Ce^{-7d\eps n}; 1\}\LnormAt[n]{z}{\eta^1- \eta^2} = \LnormAt[n]{z}{\eta^1- \eta^2},
\end{align*}
which finally yields the assertion.
\end{proof}

By Proposition \ref{prop:tnestimate} and \rref{eq:AundBestimate} the mapping $\beta_n: \uLball[\eta_n]{\delta'_n}{z,n} \to H_{n+1}(\omega,z)$ defined by 
\begin{align*}
 \beta_n(\eta) = B_{(\omega,z),n}\eta + b_{(\omega,z),n}(\psi_{(\omega,z),n}(\eta),\eta)
\end{align*}
satisfies for $\eta^1, \eta^2 \in \uLball[\eta_n]{\delta'_n}{z,n}$ since $q \leq \const{q}{1}_C$
\begin{align}
&\LnormAt[n+1]{z}{\beta_n(\eta^1) - \beta_n(\eta^2)} \notag\\ 
&\hspace{10ex}\geq \LnormAt[n+1]{z}{B_{(\omega,z),n}(\eta^1-\eta^2)} \notag\\
&\hspace{20ex}- \LnormAt[n+1]{z}{b_{(\omega,z),n}(\psi_{(\omega,z),n}(\eta^1),\eta^1) - b_{(\omega,z),n}(\psi_{(\omega,z),n}(\eta^2),\eta^2)} \notag\\
&\hspace{10ex}\geq \left(e^{b-2\eps} - 2qc_0e^{(a + 14\eps) n} \right)\LnormAt[n]{z}{\eta^1-\eta^2}\notag\\
&\hspace{10ex}\geq e^{a+12\eps}\LnormAt[n]{z}{\eta^1-\eta^2} \label{eq:EstimateOnbeta}.
\end{align}
Thus $\beta_n$ is an $C^1$ injective immersion and its image contains the ball of radius $e^{a+12\eps}\delta'_n > e^{a+11\eps}\delta'_n = \delta'_{n+1}$ around (using \rref{eq:PsiConsistancy} for $n$)
\begin{align*}
 \beta_n(\eta_n) = B_{(\omega,z),n}\eta_n+ b_{(\omega,z),n}(\psi_{(\omega,z),n}(\eta_n),\eta_n) = B_{(\omega,z),n}\eta_n+ b_{(\omega,z),n}(\xi_n,\eta_n) = \eta_{n+1}.
\end{align*}
In particular $\beta^{-1}_n$ is well defined and $C^1$ on $\uLball[\eta_{n+1}]{\delta'_{n+1}}{z,n+1}$. This allows us to express $\psi_{(\omega,z),n+1}$ as
\begin{align*}
\psi_{(\omega,z),n+1} = \pi_{E_{n+1}(\omega,z)} \circ F_{(\omega,z),n} \circ (\psi_{(\omega,z),n}\times\id_{H_n(\omega,z)}) \circ \beta_n^{-1},
\end{align*}
where $\pi_{E_{n+1}(\omega,z)}$ denotes the orthogonal projection of $T_{f^n_\omega}\R^d$ to $E_{n+1}(\omega,z)$ with respect to $\langle\cdot,\cdot\rangle_{(\omega,z),n}$ and $\id_{H_n(\omega,z)}$ the identity map in $H_n(\omega,z)$. Then we immediately get
\begin{align*}
&\left\{\left(\psi_{(\omega,z),n+1}(\eta),\eta\right):\eta \in \uLball[\eta_{n+1}]{\delta'_{n+1}}{z,n+1}\right\} \\
&\hspace{10ex}\subseteq F_{(\omega,z),n}\left(\left\{\left(\psi_{(\omega,z),n}(\eta),\eta\right):\eta \in \uLball[\eta_{n}]{\delta'_{n}}{z,n}\right\}\right),
\end{align*}
which is \rref{eq:InvarianceOfW} and $\psi_{(\omega,z),n+1}(\eta_{n+1}) = \xi_{n+1}$, which is \rref{eq:PsiConsistancy}. In the next step we need to achieve the estimate in \rref{eq:DpsiEst} for $n+1$. Our aim is to estimate (for ease of notation we will abbreviate $\LnormAt[n]{z}{\cdot}$ by $\norm{\cdot}_n$ and $\psi_{(\omega,z),n}$ by $\psi_n$)
\begin{align*}
\frac{\norm{\psi_{n+1}(\eta +  \tau) - \psi_{n+1}(\eta)}_{n+1}}{\norm{\tau}_{n+1}},
\end{align*}
for $\eta, \eta+\tau \in \uLball[\eta_{n+1}]{\delta'_{n+1}}{z,n+1}$. Let $\tilde \eta := \beta_n^{-1}(\eta)$ and $\tilde \eta + \tilde \tau := \beta_n^{-1}(\eta + \tau)$. Because of \rref{eq:EstimateOnbeta} we have $\tilde \eta, \tilde \eta + \tilde \tau \in \uLball[\eta_{n}]{\delta'_{n}}{z,n}$. By definition of $\beta_n$ we have
\begin{align*}
\tau = \beta_n(\tilde \eta + \tilde \tau) - \beta_n(\tilde \eta) = B_n\tilde\tau + b_n(\psi_n(\tilde \eta + \tilde\tau),\tilde \eta + \tilde\tau) - b_n(\psi_n(\tilde \eta),\tilde \eta).
\end{align*}
Since $F_{(\omega,z),n}(\psi_n(\tilde \eta),\tilde \eta) = (\psi_{n+1}(\eta),\eta)$ and $F_{(\omega,z),n}(\psi_n(\tilde \eta + \tilde \tau),\tilde \eta+ \tilde \tau) = (\psi_{n+1}(\eta +\tau),\eta+\tau)$ we get
\begin{align*}
\psi_{n+1}(\eta) &= A_n\psi_n(\tilde \eta) + a_n(\psi_n(\tilde \eta),\tilde \eta),\\
\psi_{n+1}(\eta + \tau) &= A_n\psi_n(\tilde \eta + \tilde \tau) + a_n(\psi_n(\tilde \eta + \tilde \tau),\tilde \eta + \tilde \tau).
\end{align*}
By choice of $q \leq \const{q}{1}_C$ we have that $2qc_0 < e^{b-2\eps}$. Thus applying Proposition \ref{prop:tnestimate} and \rref{eq:AundBestimate} we get
\begin{align*}
&\frac{\norm{\psi_{n+1}(\eta +  \tau) - \psi_{n+1}(\eta)}_{n+1}}{\norm{\tau}_{n+1}}\\
&\hspace{5ex}= \frac{\norm{A_n\left(\psi_n(\tilde \eta + \tilde \tau) - \psi_n(\tilde \eta)\right) + a_n(\psi_n(\tilde \eta + \tilde \tau),\tilde \eta + \tilde \tau) - a_n(\psi_n(\tilde \eta),\tilde \eta)}_{n+1}}{\norm{B_n\tilde\tau + b_n(\psi_n(\tilde \eta + \tilde\tau),\tilde \eta + \tilde\tau) - b_n(\psi_n(\tilde \eta),\tilde \eta)}_{n+1}}\\
&\hspace{5ex}\leq \frac{e^{a+2\eps}\norm{\psi_n(\tilde \eta + \tilde \tau) - \psi_n(\tilde \eta)}_n + \norm{a_n(\psi_n(\tilde \eta + \tilde \tau),\tilde \eta + \tilde \tau) - a_n(\psi_n(\tilde \eta),\tilde \eta)}_{n+1}}{\norm{B_n\tilde\tau}_{n+1} - \norm{b_n(\psi_n(\tilde \eta + \tilde\tau),\tilde \eta + \tilde\tau) - b_n(\psi_n(\tilde \eta),\tilde \eta)}_{n+1}}\\
&\hspace{5ex}\leq \frac{e^{a + 2\eps} \frac{\norm{\psi_n(\tilde \eta + \tilde \tau) - \psi_n(\tilde \eta)}_n}{\norm{\tilde \tau}_n} + 2qc_0 e^{(a+14\eps)n}}{e^{b-2\eps} - 2qc_0 e^{(a+14\eps)n}}.
\end{align*}
Since $\norm{\tau}_{n+1} \to 0$ implies by continuity of $\beta_n$ that $\norm{\tilde \tau}_{n} \to 0$ so by the induction hypothesis we get
\begin{align*}
&\sup_{\eta\in\uLball[\eta_{n+1}]{\delta'_{n+1}}{z,{n+1}}} \limsup_{\norm{\tau}_{n+1} \to 0} \frac{\norm{\psi_{n+1}(\eta +  \tau) - \psi_{n+1}(\eta)}_{n+1}}{\norm{\tau}_{n+1}} \\
&\hspace{45ex}\leq \frac{e^{a + 2\eps} C e^{-7d\eps n} + 2qc_0 e^{(a+14\eps)n}}{e^{b-2\eps} - 2qc_0 e^{(a+14\eps)n}}\\
&\hspace{45ex}\leq e^{-7d\eps n} \frac{e^{a + 2\eps} C  + 2qc_0 e^{(a+21d\eps)n}}{e^{b-2\eps} - 2qc_0 e^{(a+14\eps)n}}\\
&\hspace{45ex}\leq e^{-7d\eps n} \frac{e^{a + 2\eps} C  + 2qc_0}{e^{b-2\eps} - 2qc_0}.
\end{align*}
Since $q \leq \const{q}{1}_C$ we have
\begin{align*}
\max_{\eta \in \uLball[\eta_{n+1}]{\delta'_{n+1}}{z,n+1}}\!\! \norm{D_{\eta}\psi_{n+1}}_{n+1} &\leq \!\!\!\sup_{\eta\in\uLball[\eta_{n+1}]{\delta'_{n+1}}{z,{n+1}}} \limsup_{\norm{\tau}_{n+1} \to 0} \frac{\norm{\psi_{n+1}(\eta +  \tau) - \psi_{n+1}(\eta)}_{n+1}}{\norm{\tau}_{n+1}}\\ &\leq C e^{-7d\eps (n+1)}.
\end{align*}
The last step is to verify \rref{eq:psiEst} for $n+1$. Observe that for $\eta \in \uLball[\eta_{n+1}]{\delta'_{n+1}}{z,n+1}$
\begin{align*}
\norm{\psi_{n+1}(\eta)}_{n+1} &\leq \norm{\psi_{n+1}(\eta_{n+1})}_{n+1} + \norm{\psi_{n+1}(\eta) - \psi_{n+1}(\eta_{n+1})}_{n+1} \\
&\leq \norm{(\xi_{n+1},\eta_{n+1})}_{n+1} + \sup_{\eta \in \uLball[\eta_{n+1}]{\delta'_{n+1}}{z,n+1}} \norm{D_\eta\psi_{n+1}}_{n+1} \norm{\eta_{n+1} - \eta}_{n+1}\\
&\leq e^{(a+6\eps)(n+1)}\norm{(\xi_0,\eta_0)}_0 + \delta'_{n+1} C e^{-7d\eps n}\\
&\leq \frac{q}{4}e^{(a+6\eps)(n+1)} + \frac{q}{4} C e^{(a + 11\eps)(n+1)}e^{-7d\eps n}\\
&\leq \left(\frac{1}{4} + C\right)qe^{(a+7\eps)(n+1)},
\end{align*}
which proves \rref{eq:psiEst} for $n+1$ by taking the supremum over all $\eta \in \uLball[\eta_{n+1}]{\delta'_{n+1}}{z,n+1}$.
\end{proof}

Since $E_0(\omega,z)$ and $H_0(\omega,z)$ depend continuously on $(\omega,z)$ we can choose an orthonormal basis $\{\zeta_i(\omega,z) : i = 1, \dots, d\}$ of $T_z\R^d$ with respect to $\left\langle\cdot,\cdot\right\rangle_{(\omega,z),0}$ such that $\{\zeta_i(\omega,z) : i = 1, \dots, k\}$ is a basis of $E_0(\omega,z)$ and which also depends continuously on $(\omega,z) \in \Delta^l$. Let us define for each $(\omega,z) \in \Delta^l$ the linear map
\begin{align*}
A(\omega,z): \R^d \to T_z\R^d, \quad A(\omega,z)e_i = \zeta_i(\omega,x),
\end{align*}
where $e_i$ denotes the $i\textsuperscript{th}$ unit vector in $\R^d$. Since $\zeta_i(\omega,z)$ depends continuously on $(\omega,z)$ the same is true for $A(\omega,z)$. Then for $(\omega,z),(\omega',z') \in \Delta^l$ let us denote the map
\begin{align*}
I_{(\omega,z),(\omega',z')}  : \R^d \to \R^d, \quad I_{(\omega,z),(\omega',z')} = A(\omega',z')^{-1}\circ \exp_{z'}^{-1} \circ \exp_z\circ A(\omega,z).
\end{align*}
The function $I_{(\omega,z),(\omega',z')}$ describes the change of basis from $T_z\R^d$ to $T_{z'}\R^d$ equipped with the orthonormal basis with respect to the Lyapunov metric. Then we have the following lemma, which is \citep[Proposition 7.1]{Katok86}.

\begin{lemma} \label{lem:BoundOnIdentity}
There exists a continuous nondecreasing function $R: [0,\infty) \to \R$ with $R(0) = 0$, $R(q) > 0$ for $q > 0$ such that for any $(\omega,z) \in \Delta^l$ and $(\omega',z') \in V_{\Delta^l}((\omega,z),q)$ and for every $v \in \R^d$ with $\abs{v} \leq 1$ we have
\begin{align*}
\abs{D_vI_{(\omega,z),(\omega',z')} - \id} \leq R(q).
\end{align*}
\end{lemma}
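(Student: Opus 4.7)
The first observation is that on $\R^d$ the exponential map is just a translation: under the canonical identification $T_w\R^d \cong \R^d$ one has $\exp_{z'}^{-1}\circ\exp_{z}(v) = v + (z - z')$. Its differential at every point is therefore the identity map of $\R^d$. Consequently
\begin{align*}
D_v I_{(\omega,z),(\omega',z')} = A(\omega',z')^{-1}\circ A(\omega,z)
\end{align*}
for every $v \in \R^d$, and in particular the derivative is independent of $v$ and of $|v|$. So the problem reduces to bounding $|A(\omega',z')^{-1}\circ A(\omega,z) - \id|$ by some $R(q)$ whenever $(\omega',z') \in V_{\Delta^l}((\omega,z),q)$.

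Next I would exploit compactness. The frame $\{\zeta_i(\omega,z) : i = 1, \dots, d\}$ was chosen to depend continuously on $(\omega,z) \in \Delta^l$ (which is guaranteed since the subspaces $E_0(\omega,z), H_0(\omega,z)$ and the Lyapunov inner product $\Linner[0]{\cdot}{\cdot}$ depend continuously on $(\omega,z) \in \Delta^l$ by Lemma \ref{lem:ContinuousDependeceOfStableSpaces}). Hence the linear map $A(\omega,z)$, as well as its inverse, depends continuously on $(\omega,z) \in \Delta^l$. Since $\Delta^l$ is compact, both $A$ and $A^{-1}$ are uniformly continuous on $\Delta^l$. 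The condition $(\omega',z') \in V_{\Delta^l}((\omega,z),q)$ forces $d(\omega,\omega') < q$ and, by Lemma \ref{lem:EstimatesOnLnorm}(iii), $|z - z'| \leq 2\Lnorm[0]{\exp^{-1}_{z}z'} < 2q$, so $(\omega',z')$ is close to $(\omega,z)$ in the product topology of $\Omega^\N\times\R^d$ uniformly in the base point.

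Putting this together, define
\begin{align*}
R_0(q) := \sup\bigl\{\abs{A(\omega',z')^{-1}A(\omega,z) - \id} : (\omega,z) \in \Delta^l,\ (\omega',z') \in V_{\Delta^l}((\omega,z),q)\bigr\}.
\end{align*}
By uniform continuity of $A,A^{-1}$ on $\Delta^l$ one has $R_0(q) \to 0$ as $q \to 0^+$, and $R_0$ is nondecreasing in $q$ by construction. To obtain a \emph{continuous} nondecreasing majorant with $R(0) = 0$, I would take, e.g., $R(q) := \inf_{q' > q}\bigl(R_0(q') + (q'-q)\bigr)$, which is Lipschitz, nondecreasing, bounds $R_0$ from above, and satisfies $R(0) = 0$.

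There is really no substantial obstacle here: once one notices that the exponential maps are translations, the derivative of $I_{(\omega,z),(\omega',z')}$ collapses to the change-of-basis matrix $A(\omega',z')^{-1}A(\omega,z)$, and everything reduces to the uniform continuity of the continuously chosen orthonormal Lyapunov frame on the compact Pesin set $\Delta^l$. The only mildly technical point is constructing a continuous modulus $R$ from the a priori only monotone bound $R_0$, which is a standard envelope argument.
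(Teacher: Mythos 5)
Your core argument is sound and is essentially the paper's: the paper likewise observes that $A(\omega,z)$ depends continuously on $(\omega,z)$, deduces (uniform) continuity of $\left((\omega,z),(\omega',z'),v\right)\mapsto D_vI_{(\omega,z),(\omega',z')}$ on the compact set $\Delta^l\times\Delta^l\times\{\abs{v}\le 1\}$, and defines $R(q)$ as a supremum of differences of such derivatives over $q$-close configurations, the bound following because $D_vI_{(\omega',z'),(\omega',z')}=\id$. Your additional observation that the exponential maps are translations, so that $D_vI_{(\omega,z),(\omega',z')}=A(\omega',z')^{-1}A(\omega,z)$ independently of $v$, is correct and actually streamlines the reduction; the estimate $\abs{z-z'}\le 2\Lnorm[0]{\exp_z^{-1}(z')}<2q$ via Lemma \ref{lem:EstimatesOnLnorm} is also the right way to convert membership in $V_{\Delta^l}((\omega,z),q)$ into metric closeness.

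The one genuine defect is the final regularization step. The envelope $R(q):=\inf_{q'>q}\bigl(R_0(q')+(q'-q)\bigr)$ is a nondecreasing majorant of $R_0$ with $R(0)=0$, but it is neither Lipschitz nor even continuous in general: if $R_0=0$ on $[0,1]$ and $R_0=1$ on $(1,\infty)$ (a right-jump, which your $R_0$ can in principle have, since $V_{\Delta^l}((\omega,z),q)$ grows with $q$ and new admissible pairs can enter at a threshold radius), then $R\equiv 0$ on $[0,1)$ while $R(1)=1$. Moreover $R(q)>0$ for all $q>0$ is not guaranteed if $R_0$ vanishes near $0$. Both issues are easily repaired — e.g.\ take $R(q):=q+\int_1^2 R_0(qu)\,\dx u$ for $q>0$ and $R(0):=0$, which is continuous (monotone $R_0$ is a.e.\ continuous, so dominated convergence applies), nondecreasing, strictly positive for $q>0$, dominates $R_0(q)$, and tends to $0$ as $q\to 0^+$ — so this is a fixable technical slip rather than a conceptual gap; note the paper's own proof asserts the stated properties of its sup-defined $R$ without verifying continuity either.
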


\begin{proof}
Since $A(\omega,z)$ is linear and depends continuously on $(\omega,z)$ the function
\begin{align*}
\left((\omega,z),(\omega',z'),v\right) \mapsto D_vI_{(\omega,z),(\omega',z')}
\end{align*}
is continuous and hence uniformly continuous on the compact set $\Delta^l \times \Delta^l \times \{v \in \R^d: \abs{v} \leq 1\}$. Thus let us define
\begin{align*}
R(q) := \sup_{(\omega,z),(\bar \omega,\bar z) \in \Delta^l} \sup_{\substack{(\omega',z') \in V_{\Delta^l}((\omega,z),q) \\ (\bar\omega',\bar z') \in V_{\Delta^l}((\bar\omega,\bar z),q)}} \sup_{\substack{v, \bar v\in \R^d\\\abs{v} \leq 1, \abs{v-\bar v}\leq q}}\abs{D_vI_{(\omega,z),(\omega',z')} - D_{\bar v}I_{(\bar\omega,\bar z),(\bar\omega',\bar z')}}.
\end{align*}
Clearly $0 \leq R(q) < +\infty$ for $q \geq 0$ and if one chooses $(\omega',z') = (\bar \omega,\bar z) = (\bar \omega', \bar z')$ and $v = \bar v$ then this is exactly the desired.
\end{proof}

Now let $0 < \const{q}{2} \leq \delta_{\Delta^l}$ be such that $0 < R(\const{q}{2}) < \frac{1}{5}$ and let $W$ be a transversal submanifold of $\pLball[\Delta,\omega]{x}{\const{q}{2}}$ with $\norm{W} \leq 1/2$. Then by choice of $\delta_{\Delta^l}$ for all $(\omega',x') \in V_{\Delta^l}((\omega,x),\const{q}{2}/2)$ the local stable manifold $W_{loc}(\omega',x') \cap \pLball[\Delta,\omega]{x}{\const{q}{2}}$ is the graph of a function $\phi$ (see Section \ref{sec:TheoremAbsoluteContinuity}) with
\begin{align} \label{eq:EstimateOnRepresentation}
 \sup \left\{\Lnorm[0]{D_\xi \phi} : \xi \in \sTS{0}, \Lnorm[0]{\xi} < \const{q}{2}  \right\} \leq\frac{1}{3}.
\end{align}
Because of \rref{eq:EstimateOnRepresentation} and $\norm{W} \leq 1/2$ the submanifold $W \cap \pLball[\Delta,\omega']{x'}{\const{q}{2}}$ can be represented by a $C^1$ function $\psi_{(\omega',x')}$, i.e. there exists an open subset $O_{(\omega',x')}$ of $H_0(\omega',x')$ and a function $\psi_{(\omega',x')} : O_{(\omega',x')} \to E_0(\omega',x')$ whose graph represents $W$, i.e.
\begin{align*}
W \cap \pLball[\Delta,\omega']{x'}{\const{q}{2}} = \exp_{x'} \left( \left\{\left(\psi_{(\omega',x')}(\eta),\eta\right):\eta \in O_{(\omega',x')} \right\}\right).
\end{align*}
Then we have the following proposition \citep[Corollary II.7.1]{Katok86}.

\begin{proposition} \label{prop:EstimateOnDerivativeOfPsi}
For every $z \in \Delta^l_\omega \cap \pLball[\Delta,\omega]{x}{\const{q}{2}}$ we have
\begin{align*}
\sup_{\eta \in O_{(\omega,z)}} \LnormAt[0]{z}{D_\eta\psi_{(\omega,z)}} \leq 2(\norm{W} + R(\const{q}{2})).
\end{align*}
\end{proposition}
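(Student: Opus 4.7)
The plan is to transfer everything to $\R^d$ via the basis maps $A(\omega,x)$ and $A(\omega,z)$. These are isometries from $(\R^d,\abs{\cdot})$ onto $(T_\cdot\R^d,\LnormAt[0]{\cdot}{\cdot})$ that carry the splitting $\R^k\oplus\R^{d-k}$ onto $\sTS{0}\oplus\uTS{0}$ at the respective base point; in these coordinates the transition between the two frames is exactly $I := I_{(\omega,x),(\omega,z)}$ from Lemma~\ref{lem:BoundOnIdentity}. Since the estimate to be proved is about Lyapunov norms of derivatives, this identification is the natural reduction.

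First I would set $\tilde W := A(\omega,x)^{-1}\exp_x^{-1}W$ and $\tilde W_z := A(\omega,z)^{-1}\exp_z^{-1}W$. From the definition of $I$ it follows that $\tilde W_z = I(\tilde W)$. The Lyapunov-isometry property of $A(\omega,x)$ implies that $\tilde W$ is the graph of a $C^1$ map $\tilde\psi:\R^{d-k}\supset U\to\R^k$ whose pointwise Euclidean operator norm equals $\LnormAt[0]{x}{D\psi_{(\omega,x)}}$, so $\sup\abs{D\tilde\psi}\leq\norm{W}\leq 1/2$. Likewise $\tilde W_z$ is the graph of a $C^1$ map $\tilde\psi_z$ with $\sup\abs{D\tilde\psi_z} = \sup\LnormAt[0]{z}{D\psi_{(\omega,z)}}$, so it is enough to bound $\abs{D\tilde\psi_z}$ by $2(\norm{W}+R(\const{q}{2}))$.

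Next I would write $DI = \id+E$ pointwise, with $\abs{E}\leq R:=R(\const{q}{2})$ by Lemma~\ref{lem:BoundOnIdentity}, and decompose $E$ into $2\times 2$ blocks $E_{ss},E_{su},E_{us},E_{uu}$ according to the coordinate splitting, each of operator norm at most $R$. Pushing forward a tangent vector $(D\tilde\psi\cdot v,v)$ of $\tilde W$ by $DI$ yields a tangent vector of $\tilde W_z$ with components
\begin{align*}
P v = \bigl((\id+E_{ss})D\tilde\psi + E_{su}\bigr)v,\qquad (\id+Q)v = \bigl(E_{us}D\tilde\psi + \id + E_{uu}\bigr)v.
\end{align*}
Reading the slope of $\tilde W_z$ from these formulas gives $D\tilde\psi_z = P(\id+Q)^{-1}$. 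Since $R<1/5$ and $\norm{W}\leq 1/2$, $\abs{Q}\leq R(1+\norm{W})<1$, so $\id+Q$ is invertible by Neumann, $\abs{P}\leq (1+R)\norm{W}+R\leq (1+R)(\norm{W}+R)$, and the resulting prefactor $(1+R)/(1-R(1+\norm{W}))$ is bounded by $2$. Translating back via $A(\omega,z)$ yields the desired estimate.

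The main obstacle is simply the bookkeeping in the first step: one has to check that $A(\omega,\cdot)$ really is a Lyapunov-isometry sending the coordinate splitting to the stable/unstable splitting, that the coordinate transition realising the change of frames between $x$ and $z$ is precisely the map $I$ appearing in Lemma~\ref{lem:BoundOnIdentity}, and that the block decomposition is taken with respect to this splitting. Once the identifications are set up consistently, the derivative bound reduces to the elementary algebra sketched above and the constants from the hypotheses $R<1/5$, $\norm{W}\leq 1/2$ leave enough room for the factor of $2$ in the conclusion.
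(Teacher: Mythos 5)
Your proposal is correct and follows essentially the same route as the paper: conjugating by the Lyapunov-orthonormal frames $A(\omega,x)$ and $A(\omega,z)$, identifying the change of frames with $I_{(\omega,x),(\omega,z)}$, and exploiting Lemma \ref{lem:BoundOnIdentity} together with $R(\const{q}{2})<\tfrac15$ and $\norm{W}\leq\tfrac12$. The only difference is that you carry out explicitly the block/graph-transform computation (with the Neumann-series bound giving the factor $\tfrac{1+R}{1-R(1+\norm{W})}\leq 2$) which the paper outsources to the proof of \citep[Proposition II.7.2]{Katok86}.
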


\begin{proof}
Let us define 
\begin{align*}
\hat \psi_{(\omega,z)} := A(\omega,z)^{-1} \circ \psi_{(\omega,z)} \circ A(\omega,z)|_{\Span(e_{k+1},\dots, e_d)},
\end{align*}
where it makes sense. Then one can easily check that with
\begin{align*}
I_{(\omega,x),(\omega,z)} = \left(I^s_{(\omega,x),(\omega,z)},I^u_{(\omega,x),(\omega,z)}\right): \R^k \times \R^{d-k} \to \R^k \times \R^{d-k}
\end{align*}
we have for those $v \in \R^k$ where it makes sense
\begin{align*}
\hat \psi_{(\omega,z)} \circ I^u_{(\omega,x),(\omega,z)}(\hat\psi(v),v) = I^s_{(\omega,x),(\omega,z)}(\hat\psi(v),v)
\end{align*}
with
\begin{align*}
\hat\psi := A(\omega,x)^{-1} \circ \psi \circ A(\omega,x)|_{\Span(e_{k+1},\dots, e_d)},
\end{align*}
where $\psi: H_0(\omega,x) \to E_0(\omega,x)$ is the function that represents the transversal manifold $W$ by definition. Now the proof of \citep[Proposition II.7.2]{Katok86} combined with Lemma \ref{lem:BoundOnIdentity} and the fact that $R(\const{q}{2}) < 1/5$ and $\norm{W} \leq 1/2$ yields
\begin{align*}
\sup_{v \in A(\omega,z)^{-1}\left(O_{(\omega,z)}\right)} \abs{D_v \hat \psi_{(\omega,z)}} \leq 2(\norm{W} + R(\const{q}{2})).
\end{align*}
Since $A(\omega,z)$ is an orthogonal map from $(\R^d, \abs{\cdot})$ to $(T_z\R^d, \LnormAt[0]{z}{\cdot})$ we immediately get
\begin{align*}
\sup_{\eta \in O_{(\omega,z)}} \LnormAt[0]{z}{D_\eta \psi_{(\omega,z)}} \leq 2(\norm{W} + R(\const{q}{2})).
\end{align*}
\end{proof}

Now choose constants $\const{q}{3}_C$ and $\eps_C$ such that 
\begin{align*}
&0 < \eps_C < \frac{1}{2},\\
&0 < \const{q}{3}_C < \min\left\{\frac{\const{q}{1}_C}{16A};\const{q}{2}\right\},\\
&\eps_C + R(\const{q}{3}_C) < \frac{C}{2}
\end{align*}
and consider a transversal manifold $W$ of $\pLball[\Delta,\omega]{x}{\const{q}{3}_C}$ with $\norm{W} \leq \eps_C$. Choose a point $z \in  \Delta^l_\omega \cap \pLball[\Delta,\omega]{x}{\const{q}{3}_C/2} $ be such that $W_{loc}(\omega, z) \cap W \cap \pLball[\Delta,\omega]{x}{\const{q}{3}_C} \neq \emptyset$. This intersection consists by transversality of exactly one point, which we will denote by $y$. As usual denote $(\xi_0,\eta_0) = \exp_z^{-1}(y)$. Let $\psi_{(\omega,z)}$ and $O_{(\omega,z)}$ be as constructed before. Then we define 
\begin{align} \label{eq:DefinitionOfqCzW}
q_C(z,W) &:= \sup \bigg\{\delta_0 : \delta_0 \leq \frac{\const{q}{3}_C}{4}, \uLball[\eta_0]{\delta_0}{z,0} \subseteq O_{(\omega,z)} \cap \uLball{\const{q}{3}_C}{z,0} \notag\\
&\hspace{20ex}\text{ and } \exp_z\left(\Lball[(\xi_0,\eta_0)]{\delta_0}{z,0}\right) \subseteq \pLball[\Delta,\omega]{x}{\const{q}{3}_C}\bigg\}.
\end{align}
Lemma \ref{lem:BoundOnIdentity} guarantees that the first inclusion holds for positive $\delta_0$, whereas since $W$ is a submanifold of $\pLball[\Delta,\omega]{x}{\const{q}{3}_C}$ and because of \rref{eq:EstimateOnRepresentation} this is also true for the second inclusion. Thus $q_C(z,W) > 0$ and one can even see that for fixed $W$ both remarks hold uniformly in $z \in \Delta^l_\omega \cap \pLball[\Delta,\omega]{x}{\const{q}{3}_C/2}$. By definition of $\psi_{(\omega,z)}$ we clearly have $\psi_{(\omega,z)}(\eta_0) = \xi_0$ and for $0< \delta_0 < q_C(z,W)$ we get by Proposition \ref{prop:EstimateOnLnorms}
\begin{align*}
 \LnormAt[0]{z}{(\xi_0,\eta_0)} &= \LnormAt[0]{z}{\exp_z^{-1}(y) - \exp_z^{-1}(z)} \leq 2A \LnormAt[0]{x}{\exp_x^{-1}(y) - \exp_x^{-1}(z)}\\
&\leq 2A \left(\LnormAt[0]{x}{\exp_x^{-1}(y)} + \LnormAt[0]{x}{\exp_x^{-1}(z)}\right) \leq 4A \const{q}{3}_C \leq \frac{1}{4} \const{q}{1}_C
\end{align*}
and similarly since $\exp_z(\psi_{(\omega,z)}(\eta)) \in \pLball[\Delta,\omega]{x}{\const{q}{3}_C}$ for each $\eta \in \uLball[\eta_0]{\delta_0}{z,0}$
\begin{align*}
\sup_{v \in \uLball[\eta_0]{\delta_0}{z,0}} \LnormAt[0]{z}{\psi_{(\omega,z)}(v)} &\leq \sup_{v \in \uLball[\eta_0]{\delta_0}{z,0}} \LnormAt[0]{z}{\psi_{(\omega,z)}(v) - \exp_z^{-1}(z)} \\
&\leq 2A \sup_{v \in \uLball[\eta_0]{\delta_0}{z,0}} \LnormAt[0]{x}{\exp_x^{-1}(\exp_z(\psi_{(\omega,z)}(v))) - \exp_x^{-1}(z)}\\
&\leq 4A \const{q}{3}_C \leq \frac{1}{4} \const{q}{1}_C.
\end{align*}
Finally from Proposition \ref{prop:EstimateOnDerivativeOfPsi} and choice of $\const{q}{3}_C$ we get
\begin{align*}
 \sup_{\eta \in \uLball[\eta_0]{\delta_0}{z,0}} \LnormAt[0]{z}{D_\eta\psi_{(\omega,z)}} \leq 2(\norm{W} + R(\const{q}{3}_C)) \leq 2(\eps_C + R(\const{q}{3}_C))\leq C.
\end{align*}
Thus for $q = \const{q}{1}_C$, $0 < \delta_0 < q_C(z,W)$ and $\psi_0 := \psi_{(\omega,z)}|_{\uLball[\eta_0]{\delta_0}{z,0}}$ the assumptions of Theorem \ref{thm:ExsitenceOfPsi} are fullfilled and we obtain for each $n \geq 0$  mappings
\begin{align*}
\psi_{(\omega,z),n} : \uLball[\eta_n]{\delta_n'}{z,n} \to H_n(\omega,z),
\end{align*}
which satisfy
\begin{align*}
\psi_{(\omega,z),n}(\eta_n) &= \xi_n,\\
\graph(\psi_{(\omega,z),n+1}) &\subseteq F_{(\omega,z),n}(\graph(\psi_{(\omega,z),n})),
\end{align*}
and the estimates
\begin{align*}
\max_{\eta \in \uLball[\eta_{n}]{\delta'_{n}}{n}} \LnormAt[n]{z}{\psi_{(\omega,z),n}(\eta)} &\leq \left(\frac{1}{4} + C\right) qe^{(a+7\eps)n},\\
\max_{\eta \in \uLball[\eta_{n}]{\delta'_{n}}{n}} \LnormAt[n]{z}{D_{\eta}\psi_{(\omega,z),n}} &\leq Ce^{-7d\eps n}.
\end{align*}

With this sequence of maps we are able to define the $(d-k)$-dimensional submanifold of $\R^d$, which will play an important role in the following. For any $n \geq 0$ and $0< r < q_C(z,W)e^{(a + 11\eps)n}$ let us define
\begin{align*}
 \tilde W_n(z,y,r) := \exp_{f^n_\omega z} \left\{ (\psi_{(\omega,z),n}(\eta),\eta) : \eta \in \uLball[\eta_n]{r}{z,n} \right\}.
\end{align*}
In particular, for $0 < \delta_0 < q_C(z,W)$ and $\delta'_n = \delta_0 e^{(a+11\eps)n}$ we can consider the submanifolds $\tilde W_n(z,y,\delta'_n)$. By Theorem \ref{thm:ExsitenceOfPsi} we immediately get
\begin{align} \label{eq:InvarianceOfRepresentation}
\tilde W_n(z,y,\delta'_n) \subset f_n(\omega) \left(\tilde W_{n-1}(z,y,\delta'_{n-1})\right),
\end{align}
which is a very important property for the future. Let us emphasize that if one uses the Euclidean metric on the tangent spaces instead of the Lyapunov metric then this property is not true in general anymore.

\subsection{Projection Lemmas} \label{sec:ProjectionLemmas}

For $n \geq 0$, $z' \in f^n_\omega(W)$ and $q > 0 $ we will denote by $Q(z',q)$ the closed ball in $f^n_\omega(W)$ of radius $q$ centered at $z'$ with respect to the induced Euclidean metric on $f^n_\omega(W)$. For fixed $\delta_0 > 0$ let us define 
\begin{align*}
d_0 := \frac{\delta_0}{12 A}  \quad \text{ and } \quad d_n := d_0 e^{(a+9\eps)n}
\end{align*}
for $n \geq 0$. Then we have the following proposition, which compares the Euclidean balls in $f^n_\omega(W)$ with the submanifolds constructed at the end of the previous section. As before let $z \in  \Delta^l_\omega \cap \pLball[\Delta,\omega]{x}{\const{q}{3}_C/2} $ be such that $W_{loc}(\omega, z) \cap W \cap \pLball[\Delta,\omega]{x}{\const{q}{3}_C} \neq \emptyset$ and denote this intersection by $y$. Further let $0 < \delta_0 < q_C(z,W)$. Then we have \citep[Porposition II.8.1]{Katok86}.

\begin{proposition} \label{prop:ComparsionWandQ}
For any $n\geq 0$ we have
\begin{enumerate}
\item [a)] if $z' \in \tilde W_n(z,y,\frac{1}{2}\delta'_n)$ then $Q(z', 3d_n) \subset \tilde W_n(z,y,\frac{3}{4}\delta'_n)$;
\item [b)] if $z' \in \tilde W_n(z,y,\frac{3}{4}\delta'_n)$ then $Q(z', 3d_n) \subset \tilde W_n(z,y, \delta'_n)$.
\end{enumerate}
\end{proposition}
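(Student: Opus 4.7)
The plan is to pass to the local coordinates provided by $\exp^{-1}_{f^n_\omega z}$, estimate the unstable component of any point of $Q(z',3d_n)$ in the Lyapunov norm, and then invoke uniqueness of the graph representation. The arithmetic has been set up so that the Euclidean radius $3d_n$ translates to Lyapunov radius exactly $\tfrac14\delta'_n$, giving the two buffers $\tfrac12\delta'_n\to\tfrac34\delta'_n$ and $\tfrac34\delta'_n\to\delta'_n$.

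Fix $z''\in Q(z',3d_n)$ and write $(\xi',\eta'):=\exp^{-1}_{f^n_\omega z}(z')$ and $(\xi'',\eta''):=\exp^{-1}_{f^n_\omega z}(z'')$. Since $\exp_{f^n_\omega z}$ is a translation on $\R^d$ and the intrinsic distance in $f^n_\omega(W)$ bounds the chord distance from above, $|(\xi'',\eta'')-(\xi',\eta')|\le 3d_n$. Lemma \ref{lem:EstimatesOnLnorm}(iii) converts this to the Lyapunov metric:
\begin{align*}
\LnormAt[n]{z}{(\xi''-\xi',\eta''-\eta')}\;\le\; Ae^{2\eps n}\cdot 3d_n \;=\; Ae^{2\eps n}\cdot\frac{\delta_0}{4A}e^{(a+9\eps)n} \;=\; \tfrac14\delta'_n .
\end{align*}
Because $E_n(\omega,z)$ and $H_n(\omega,z)$ are orthogonal for $\Linner{~}{~}$, the projection onto $H_n(\omega,z)$ is a non-expansion and $\LnormAt[n]{z}{\eta''-\eta'}\le \tfrac14\delta'_n$. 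A triangle inequality with $\LnormAt[n]{z}{\eta'-\eta_n}\le \tfrac12\delta'_n$ in case (a) gives $\eta''\in \uLball[\eta_n]{\tfrac34\delta'_n}{z,n}$; with $\LnormAt[n]{z}{\eta'-\eta_n}\le \tfrac34\delta'_n$ in case (b) one gets $\eta''\in\uLball[\eta_n]{\delta'_n}{z,n}$.

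The remaining and main step is to check that $\xi''=\psi_{(\omega,z),n}(\eta'')$, i.e.\ that $z''$ actually lies on the graph parameterized by $\psi_{(\omega,z),n}$ rather than on some other sheet of $f^n_\omega(W)$ above $\eta''$. Here one uses that, by its construction together with the iterated invariance \rref{eq:InvarianceOfRepresentation}, $\tilde W_n(z,y,\delta'_n)$ is a $C^1$ submanifold contained in $f^n_\omega(W)$ and has the same dimension $d-k$, hence is open in $f^n_\omega(W)$. Combined with the bound $Ce^{-7d\eps n}<1$ on the Lyapunov norm of $D\psi_{(\omega,z),n}$ from \rref{eq:DpsiEst}, which makes the tangent plane at $z'$ transverse to $E_n(\omega,z)$, the unstable projection $\pi_{H_n(\omega,z)}\circ\exp^{-1}_{f^n_\omega z}$ is a local diffeomorphism from $f^n_\omega(W)$ onto the $\eta$-factor in a neighborhood of $z'$. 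Connecting $z'$ to $z''$ by a path in $f^n_\omega(W)$ of length at most $3d_n$, the Lyapunov-norm calculation above shows the $\eta$-projection of this path travels by at most $\tfrac14\delta'_n$ and never leaves $\uLball[\eta_n]{\delta'_n}{z,n}$; hence the whole path stays in the region of unique graph representation, forcing $z''\in\tilde W_n(z,y,r)$ with $r=\tfrac34\delta'_n$ in case (a) and $r=\delta'_n$ in case (b). The main obstacle in writing this out rigorously is precisely pinning down this uniqueness statement, while the rest is a careful bookkeeping of the Lyapunov versus Euclidean scales built into the definitions of $d_n$ and $\delta'_n$.
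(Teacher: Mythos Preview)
The paper does not give its own proof of this proposition; it simply cites \cite[Proposition~II.8.1]{Katok86}. Your direct argument is the natural one and is essentially the proof one finds there: convert the Euclidean radius $3d_n$ into Lyapunov radius $\tfrac14\delta'_n$ via Lemma~\ref{lem:EstimatesOnLnorm}(iii) and the definitions $d_0=\delta_0/(12A)$, $d_n=d_0e^{(a+9\eps)n}$, $\delta'_n=\delta_0e^{(a+11\eps)n}$, then use a connectedness argument to conclude the whole ball $Q(z',3d_n)$ stays on the graph of $\psi_{(\omega,z),n}$.

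One small point of care in your write-up: $\tilde W_n(z,y,\delta'_n)$ is the image of the \emph{closed} Lyapunov ball $\uLball[\eta_n]{\delta'_n}{z,n}$, so it is not literally open in $f^n_\omega(W)$. What you actually need (and what your argument delivers) is that every point $\gamma(t)$ on a nearly length-minimizing path from $z'$ to $z''$ has $\eta$-coordinate strictly inside that ball, since $\LnormAt[n]{z}{\eta(t)-\eta_n}\le \tfrac12\delta'_n+\tfrac14\delta'_n<\delta'_n$ (case~a), respectively $\le\tfrac34\delta'_n+\tfrac14\delta'_n\le\delta'_n$ (case~b, with arbitrarily small slack if you use a path of length $\le 3d_n+\epsilon$). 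Hence the open–closed argument on the parameter interval goes through. With that clarification your proof is correct.
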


\begin{proof}
This is \citep[Proposition II.8.1]{Katok86}.
\end{proof}

Let $F$ be a $k$-dimensional subspace of $T_{f^n_\omega z}\R^d$ transversal to the subspace $H_n(\omega,z)$ such that
\begin{align} \label{eq:AngleOfSubpaces}
\gamma(F,H_n(\omega,z)) \geq {l'}^{-1}e^{-\eps n},
\end{align}
where $\gamma(\cdot,\cdot)$ denotes  the angle between two subspaces with respect to the Euclidean scalar product and $l'$ was fixed in the beginning of Section \ref{sec:TheoremAbsoluteContinuity}. Two examples that will be considered in the following are the Riemannian orthogonal complement $H^{\bot}_n(\omega,z)$ and $E_n(\omega,z)$, which satisfies \rref{eq:AngleOfSubpaces} because of Lemma \ref{lem:ExistenceOfl}.

Let us denote by $\pi^n_F$ the projection of $T_{f^n_\omega z}\R^d$ onto $H_n(\omega,z)$  parallel to the subspace $F$. Further let $\hat Q(z',q) := \exp_{f^n_\omega z}^{-1}(Q(z',q))$ and for $z' \in \R^d$ let $\hat z' := \exp_{f^n_\omega z}^{-1}(z')$. Then we have the following projection lemma (see \citep[Lemma II.8.1]{Katok86}), which compares the projection along the subspace $F$ of an Euclidean ball in $f^n_\omega(W)$ with an Euclidean ball in $H_n(\omega,z)$ for large $n$.

\begin{lemma} \label{lem:ProjectionLemma}
For every $\alpha \in (0,1)$  there exists $\const{N}{1} = \const{N}{1}(\alpha)$ such that for any $n \geq \const{N}{1}$, any $z' \in \tilde W(z,y,\frac{3}{4}\delta'_n)$, any $0 < q \leq 3d_n$, and any subspace $F \subset T_{f^n_\omega z}\R^d$ which satisfies \rref{eq:AngleOfSubpaces} we have
\begin{align*}
B^u_{z,n}(\pi^n_F(\hat z'),(1-\alpha)q) \subset \pi^n_F(\hat Q(z',q)) \subset B^u_{z,n}(\pi^n_F(\hat z'),(1+\alpha)q).
\end{align*}
\end{lemma}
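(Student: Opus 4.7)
The plan is to leverage the tight control over the iterated graph functions $\psi_{(\omega,z),n}$ from Theorem~\ref{thm:ExsitenceOfPsi}, together with the lower bound on the angle $\gamma(F,H_n(\omega,z))$, to show that the composition of $\pi^n_F$ with the parametrization of $\tilde W_n$ is arbitrarily close to an isometry for large $n$. First I would use Proposition~\ref{prop:ComparsionWandQ}(b): since $z'\in \tilde W_n(z,y,\tfrac{3}{4}\delta'_n)$ and $q\le 3d_n$, the ball $Q(z',q)$ is contained in $\tilde W_n(z,y,\delta'_n)$. Hence $\hat Q(z',q)$ is the image under the parametrization $\Psi_n(\eta):=(\psi_{(\omega,z),n}(\eta),\eta)$ of a compact parameter set $D\subset \uLball[\eta_n]{\delta'_n}{z,n}$, namely those $\eta$ whose $\Psi_n$-image has induced Euclidean arc-length distance at most $q$ from $\hat z'$; write $\eta_{z'}$ for the parameter of $\hat z'$.

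Next I would produce Euclidean-norm bounds. Using Lemma~\ref{lem:EstimatesOnLnorm}(iii) to pass from the Lyapunov metric to the Euclidean metric, the derivative estimate \rref{eq:DpsiEst} becomes $|D_\eta\psi_{(\omega,z),n}|\le 2AC\,e^{-(7d-2)\eps n}$. The projection decomposes as $\pi^n_F(\xi+\eta)=\Pi(\xi)+\eta$ for $\xi\in E_n(\omega,z)$, $\eta\in H_n(\omega,z)$, where $\Pi:=\pi^n_F|_{E_n(\omega,z)}$, and the standard estimate $|\pi^n_F|\le 1/\sin\gamma(F,H_n(\omega,z))\le (\pi/2)\,l'\,e^{\eps n}$ gives the same bound on $|\Pi|$. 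Therefore $G:=\pi^n_F\circ\Psi_n$ takes the form $G(\eta)=\eta+\Pi(\psi_{(\omega,z),n}(\eta))$, with
\[
|D_\eta G - \id|\;\le\;|\Pi|\cdot|D_\eta\psi_{(\omega,z),n}|\;\le\;\pi A C l'\,e^{-(7d-3)\eps n}\;=:\;\delta(n),
\]
so $G$ is bilipschitz with constants $1\pm\delta(n)$ and $G(\eta_{z'})=\pi^n_F(\hat z')$.

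The last step is to sandwich $D$ between Euclidean balls around $\eta_{z'}$ in $H_n(\omega,z)$ and push this through $G$. Pulling back the ambient Euclidean metric via $\Psi_n$ yields the Riemannian metric with length element $\sqrt{|v|^2+|D_\eta\psi_{(\omega,z),n}(v)|^2}$, so integrating along the straight-line path in parameter space gives
\[
|\eta-\eta_{z'}|\;\le\;d_W(\Psi_n(\eta),\hat z')\;\le\;\sqrt{1+\epsilon(n)^2}\,|\eta-\eta_{z'}|
\]
with $\epsilon(n):=2AC\,e^{-(7d-2)\eps n}$. This sandwiches $D$ between the Euclidean balls of radii $q/\sqrt{1+\epsilon(n)^2}$ and $q$ around $\eta_{z'}$; applying $G$ with its bilipschitz bounds yields the two-sided inclusion claimed in the lemma, provided $n$ is large enough that $\delta(n)\le\alpha$ and $(1-\delta(n))/\sqrt{1+\epsilon(n)^2}\ge 1-\alpha$. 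Since neither $\delta(n)$ nor $\epsilon(n)$ depends on $z'$, $q$, or $F$ (only on $\alpha$ and the fixed global constants $A,C,l'$), this produces a uniform threshold $\const{N}{1}(\alpha)$.

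The main obstacle is bookkeeping three competing exponential rates: the Lyapunov-metric decay $e^{-7d\eps n}$ of $D\psi_{(\omega,z),n}$, the growth $e^{2\eps n}$ from converting to the Euclidean norm, and the growth $e^{\eps n}$ from the inverse-sine bound on the projection. The factor $7d$ (rather than a dimension-independent constant) in \rref{eq:DpsiEst}, strengthened in Theorem~\ref{thm:ExsitenceOfPsi} for exactly this purpose, ensures the net rate $e^{-(7d-3)\eps n}$ remains exponentially decaying regardless of the ambient dimension, simultaneously controlling both the distortion of $G$ and the distortion of the pulled-back Riemannian metric.
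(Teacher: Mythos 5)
The paper does not actually prove this lemma---it defers entirely to \citep[Lemma II.8.1]{Katok86}---so there is nothing to match line by line; your argument is in the spirit of the Katok--Strelcyn proof and its overall structure (graph parametrization $\Psi_n$, decay of $D\psi_{(\omega,z),n}$ beating the $e^{2\eps n}$ metric-conversion and the $e^{\eps n}$ angle losses, so that $\pi^n_F\circ\Psi_n$ is a near-isometry) is sound and gives the required uniform threshold $\const{N}{1}(\alpha)$. Two points need patching, though neither changes the structure. First, your formula for the pulled-back length element, $\sqrt{\abs{v}^2+\abs{D_\eta\psi_{(\omega,z),n}(v)}^2}$, tacitly assumes $E_n(\omega,z)\perp H_n(\omega,z)$ in the Euclidean inner product; this holds only at $n=0$ (by definition $H_0=E_0^{\bot}$), while for $n\geq 1$ the splitting is generally non-orthogonal and only the angle bound of Lemma \ref{lem:ExistenceOfl} is available. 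The fix is to bound $\abs{v+D_\eta\psi_{(\omega,z),n}(v)}$ by the triangle inequality, replacing your factors $1$ and $\sqrt{1+\epsilon(n)^2}$ by $1-\epsilon(n)$ and $1+\epsilon(n)$; the conclusion is unaffected since $\epsilon(n)\to 0$. Second, ``applying $G$ with its bilipschitz bounds'' settles only the right-hand inclusion; for the left-hand one (a ball contained in the image) you additionally need (i) that the Euclidean parameter ball of radius $\leq 3d_n$ around $\eta_{z'}$ lies inside the domain $\uLball[\eta_n]{\delta'_n}{z,n}$ of $\psi_{(\omega,z),n}$ --- this is exactly where $d_0=\delta_0/(12A)$ and Lemma \ref{lem:EstimatesOnLnorm} iii) enter, since $3Ae^{2\eps n}d_n=\tfrac14\delta'_n$ while $z'\in\tilde W_n(z,y,\tfrac34\delta'_n)$ puts $\eta_{z'}$ within $\tfrac34\delta'_n$ of $\eta_n$ --- and (ii) a surjectivity step, e.g.\ the standard fixed-point argument for a perturbation of the identity on a convex set ($\abs{D_\eta G-\id}\leq\delta(n)$ on a ball implies $G$ of that ball contains the concentric ball shrunk by $1-\delta(n)$). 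Also note that paths in $f^n_\omega(W)$ of length at most $q\leq 3d_n$ issued from $z'$ stay in $Q(z',3d_n)\subset\tilde W_n(z,y,\delta'_n)$ by Proposition \ref{prop:ComparsionWandQ}, which is what justifies doing all comparisons inside the graph piece; you use this implicitly and it deserves one sentence. With these additions your proof is complete and, unlike the paper, self-contained.
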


\begin{proof}
This is \citep[Lemma II.8.1]{Katok86}.
\end{proof}

\begin{remark}
The quantity $\const{N}{1}(\alpha)$ can be assumed to be decreasing in $\alpha$. 
\end{remark}

As an immediate consequence of this lemma and the properties of the function $\psi_{(\omega,z),n}$ we get the following corollary.

\begin{corollary} \label{cor:ExistenceOfPsiPiN}
There exists a number $\const{N}{2}$ such that for any $n \geq \const{N}{2}$ and each $z' \in \tilde W(z,y,\frac{3}{4}\delta'_n)$ there exists a $C^1$ map $\Psi_{\pi,n}: B^u_{z,n}\left(\pi^n_F(\hat z'), \frac{8}{3}d_n\right) \to H_n(\omega,z)$ such that
\begin{align*}
\hat Q\left(z', \frac{7}{3}d_n\right) \subset \graph(\Psi_{\pi,n}) \subset \hat Q(z',3d_n)
\end{align*}
and the derivative satisfies for any $y' \in B^u_{z,n}\left(\pi^n_F(\hat z'), \frac{8}{3}d_n\right)$
\begin{align*}
 \abs{D_{y'}\Psi_{\pi,n}} \leq 2 A e^{-5\eps n}.
\end{align*}
\end{corollary}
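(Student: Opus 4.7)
The plan is to combine Proposition~\ref{prop:ComparsionWandQ}(b) with Lemma~\ref{lem:ProjectionLemma}, exploiting the fact that by construction $\tilde W_n(z,y,\delta'_n)$ is the graph of $\psi_{(\omega,z),n}$ over the unstable disc $\uLball[\eta_n]{\delta'_n}{z,n}$. First I would invoke Proposition~\ref{prop:ComparsionWandQ}(b) for $z' \in \tilde W_n(z,y,\tfrac{3}{4}\delta'_n)$ to obtain $Q(z',3d_n) \subset \tilde W_n(z,y,\delta'_n)$, so that $\hat Q(z',3d_n)$ sits inside that graph. Then I would apply Lemma~\ref{lem:ProjectionLemma} twice with carefully chosen $\alpha$: with $q=3d_n$ and $\alpha = 1/9$, for which $(1-\alpha)q = \tfrac{8}{3}d_n$, to obtain $B^u_{z,n}(\pi^n_F(\hat z'),\tfrac{8}{3}d_n) \subset \pi^n_F(\hat Q(z',3d_n))$; and with $q=\tfrac{7}{3}d_n$ and $\alpha = 1/7$, for which $(1+\alpha)q = \tfrac{8}{3}d_n$, to obtain $\pi^n_F(\hat Q(z',\tfrac{7}{3}d_n)) \subset B^u_{z,n}(\pi^n_F(\hat z'),\tfrac{8}{3}d_n)$. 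Both inclusions hold for $n \geq \const{N}{2} := \max\{\const{N}{1}(1/9),\const{N}{1}(1/7)\}$.

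Next I would construct $\Psi_{\pi,n}$ as a reparametrization of this graph along the direction of projection. Because $F$ is transversal to $H_n(\omega,z)$, one can write $F = \{(\xi,L\xi): \xi \in E_n(\omega,z)\}$ for a unique linear $L: E_n(\omega,z) \to H_n(\omega,z)$, so that $\pi^n_F(\xi,\eta) = \eta - L\xi$. Define $G(\eta) := \eta - L\,\psi_{(\omega,z),n}(\eta)$ on the disc $\uLball[\eta_n]{\delta'_n}{z,n}$; its derivative is $DG(\eta) = \id - L\circ D\psi_{(\omega,z),n}(\eta)$. The angle estimate \rref{eq:AngleOfSubpaces} yields a Euclidean bound $\abs{L} \leq l' e^{\eps n}$, which via Lemma~\ref{lem:EstimatesOnLnorm}(iii) transforms into a Lyapunov bound of order $e^{3\eps n}$; combined with $\LnormAt[n]{z}{D\psi_{(\omega,z),n}} \leq Ce^{-7d\eps n}$ from Theorem~\ref{thm:ExsitenceOfPsi}, this forces $L\circ D\psi_{(\omega,z),n}$ to be a strong contraction for $n\geq \const{N}{2}$. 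Hence $G$ is a $C^1$ diffeomorphism onto its image by the inverse function theorem, and I would set $\Psi_{\pi,n}(\tilde\eta) := \psi_{(\omega,z),n}(G^{-1}(\tilde\eta))$ on $B^u_{z,n}(\pi^n_F(\hat z'),\tfrac{8}{3}d_n)$. Chaining the two projection inclusions with the graph inclusion for $\hat Q(z',3d_n)$ then produces the desired sandwich $\hat Q(z',\tfrac{7}{3}d_n) \subset \graph(\Psi_{\pi,n}) \subset \hat Q(z',3d_n)$.

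The main obstacle is the derivative bound $\abs{D_{y'}\Psi_{\pi,n}} \leq 2Ae^{-5\eps n}$, which must overcome both the Euclidean-to-Lyapunov conversion factor $Ae^{2\eps n}$ in Lemma~\ref{lem:EstimatesOnLnorm}(iii) and the possible growth of $(DG)^{-1}$ coming from $L$. I would compute $D\Psi_{\pi,n} = D\psi_{(\omega,z),n}\circ (DG)^{-1}$, bound $\LnormAt[n]{z}{(DG)^{-1}} \leq 2$ via a Neumann series, and thus obtain $\LnormAt[n]{z}{D\Psi_{\pi,n}} \leq 2Ce^{-7d\eps n}$. Converting back to the Euclidean operator norm picks up at most a factor $4Ae^{2\eps n}$, giving $\abs{D_{y'}\Psi_{\pi,n}} \leq 8ACe^{(2-7d)\eps n}$; since $d\geq 1$ one has $2-7d \leq -5$, so choosing $C$ sufficiently small and enlarging $\const{N}{2}$ if necessary forces the right-hand side below $2Ae^{-5\eps n}$, completing the proof.
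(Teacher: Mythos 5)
Your proposal is correct in substance and follows the same skeleton as the paper's proof: Proposition \ref{prop:ComparsionWandQ} guarantees $Q(z',3d_n)\subset\tilde W_n(z,y,\delta'_n)$, the two applications of Lemma \ref{lem:ProjectionLemma} with $(q,\alpha)=(3d_n,1/9)$ and $(\tfrac{7}{3}d_n,1/7)$ give exactly the projection sandwich used in the paper, with the same threshold $\const{N}{2}=\max\{\const{N}{1}(1/9);\const{N}{1}(1/7)\}$, and the derivative bound comes from \rref{eq:DpsiEst} combined with Lemma \ref{lem:EstimatesOnLnorm}. Where you genuinely differ is the definition of $\Psi_{\pi,n}$: the paper simply restricts $\psi_{(\omega,z),n}$ to $B^u_{z,n}(\pi^n_F(\hat z'),\tfrac{8}{3}d_n)$, whereas you reparametrize along $F$ by inverting $G(\eta)=\eta-L\psi_{(\omega,z),n}(\eta)$. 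Your version is the more careful one when $F\neq E_n(\omega,z)$ (as in the later application with $F=H_n(\omega,z)^\bot$ in Lemma \ref{lem:EstimateOnTheVolume}), since it really parametrizes $\hat W_n$ over its $\pi^n_F$-image; be aware, though, that with the literal convention $\graph(\Psi_{\pi,n})=\{(\Psi_{\pi,n}(\tilde\eta),\tilde\eta)\}$ in the splitting $E_n(\omega,z)\oplus H_n(\omega,z)$ your graph is not the corresponding piece of $\hat W_n$ — the sandwich is exact if the graph is read with respect to $F\oplus H_n(\omega,z)$, i.e. via the parametrization $\tilde\eta\mapsto\bigl(\psi_{(\omega,z),n}(G^{-1}\tilde\eta),G^{-1}\tilde\eta\bigr)$. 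The paper's restriction-based proof carries the mirror-image imprecision (it mixes $\pi^n_F$ with the projection parallel to $E_n(\omega,z)$), so this is not held against you.

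The one point you should tighten is the final constant. The corollary asserts the bound $2Ae^{-5\eps n}$ for the $C\in(0,1)$ that was fixed once and for all in Section \ref{sec:ChoiceOfDelta}; you are not free to ``choose $C$ sufficiently small'' inside this corollary. Your chain yields $8ACe^{(2-7d)\eps n}$: for $d\geq 2$ the surplus decay $e^{(7-7d)\eps n}$ lets you absorb the extra factor by enlarging $\const{N}{2}$, which the statement permits, but for $d=1$ (where $2-7d=-5$ exactly, and the only case where this matters) the stated bound would force $C\leq 1/4$. The superfluous factors come entirely from the Neumann-series bound on $(DG)^{-1}$ and an extra factor $2$ in the norm conversion; they disappear if, as in the paper, one takes $\Psi_{\pi,n}=\psi_{(\omega,z),n}|_{B^u_{z,n}(\pi^n_F(\hat z'),8d_n/3)}$, for which $\abs{D_{y'}\Psi_{\pi,n}}\leq 2Ae^{2\eps n}\,\LnormAt[n]{z}{D_{y'}\Psi_{\pi,n}}\leq 2Ae^{2\eps n}Ce^{-7d\eps n}\leq 2Ae^{-5\eps n}$ follows at once from $C<1$ and $d\geq 1$.
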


\begin{proof}
Because of Proposition \ref{prop:ComparsionWandQ} the function $\psi_{(\omega,z),n}$ is well defined on $\pi_F^n\left(\hat Q(z',3d_n\right)$. Thus by Lemma \ref{lem:ProjectionLemma} there exists $\const{N}{2} := \max \left\{\const{N}{1}(1/9); \const{N}{1}(1/7)\right\}$ such that for $n \geq \const{N}{2}$ we have
\begin{align*}
\pi^n_F\left(\hat Q\left(z', \frac{7}{3}d_n\right)\right) \subset B^u_{z,n}\left(\pi^n_F(\hat z'),\frac{8}{3}d_n\right) \subset \pi^n_F(\hat Q(z',3d_n)).
\end{align*}
Thus we can define $\Psi_{\pi,n} := \psi_{(\omega,z),n}|_{B^u_{z,n}\left(\pi^n_F(\hat z'),\frac{8}{3}d_n\right)}$, which satisfies because of Lemma \ref{lem:EstimatesOnLnorm} and \rref{eq:DpsiEst} for any $y' \in B^u_{z,n}\left(\pi^n_F(\hat z'), \frac{8}{3}d_n\right)$
\begin{align*}
\abs{D_{y'}\Psi_{\pi,n}} \leq 2Ae^{2\eps n} \LnormAt{z}{D_{y'}\Psi_{\pi,n}} \leq 2Ae^{-5\eps n}.
\end{align*}
\end{proof}

For $n \geq 0$ let us denote by $\lambda_n$ and $\hat \lambda_n$ the $(d-k)$-dimensional Riemannian volume on $\tilde W(z,y,\delta'_n)$ and $\hat W(z,y,\delta'_n):= \exp_{f^n_\omega z}^{-1}\left(W(z,y,\delta'_n)\right)$ respectively. For $z' \in \tilde W(z,y,\frac{3}{4}\delta'_n)$ and $\theta \in (0,1/6)$ let
\begin{align*}
A_n(z',\theta) := \left\{ y' \in f^n_\omega(W) : 2d_n(1-\theta) \leq \tilde d(y',z') \leq 2d_n \right\}
\end{align*}
the $\theta$-boundary of $Q(z',2d_n)$, where $\tilde d$ denotes the induced Euclidean metric on $f^n_\omega(W)$. By Proposition \ref{prop:ComparsionWandQ} we get that $A(z',\theta) \subset \tilde W_n(z,y,\delta'_n)$ and thus $\lambda_n(A_n(z',\theta))$ is well defined. The next lemma compares the volume of $A_n(z',\theta)$ to $Q(z',d_n)$, this is basically \citep[Lemma II.8.2]{Katok86}.

\begin{lemma} \label{lem:EstimateOnTheVolume}
There exists a constant $\const{C}{1}$ such that for any $\theta \in (0,1/6)$ there exists a number $\const{N}{3} = \const{N}{3}(\theta)$ such that for every $n \geq \const{N}{3}$ and every $z' \in \tilde W(z,y,\frac{3}{4}\delta'_n)$ we have
\begin{align*}
\frac{\lambda_n(A_n(z',\theta))}{\lambda_n(Q(z,d_n))} \leq \const{C}{1} \theta.
\end{align*}
\end{lemma}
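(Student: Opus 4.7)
The plan is to push forward both volumes to the Euclidean subspace $H_n(\omega,z)$ via the projection $\pi^n_F$ from Lemma \ref{lem:ProjectionLemma}, where the relevant balls and annuli become Euclidean objects whose volumes are trivially computable. I will take $F := E_n(\omega,z)$, which satisfies the angle condition \rref{eq:AngleOfSubpaces} by Lemma \ref{lem:ExistenceOfl}, so that $\pi^n_F$ acts on $E_n(\omega,z)\oplus H_n(\omega,z) = T_{f^n_\omega z}\R^d$ by $(\xi,\eta)\mapsto\eta$ and is in particular injective on $\hat Q(z',3d_n)$ by Corollary \ref{cor:ExistenceOfPsiPiN}.

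The first main step is to estimate the Jacobian of $\pi^n_F$ restricted to $\hat f^n_\omega(W)$. For $n\geq\const{N}{2}$ the corollary presents $\hat Q(z',3d_n)$ as the graph of $\Psi_{\pi,n}$ over $B^u_{z,n}(\pi^n_F(\hat z'),\tfrac{8}{3}d_n)\subset H_n(\omega,z)$ with $\abs{D\Psi_{\pi,n}}\leq 2Ae^{-5\eps n}$. Parametrising the graph by $\eta\mapsto p(\eta)=\Psi_{\pi,n}(\eta)+\eta$, the Euclidean induced metric $g_{ij}(\eta)=\langle\partial_i p,\partial_j p\rangle$ differs entrywise from the identity by terms of order $e^{-5\eps n}$, so the volume distortion $\sqrt{\det g(\eta)}$ lies in $[1-\const{c}{0}e^{-5\eps n},1+\const{c}{0}e^{-5\eps n}]$ for some $\const{c}{0}=\const{c}{0}(d,A)$ once $n$ is large. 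This is the mildly delicate point, since $E_n$ and $H_n$ need not be Euclidean-orthogonal and $g$ picks up cross terms, but the exponential smallness of $\abs{D\Psi_{\pi,n}}$ absorbs the constants coming from the angle between $E_n$ and $H_n$.

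Next I would apply Lemma \ref{lem:ProjectionLemma} with $\alpha:=\theta\in(0,1/6)$ to the radii $q\in\{d_n,\,2d_n(1-\theta),\,2d_n\}$, all of which are $\leq 3d_n$ and require $n\geq\const{N}{1}(\theta)$. Writing $\hat A_n(z',\theta)=\hat Q(z',2d_n)\setminus\hat Q(z',2d_n(1-\theta))^{\mathrm{int}}$ and combining the three sandwich inclusions yields
\begin{align*}
\pi^n_F\bigl(\hat A_n(z',\theta)\bigr) &\subset B^u_{z,n}\bigl(\pi^n_F(\hat z'),2d_n(1+\theta)\bigr)\setminus B^u_{z,n}\bigl(\pi^n_F(\hat z'),2d_n(1-\theta)^2\bigr),\\
\pi^n_F\bigl(\hat Q(z',d_n)\bigr) &\supset B^u_{z,n}\bigl(\pi^n_F(\hat z'),(1-\theta)d_n\bigr).
\end{align*}
Converting between Riemannian volume on the graph and Euclidean $(d-k)$-Lebesgue measure on $H_n(\omega,z)\cong\R^{d-k}$ via the Jacobian estimate of the previous step, the ratio is dominated by
\begin{align*}
\frac{\lambda_n(A_n(z',\theta))}{\lambda_n(Q(z',d_n))} \leq \frac{1+\const{c}{0}e^{-5\eps n}}{1-\const{c}{0}e^{-5\eps n}}\cdot 2^{d-k}\cdot\frac{(1+\theta)^{d-k}-(1-\theta)^{2(d-k)}}{(1-\theta)^{d-k}}.
\end{align*}

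Finally, elementary one-variable bounds give $(1+\theta)^{d-k}-(1-\theta)^{2(d-k)}\leq 3(d-k)(7/6)^{d-k}\,\theta$ (by Bernoulli and the mean value theorem) and $(1-\theta)^{d-k}\geq(5/6)^{d-k}$ for $\theta\in(0,1/6)$. Defining $\const{N}{3}(\theta):=\max\{\const{N}{2},\,\const{N}{1}(\theta),\,N_0\}$ with $N_0$ so large that $\const{c}{0}e^{-5\eps N_0}\leq 1/2$ produces the desired bound with $\const{C}{1}$ depending only on $d$, $k$, and $A$. The principal obstacle is the Jacobian estimate of the first step; everything after that is routine volume bookkeeping for a thin Euclidean annulus.
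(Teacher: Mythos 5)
Your proposal is correct and follows the same skeleton as the paper's proof: pass to the tangent space, sandwich the projected sets between Euclidean balls in $H_n(\omega,z)$ via Lemma \ref{lem:ProjectionLemma}, compare the Riemannian volume on the graph with Lebesgue volume on $H_n(\omega,z)$, and finish with the elementary annulus-versus-ball computation, exactly as in the paper (which obtains $\const{C}{1}=12(d-k)2^{3(d-k)/2}$). The two places where you deviate are implementation choices rather than a different idea. First, you project along $F=E_n(\omega,z)$, whereas the paper takes $F=H_n(\omega,z)^\bot$; your choice has the small advantage that $\pi^n_F$ restricted to the graph of $\psi_{(\omega,z),n}$ is trivially injective, which is exactly what is tacitly needed when one combines the ``onto'' inclusion for radius $2d_n(1-\theta)$ with membership in $\hat A_n(z',\theta)$ to exclude the inner Euclidean ball (up to the projection of the intrinsic sphere, a null set, which is harmless for the volume estimate and is treated with the same level of care in the paper when \rref{eq:secondInclusion} and \rref{eq:forthInclusion} are combined). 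Second, for the volume comparison you compute the Gram determinant of the parametrization $\eta\mapsto\psi_{(\omega,z),n}(\eta)+\eta$ directly, getting distortion $1+O(e^{-5\eps n})$ from $\abs{D\Psi_{\pi,n}}\leq 2Ae^{-5\eps n}$ (and indeed the worry about the angle between $E_n$ and $H_n$ is moot, since the cross terms are controlled by Cauchy--Schwarz with the Euclidean bound on $\abs{D\Psi_{\pi,n}}$), whereas the paper invokes Corollary \ref{cor:ExistenceOfPsiPiN} together with Proposition \ref{prop:Appendix1}, accepting the cruder factor $2^{(d-k)/2}$ once $\abs{D\Psi_{\pi,n}}\leq 1$. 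Both routes need an $n$-largeness threshold (the paper's $\const{N}{3,3}$, your $N_0$), both give $\const{C}{1}$ depending only on $d-k$, and your annulus radii $2d_n(1-\theta)^2$ to $2d_n(1+\theta)$ versus the paper's $2d_n(1-\theta)(1-2\theta)$ to $2d_n$ differ only by which $\alpha$'s are fed into Lemma \ref{lem:ProjectionLemma}; note also that your reading of the denominator as the ball centered at $z'$ agrees with the paper's own proof, which silently switches from $z$ to $z'$.
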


\begin{proof} This is basically taken from \citep[Lemma II.8.2]{Katok86}, but some things are adapted to our situation. The proof bases on several applications of Lemma \ref{lem:ProjectionLemma}. Let us fix some $n \geq 0$ then since $\exp_{f^n_\omega z}$ is a simple translation on $\R^d$ it is sufficient to show
\begin{align*}
\frac{\hat \lambda_n(\hat A_n(z',\theta))}{\hat \lambda_n(\hat Q(z,d_n))} \leq \const{C}{1} \theta,
\end{align*}
where $\hat A_n(z',\theta) := \exp_{f^n_\omega z}^{-1}(A_n(z',\theta))$. Because of Lemma \ref{lem:ProjectionLemma} applied to $\alpha = 2\theta - \theta^2$, $F = H_n(\omega,z)^\bot$ and $q = d_n$ there exists $\const{N}{3,1}$ such that for all $n \geq \const{N}{3,1}$
\begin{align} \label{eq:firstInclusion}
B^u_{z,n}(\pi^n_F(\hat z'), (1-\theta)^2d_n) \subset \pi^n_F(\hat Q(\hat z', d_n)).
\end{align}
Since the exponential function $\exp_{f^n_\omega z}$ is again a simple translation on $\R^d$ we have for any $n \geq 0$
\begin{align*}
 \hat A_n(z',\theta) = \left\{\hat y' \in \hat W_n(z,y,\delta'_n) : 2d_n(1-\theta) \leq \hat d(\hat y',\hat z') \leq 2d_n \right\},
\end{align*}
where again $\hat z' := \exp_{f^n_\omega}^{-1}(z')$ and $\hat d$ denotes the induced Euclidean metric on $\hat W_n(z,y,\delta'_n)$. Thus we have (again let $F = H_n(\omega,z)^\bot$)
\begin{align} \label{eq:secondInclusion}
\pi^n_F(\hat A_n(z',\theta)) \subset B^u_{z,n}(\pi^n_F(\hat z'), 2d_n).
\end{align}
By definition of $A_n(z',\theta)$ we have
\begin{align} \label{eq:thirdInclusion}
\hat Q(z', 2d_n(1-\theta)^2) \subset \hat A_n(z',\theta)^c.
\end{align}
Let us again apply Lemma \ref{lem:ProjectionLemma} with $\alpha = \theta/(1-\theta)$, $F = H_n(\omega,z)^\bot$ and $q = 2d_n(1-\theta)^2$ then there exists $\const{N}{3,2}$ such that for any $n \geq \const{N}{3,2}$
\begin{align*}
B^u_{z,n}(\pi^n_F(\hat z'), 2d_n(1-\theta)(1-2\theta)) \subset \pi^n_F(\hat Q(z', 2d_n(1-\theta)^2))
\end{align*}
which yields by \rref{eq:thirdInclusion}
\begin{align} \label{eq:forthInclusion}
B^u_{z,n}(\pi^n_F(\hat z'), 2d_n(1-\theta)(1-2\theta)) \subset \pi^n_F(\hat A_n(z',\theta)^c).
\end{align}
Combining \rref{eq:secondInclusion} and \rref{eq:forthInclusion} we get
\begin{align} \label{eq:fifthInclusion}
 \pi^n_F(\hat A_n(z',\theta)) &\subset \big\{\eta \in H_n(\omega,z) : 2d_n(1-\theta)(1-2\theta) \leq \abs{\pi^n_F(\hat z') - \eta} \leq 2d_n \big\}=: R_n(z',\theta).
\end{align}

By Corollary \ref{cor:ExistenceOfPsiPiN} there exists $\const{N}{3,3} \geq \const{N}{2}$ such that $\abs{D_{y'}\Psi_{\pi,n}} \leq 1$ for all $n \geq \const{N}{3,3}$. Proposition \ref{prop:Appendix1} then implies that for every $n \geq \const{N}{3,3}$ and any measurable subset $V \subset \pi^n_F(\hat Q(z',\frac{7}{3}d_n))$ for $z' \in \tilde W_n(z,y,\frac{3}{4}\delta'_n)$ we have
\begin{align} \label{eq:sixthInclusion}
\vol(V) \leq \lambda_n\left(\hat Q\left(z', \frac{7}{3}d_n\right) \cap (\pi^n_F)^{-1}(V)\right) \leq 2^{(d-k)/2} \vol(V),
\end{align}
where $\vol(V)$ denotes the $(d-k)$-dimensional Lebesgue measure of $V$ induced by the Euclidean scalar product in $T_{f^n_\omega z}\R^d$. Let us observe that for $a,b \geq 0$, $p\in \N$ we have the factorization $a^p - b^p = (a-b)\sum_{i=1}^pa^{p-1}b^{i-1}$. Now combining \rref{eq:firstInclusion}, \rref{eq:fifthInclusion} and \rref{eq:sixthInclusion} we get for $n \geq \max\{\const{N}{3,1};\const{N}{3,2};\const{N}{3,3}\} =: \const{N}{3}$
\begin{align*}
\frac{\hat \lambda_n(\hat A_n(z',\theta))}{\hat \lambda_n(\hat Q(z,d_n))} &\leq \frac{\hat\lambda\left( \hat Q(z',2d_n) \cap (\pi^n_F)^{-1}(R_n(z',\theta))\right)}{\hat \lambda_n\left( \hat Q(z',d_n) \cap (\pi^n_F)^{-1}\left(B^u_{z,n}(\pi^n_F(\hat z'),d_n(1-\theta)^2)\right)\right)} \\
&\leq 2^{(d-k)/2} \frac{\vol(R_n(z',\theta))}{\vol\left(B^u_{z,n}(\pi^n_F(\hat z'),\frac{25}{36}d_n)\right)} \\
&= 2^{(d-k)/2} \frac{\vol\left(B^u_{z,n}(\pi^n_F(\hat z'),2d_n)\right)-\vol\left(B^u_{z,n}(\pi^n_F(\hat z'),2d_n(1-\theta)(1-2\theta))\right)}{\vol\left(B^u_{z,n}(\pi^n_F(\hat z'),\frac{25}{36}d_n)\right)} \\
&\leq 4\cdot2^{(d-k)/2} \frac{(2d_n)^{d-k} - (2d_n(1-\theta)(1-2\theta))^{d-k}}{d_n^{d-k}} \\
&\leq 4(d-k)2^{3(d-k)/2} \left( 1 - (1-\theta)(1-2\theta) \right)\\
&\leq 12(d-k)2^{3(d-k)/2} \theta.
\end{align*}
Now the result follows with $\const{C}{1} := 12(d-k)2^{3(d-k)/2}$.
\end{proof}

\subsection{Construction of a Covering} \label{subsec:covering}

As before let $W$ be a transversal submanifold. As before if $P \in W$ we will denote by $Q(P,h)$ the closed ball in $W$, with respect to the Euclidean metric induced on $W$, centered at $P$ and of radius $h$. When $h > 0$ is small enough, i.e. $0 < h < h_P$, the ball $Q(P,h)$ satisfies $Q(P,h) \subset W$.

Let us recall that $\Delta^l$ is a compact set and hence if $(\omega,x) \in \Delta^l$ then $\Delta^l_\omega$ is compact. Let us define for $0 < q < \delta_{\Delta^l}$ the closed ball in the tangent space of $x$ of radius $q$
\begin{align*}
\tilde U_{\Delta,\omega}^{cls}(x,q) := \exp_x\left\{\zeta \in T_x\R^d : \Lnorm{\zeta} \leq q\right\}.
\end{align*}
Then we have $\Int(\tilde U_{\Delta,\omega}^{cls}(x,q)) = \pLball[\Delta,\omega]{x}{q}$ and $\tilde U_{\Delta,\omega}^{cls}(x,q)$ is compact for any $q > 0$. Thus by choice of $\delta_{\Delta^l}$ the local stable manifolds $W_{loc}(\omega,z) \cap \tilde U_{\Delta,\omega}^{cls}(x,q)$ are compact for any $0 < q < \delta_{\Delta^l}$ and hence
\begin{align*}
\tilde \Delta^{l,cls}_\omega(x,q) = \bigcup_{z \in \Delta^l_\omega \cap \tilde U_{\Delta,\omega}^{cls}(x,q/2)} W_{loc}(\omega,z) \cap \tilde U_{\Delta,\omega}^{cls}(x,q)
\end{align*}
is compact. For $P \in W$ and $0 < h < h_P$ let us denote $D(P,h) := \tilde \Delta^{l,cls}_\omega(x,\const{q}{3}_{C}) \cap Q(P,h)$. As $W$ is relatively compact in $\R^d$, then $Q(P,h)$ is compact and consequently $D(P,h)$ is also a compact subset of $\R^d$. The next lemma now gives a covering of $D(P,h)$ by the local representation of the iterated transversal as constructed at the end of Section \ref{sec:ChoiceOfDelta}. Although this is basically \citep[Lemma II.8.3]{Katok86}, we here have a slightly weaker result, since the quantity $\delta_{P,\beta,h}$ in our theorem does depend on $h$.

\begin{lemma} \label{lem:FirstInclusion}
 For every $P \in W$, every $0 < \beta < h_P$ and $ 0 < h < h_P - \beta$ there exists $\delta_{P,\beta,h} > 0$ such that for every $0 < \delta_0 < \delta_{P,\beta,h}$ and every $n \geq 1$ there exists $\const{M}{1} = \const{M}{1}(n,P,\beta,\delta_0,h)$ and points $z_i \in \Delta^l_\omega\cap\pLball[\Delta,\omega]{x}{\const{q}{3}_C/2}$ for $1 \leq i \leq \const{M}{1}$, such that for every $i$ one has
\begin{align*}
 W_{loc}(\omega,z_i) \cap W \neq \emptyset.
\end{align*}
Let us denote $y_i = W_{loc}(\omega,z_i) \cap W$. The submanifolds $\tilde W_n(z_i,y_i,\delta'_n)$ are well defined and we have
\begin{align*}
 f^n_\omega\left(D(P,h)\right) &\subset \overline W_n(1/2) := \bigcup_{i = 1}^{\const{M}{1}} \tilde W_n\left(z_i, y_i, \frac{1}{2}\delta'_n\right)\\ &\subset \overline W_n(1) := \bigcup_{i = 1}^{\const{M}{1}} \tilde W_n\left(z_i, y_i, \delta'_n\right) \subset  f^n_\omega\left(Q(P,h + \beta)\right).
\end{align*}
\end{lemma}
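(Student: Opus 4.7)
The plan is to iterate Theorem \ref{thm:ExsitenceOfPsi} along a well-chosen finite family of centres in $\Delta^l_\omega$ and then propagate the corresponding neighbourhoods forward using the invariance property \rref{eq:InvarianceOfRepresentation}. For each $P' \in D(P,h) \subset \tilde\Delta^{l,cls}_\omega(x,\const{q}{3}_C)$, pick by definition a point $z_{P'} \in \Delta^l_\omega \cap \tilde U^{cls}_{\Delta,\omega}(x,\const{q}{3}_C/2)$ with $P' \in W_{loc}(\omega,z_{P'})$; transversality then forces $W \cap W_{loc}(\omega,z_{P'}) = \{P'\}$, so the construction at the end of Section \ref{sec:ChoiceOfDelta} applies with $(z,y)=(z_{P'},P')$ and produces, whenever $0<\delta_0<q_C(z_{P'},W)$, the sequence $\psi_{(\omega,z_{P'}),n}$ together with the submanifolds $\tilde W_n(z_{P'},P',\delta'_n)$ for every $n\geq 0$.

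\textbf{Uniform choice of $\delta_{P,\beta,h}$.} I would pick $\delta_{P,\beta,h}>0$ small enough so that for every $P' \in D(P,h)$ and every $0<\delta_0<\delta_{P,\beta,h}$ one has simultaneously (i) $\delta_0<q_C(z_{P'},W)$ and (ii) $\tilde W_0(z_{P'},P',\delta_0) \subset Q(P',\beta)$. This is possible because $D(P,h)$ is compact in $W$ (by relative compactness of $W$ and compactness of $\tilde\Delta^{l,cls}_\omega(x,\const{q}{3}_C)$), while the Lyapunov inner product $\Linner[0]{\cdot}{\cdot}$, the spaces $\sTS{0},\uTS{0}$, the graph $\psi_{(\omega,\cdot)}$ representing $W$ and hence $q_C(\cdot,W)$ from \rref{eq:DefinitionOfqCzW} all depend continuously on the centre $z$ on that compact set (Lemma \ref{lem:ContinuousDependeceOfStableSpaces}); Lemma \ref{lem:EstimatesOnLnorm}(iii) then controls the Euclidean diameter of $\tilde W_0(z,y,\delta_0)$ by a uniform multiple of $\delta_0$, which delivers (ii). The restriction $h+\beta<h_P$ ensures that $Q(P',\beta) \subset W$ for every $P' \in Q(P,h)$.

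\textbf{Finite subcover and verification of the three inclusions.} Fix $0<\delta_0<\delta_{P,\beta,h}$ and $n\geq 1$. Each set $\tilde W_n(z_{P'},P',\tfrac{1}{2}\delta'_n)$ is the image of an open Lyapunov ball under the $C^1$ embedding $\eta \mapsto \exp_{f^n_\omega z_{P'}}(\psi_{(\omega,z_{P'}),n}(\eta),\eta)$; iterating \rref{eq:InvarianceOfRepresentation} shows it is contained in $f^n_\omega(W)$ and, taking $\eta=\eta_n$, that it contains $f^n_\omega(P')$. Pulling back by the diffeomorphism $f^n_\omega$ therefore gives an open cover of the compact set $D(P,h) \subset W$, from which I extract a finite subcover indexed by $z_1,\dots,z_{\const{M}{1}}$ with $\const{M}{1}=\const{M}{1}(n,P,\beta,\delta_0,h)$, setting $y_i := W \cap W_{loc}(\omega,z_i) \in D(P,h)$. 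The first inclusion $f^n_\omega(D(P,h)) \subset \overline W_n(1/2)$ is then just the subcover, the middle inclusion $\overline W_n(1/2) \subset \overline W_n(1)$ is trivial, and for $\overline W_n(1) \subset f^n_\omega(Q(P,h+\beta))$ I iterate \rref{eq:InvarianceOfRepresentation} once more to get $\tilde W_n(z_i,y_i,\delta'_n) \subset f^n_\omega(\tilde W_0(z_i,y_i,\delta_0))$; property (ii) together with $y_i \in Q(P,h)$ and the triangle inequality then yield $\tilde W_0(z_i,y_i,\delta_0) \subset Q(y_i,\beta) \subset Q(P,h+\beta)$.

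\textbf{Main obstacle.} The delicate ingredient is (ii): this is exactly where the non-compactness of $\R^d$ forces us to work on the compact subset $D(P,h)$ rather than on $W$ as a whole, and it is the reason why $\delta_{P,\beta,h}$ must depend on $h$, unlike the corresponding deterministic statement \citep[Lemma II.8.3]{Katok86}. Once the uniform $\delta_{P,\beta,h}$ has been secured, the remainder of the argument is a routine compactness-plus-invariance bootstrap.
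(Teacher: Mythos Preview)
Your proposal is correct and follows essentially the same plan as the paper: pick $\delta_{P,\beta,h}$ small enough that (i) $\delta_0<q_C(z,W)$ uniformly over the relevant centres and (ii) $\tilde W_0(z,y,\delta_0)$ sits in a small induced-metric ball $Q(y,\cdot)$, then extract a finite subcover of the compact set $f^n_\omega(D(P,h))$ by the interiors of the $\tilde W_n(z,y,\tfrac12\delta'_n)$, and finally use the invariance \rref{eq:InvarianceOfRepresentation} to control the outer inclusion. Two small remarks: the paper does not assert that $q_C(\cdot,W)$ is continuous---only that its infimum over the relevant set of centres is strictly positive (the remark after \rref{eq:DefinitionOfqCzW})---which is all you need for (i); and the paper packages your step (ii) via an auxiliary function $t$ with $\tilde W_0(z,y,t(h))\subset Q(y,h)$, then runs a connectivity/contradiction argument for the last inclusion, whereas your direct use of $\tilde W_n\subset f^n_\omega(\tilde W_0)\subset f^n_\omega(Q(y_i,\beta))\subset f^n_\omega(Q(P,h+\beta))$ is in fact cleaner.
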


\begin{proof}
Because of Lemma \ref{lem:EstimatesOnLnorm} the Lyapunov norm can be bounded by the Euclidean Norm uniformly for all $z \in \Delta^l_\omega \cap \pLball[\Delta,\omega]{x}{\const{q}{3}_C/2}$. Thus there exists a constant $\bar h_0$ and a function $t$ depending (both only on $a,b,k,\eps,l',r'$ and $C'$) with $0<t(h) \leq h$ for $0 < h < \bar h_0$  such that for every $z \in \Delta^l_\omega \cap \pLball[\Delta,\omega]{x}{\const{q}{3}_C/2}$ with $W_{loc}(\omega,z) \cap W \neq \emptyset$ and $y = W_{loc}(\omega,z) \cap W$ we have for any $0 < h < \min\{q_C(z,W);\bar h_0; h_y\}$
\begin{align*}
\tilde W_0(z,y, t(h)) \subset Q(y,h).
\end{align*}
Let us define for fixed $P \in W$ and $0 < h < h_P$ the number
\begin{align*}
A_{P,h} = \inf\{q_C(z,W) : z \in \Delta^l_\omega \cap \pLball[\Delta,\omega]{x}{\const{q}{3}_C/2} \text{ and }  W_{loc}(\omega,z) \cap W \in Q(P,h)\}.
\end{align*}
By the remark after the definition of $q_C(z,W)$ (see \rref{eq:DefinitionOfqCzW}) this quantity is strictly positive for all $P \in W$ and $0 < h < h_P$. Now let us define
\begin{align*}
\delta_{P,\beta,h} := \min\left\{t\left(\min\left\{\frac{\beta}{4}; \bar h_0\right\}\right);A_{P,h}\right\}.
\end{align*}
and fix numbers $n \geq 1$, $0 < \beta < h_P$, $0<h<h_P - \beta$ and $0 < \delta_0 < \delta_{P,\beta,h}$. Then for the set $f^n_\omega(D(P,h))$ we can consider the open covering
\begin{align*}
\left\{\Int\tilde W_n\left(z,y,\frac{1}{2}\delta'_n\right): z \in \Delta^l_\omega \cap \pLball[\Delta,\omega]{x}{\const{q}{3}_C/2} \text{ and } W_{loc}(\omega,z) \cap W \in Q(P,h) \right\},
\end{align*}
where the interior is meant in the induced metric on the submanifold $f^n_\omega (W)$. By definition of $\delta_{P,\beta,h}$ and since $0 < \delta_0 < \delta_{P,\beta,h} \leq A_{P,h}$ these sets are well defined. Since $D(P,h)$ is compact and $f^n_\omega$ a diffeomorphism, $f^n_\omega(D(P,h))$ is compact as well. Thus for the fixed parameter $P,\beta,h,\delta_0$ and $n$ there exists a finite covering, say
\begin{align*}
\left\{\Int\tilde W_n\left(z_i,y_i,\frac{1}{2}\delta'_n\right)\right\}_{1 \leq i \leq \const{M}{1}}.
\end{align*}
Now it only remains to prove that 
\begin{align*}
\overline W_n(1) := \bigcup_{i = 1}^{\const{M}{1}} \tilde W_n\left(z_i, y_i, \delta'_n\right) \subset  f^n_\omega\left(Q(P,h + \beta)\right),
\end{align*}
which is equivalent to that for all $1 \leq i \leq \const{M}{1}$
\begin{align} \label{eq:finalInclusion}
(f^n_\omega)^{-1}\left(\tilde W_n(z_i,y_i,\delta'_n)\right) \subset Q(P, h + \beta).
\end{align}
If this would not be true, then there exists some $1 \leq i \leq \const{M}{1}$ and a point $z'$ such that $z' \in (f^n_\omega)^{-1}\left(\tilde W_n(z_i,y_i,\delta'_n)\right)$ but $z' \notin Q(P,h+\beta)$. Because of 
\begin{align}\label{eq:SecondContradiction}
\emptyset \neq (f^n_\omega)^{-1}\left(\tilde W_n(z_i,y_i,\delta'_n)\right) \cap D(P,h) \subset (f^n_\omega)^{-1}\left(\tilde W_n(z_i,y_i,\delta'_n)\right) \cap Q(P,h) 
\end{align}
and the connectivity of $(f^n_\omega)^{-1}\left(\tilde W_n(z_i,y_i,\delta'_n)\right)$ there exists a point
\begin{align} \label{eq:FirstContradiction}
z'' \in (f^n_\omega)^{-1}\left(\tilde W_n(z_i,y_i,\delta'_n)\right) \cap \partial Q(P,h+\beta).
\end{align}
By \rref{eq:InvarianceOfRepresentation}, the definition of $\delta_{P,\beta,h}$ and the properties of the function $t$ we have
\begin{align*} 
(f^n_\omega)^{-1}\left(\tilde W_n(z_i,y_i,\delta'_n)\right) \subset \tilde W_0(z_i,y_i,\delta'_0) \subset Q\left(y_i,\frac{\beta}{4}\right).
\end{align*}
This implies on the one hand via \rref{eq:FirstContradiction}
\begin{align*}
z'' \in \partial Q(P,h+\beta) \cap Q\left(y_i,\frac{\beta}{4}\right) \neq \emptyset
\end{align*}
and on the other hand via \rref{eq:SecondContradiction}
\begin{align*}
D(P,h) \cap Q\left(y_i,\frac{\beta}{4}\right) \neq \emptyset.
\end{align*}
Since the distance between $D(P,h)$ and $\partial Q(P,h+\beta)$ is because of $D(P,h) \subset Q(P,h)$ greater than $\beta$ and $\diam\left(Q\left(y_i,\frac{\beta}{4}\right)\right) \leq \frac{\beta}{2}$ this yields a contradiction and hence \rref{eq:finalInclusion} is true for all $1 \leq i \leq \const{M}{1}$, which finishes the proof.
\end{proof}

The next step of the construction of a proper covering of $f^n_\omega\left(D(P,h)\right)$ is the following lemma. The main part of the following lemma, is to give a bound on the multiplicity of the covering. Here multiplicity is defined as follows: Let $\{A_i\}_{i \in I}$ be a family of subsets of the set $X$ and let $Y \subset X$ with $Y \subset \bigcup_{i \in I} A_i$. We will say that the {\it multiplicity} of the covering $\{A_i\}_{i \in I}$ of $Y$ is not bigger than some number $L$ if for any $y \in Y$ the number of covering elements is smaller than $L$, i.e. $\#\{i\in I:y \in A_i\} \leq L$.

\begin{lemma} \label{lem:Woverlines}
Let $P \in W$, $0 < \beta < h_P$, $0 < h < h_P - \beta$ and $0 < \delta_0 < \delta_{P,\beta,h}$. Then there exists $d_0 \in (0, \delta_0)$, $L> 0$, $\const{N}{4} = \const{N}{4}(P,\beta,\delta_0, h)$ such that for every $n \geq \const{N}{4}$ there exists $\const{M}{2} = \const{M}{2}(n,P,\beta,\delta_0, h)$ and points $\{\bar z_j\}_{1 \leq j\leq \const{M}{2}} \subset f^n_\omega(W)$ with:
\begin{enumerate}
\item for every $ 1 \leq j \leq \const{M}{2}$ there exists $1 \leq i \leq \const{M}{1}$ such that $Q(\bar z_j, 2d_n) \subset \tilde W(z_i,y_i,\delta'_n)$;
\item we have
\begin{align*}
\overline W_n(1/2) &= \bigcup_{i = 1}^{\const{M}{1}} \tilde W_n\left(z_i, y_i, \frac{1}{2}\delta'_n\right) \subset \bigcup_{j=1}^{\const{M}{2}} Q(\bar z_j, d_n)\\
&\subset \bigcup_{j=1}^{\const{M}{2}} Q(\bar z_j, 2d_n) \subset \overline W_n(1) = \bigcup_{i = 1}^{\const{M}{1}} \tilde W_n\left(z_i, y_i, \delta'_n\right);
\end{align*}
\item the multiplicity of the covering of $\overline W_n(1/2)$ by the balls $Q(\bar z_j, d_n)$, $1 \leq j \leq \const{M}{2}$, is not bigger than $L$.
\end{enumerate}
\end{lemma}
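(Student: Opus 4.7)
The plan is to choose the points $\bar z_j$ as a maximal $d_n$-separated subset of $\overline{W}_n(1/2)$ (with respect to the induced Euclidean metric on $f^n_\omega(W)$). Since $\overline{W}_n(1/2)$ is a finite union of continuous images of compact sets, it is compact, so such a maximal separated set is finite, giving some $M^{(2)}$. By maximality, the balls $Q(\bar z_j, d_n)$ cover $\overline{W}_n(1/2)$, and by the separation property the balls $Q(\bar z_j, d_n/2)$ are pairwise disjoint. For (i), note that each $\bar z_j$ lies in some $\tilde W_n(z_i, y_i, \tfrac{1}{2}\delta'_n)$; Proposition~\ref{prop:ComparsionWandQ}(a) then gives $Q(\bar z_j, 3d_n) \subset \tilde W_n(z_i, y_i, \tfrac{3}{4}\delta'_n)$, so in particular $Q(\bar z_j, 2d_n) \subset \tilde W_n(z_i, y_i, \delta'_n)$. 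This containment together with the covering property established above yields (ii).

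The main obstacle is the multiplicity bound (iii), and the idea is to transfer the counting problem from the curved manifold $f^n_\omega(W)$ to a flat Euclidean ball in $H_n(\omega,z_i)$ via the projection $\pi^n_F$ with $F = E_n(\omega,z_i)$. Fix $y \in \overline{W}_n(1/2)$ and suppose $y$ lies in balls $Q(\bar z_{j_1}, d_n),\dots,Q(\bar z_{j_\ell}, d_n)$. Choose $i$ such that $y \in \tilde W_n(z_i, y_i, \tfrac{1}{2}\delta'_n)$; then by Proposition~\ref{prop:ComparsionWandQ}(a) the whole ball $Q(y, 3d_n) \subset \tilde W_n(z_i, y_i, \tfrac{3}{4}\delta'_n)$. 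All centres $\bar z_{j_m}$ lie in $Q(y, d_n)$ and each disjoint ball $Q(\bar z_{j_m}, d_n/2)$ lies inside $Q(y, 3d_n/2)$, hence all of them are contained in $\tilde W_n(z_i, y_i, \tfrac{3}{4}\delta'_n)$, which is the graph of $\psi_{(\omega,z_i),n}$ over $H_n(\omega,z_i)$.

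Because the projection $\pi^n_F$ parallel to $E_n(\omega,z_i)$ is injective on a graph over $H_n(\omega,z_i)$, the projected sets $\pi^n_F(\hat Q(\bar z_{j_m}, d_n/2))$ remain pairwise disjoint, and all sit inside $\pi^n_F(\hat Q(y, 3d_n/2))$. Applying Lemma~\ref{lem:ProjectionLemma} with $\alpha = 1/2$ (valid for $n \geq N^{(4)} := N^{(1)}(1/2)$, possibly enlarged to ensure the angle estimate for $F = E_n(\omega,z_i)$ supplied by Lemma~\ref{lem:ExistenceOfl}) yields the inclusions
\begin{align*}
B^u_{z_i,n}(\pi^n_F(\hat{\bar z}_{j_m}), d_n/4) &\subset \pi^n_F\bigl(\hat Q(\bar z_{j_m}, d_n/2)\bigr), \\
\pi^n_F\bigl(\hat Q(y, 3d_n/2)\bigr) &\subset B^u_{z_i,n}(\pi^n_F(\hat y), 9d_n/4).
\end{align*}
Comparing the $(d-k)$-dimensional Lebesgue volumes of these Euclidean balls in $H_n(\omega,z_i)$ gives
\begin{align*}
\ell \cdot (d_n/4)^{d-k} \leq (9d_n/4)^{d-k},
\end{align*}
so $\ell \leq 9^{d-k}$, and one may take $L := 9^{d-k}$. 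This constant is independent of $n$, $y$ and the choice of $i$, proving (iii) and completing the lemma.
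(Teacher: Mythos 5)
Your proposal is correct and takes essentially the same route as the paper: a maximal $d_n$-separated net in $\overline W_n(1/2)$, Proposition \ref{prop:ComparsionWandQ} for properties i) and ii), and Lemma \ref{lem:ProjectionLemma} with $\alpha = 1/2$ combined with a volume comparison in $H_n(\omega,z_i)$ for the multiplicity bound; the paper merely counts the balls meeting a fixed ball $Q(\bar z_j,d_n)$ (using strictly disjoint radius-$d_n/3$ balls inside $Q(\bar z_j,3d_n)$, giving $27^{d-k}$) rather than the balls through a fixed point, which is an inessential variation. The only nitpick is that closed balls of radius $d_n/2$ about centres at distance exactly $d_n$ need not be disjoint; this is harmless and is avoided by shrinking the radius to $d_n/3$ as in the paper or by using open balls.
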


\begin{proof}
Although this is \citep[Lemma II.8.4]{Katok86} we will state the proof for sake of completeness of the covering construction. As in Section \ref{sec:ProjectionLemmas} define $d_0 := \frac{\delta_0}{12A}$ and let $n \geq 0$ be fixed for the moment. As before we will denote by $\tilde d$ the induced Euclidean metric on $f^n_\omega(W)$. As $\overline W_n(1/2)$ is compact, we can find a finite set of points $\{\bar z_j\}_{1\leq j\leq \const{M}{2}}$ such that $\tilde d(\bar z_i, \bar z_j) \geq d_n$ for all $1 \leq i,j \leq \const{M}{2}, i\neq j$, and that for any point $z' \in \overline W_n(1/2)$ there exists some $j$, $1 \leq j \leq \const{M}{2}$ such that $\tilde d(z',\bar z_j) < d_n$. Observe that such a set is not unique and its cardinality may depend on the choice of points.

Property {\it i)} follows directly from Propostion \ref{prop:ComparsionWandQ} by the choice of $d_0$. The first inclusion in {\it ii)} is satisfied by construction, the second one is obvious and the third one follows from property {\it i)}.

Thus it is left to show property {\it iii)}. For some $j$, $1\leq j \leq \const{M}{2}$, let us consider $Q(\bar z_j,d_n)$ with $\bar z_j \in \tilde W_n(z_i,y_i,\frac{1}{2}\delta'_n)$ for some  $i = i(j), 1\leq i\leq \const{M}{1}$. We will show that
\begin{align*}
\#\{1 \leq l \leq \const{M}{2}: Q(\bar z_l, d_n) \cap Q(\bar z_j, d_n) \neq \emptyset\}
\end{align*}
is bounded by some constant $K$ independently of $j$ and $n$ sufficiently large, then $L = K+1$ satisfies the desired. Since the diameter satisfies $\diam(Q(\bar z_l, d_n)) \leq 2d_n$ for any $1 \leq l \leq \const{M}{2}$ we get that if
\begin{align*}
Q(\bar z_l, d_n) \cap Q(\bar z_j, d_n) \neq \emptyset
\end{align*}
then
\begin{align*}
Q(\bar z_l, d_n) \subset Q(\bar z_j, 3d_n).
\end{align*}
Thus to prove property {\it iii)} is suffices to show that
\begin{align*}
&\#\{1 \leq l \leq \const{M}{2}: Q(\bar z_l, d_n) \cap Q(\bar z_j, d_n) \neq \emptyset\} \\
&\hspace{15ex}\leq \#\{1 \leq l \leq \const{M}{2}: Q(\bar z_l, d_n) \subset Q(\bar z_j, 3d_n) \neq \emptyset\} =: K(n,j)
\end{align*}
is bounded by some constant $K$. Since by construction we have for each $1 \leq l \leq \const{M}{2}$, $l\neq j$,
\begin{align*}
 Q\left(\bar z_l, \frac{d_n}{3}\right) \cap Q\left(\bar z_j, \frac{d_n}{3}\right) = \emptyset
\end{align*}
thus we will show that there exists $\const{N}{4}$ such that for all $n \geq \const{N}{4}$ and any $j$, $1 \leq j\leq \const{M}{2}$, the number $K(n,j)$ can be bounded by the number of disjoint balls of radius $d_n/3$ contained in $Q(\bar z_j, 3d_n)$. Thus let $z'$ such that $Q(z',\frac{d_n}{3}) \subset Q(\bar z_j, 3d_n)$. Since $\bar z_j \in \tilde W_n(z_i,z_i,\frac{1}{2} \delta'_n)$ by Proposition \ref{prop:ComparsionWandQ} we have
\begin{align*}
Q\left(z',\frac{d_n}{3}\right) \subset Q(\bar z_j, 3d_n) \subset \tilde W_n\left(z_i,y_i,\frac{3}{4}\delta'_n\right).
\end{align*}
Hence we can apply Lemma \ref{lem:ProjectionLemma} with $\alpha = \frac{1}{2}$ to $Q(z', \frac{d_n}{3})$ and $Q(z',3d_n)$ which yields that for all $n \geq \const{N}{4}:= \const{N}{1}(1/2)$ (where $\const{N}{1}$ is chosen accordinly to Lemma \ref{lem:ProjectionLemma})
\begin{align*}
B^u_{z,n}\left(\pi^n_{E_n(\omega,z)}\left(\hat z'\right), \frac{d_n}{6}\right) &\subset \pi^n_{E_n(\omega,z)}\left(\hat Q\left(z', \frac{d_n}{3}\right)\right) \\
&\subset \pi^n_{E_n(\omega,z)}\left(\hat Q\left(z', 3d_n\right)\right) \subset B^u_{z,n}\left(\pi^n_{E_n(\omega,z)}\left(\hat z'\right), \frac{9}{2}d_n\right).
\end{align*}
Thus
\begin{align*}
K(n,j) \leq \frac{\vol\left(B^{d-k}(\frac{9}{2}d_n)\right)}{\vol\left(B^{d-k}(\frac{d_n}{6})\right)} = 27^{d-k} =: K,
\end{align*}
where $B^{d-k}(r)$ denotes the $(d-k)$-dimensional Euclidean ball of radius $r$ and $\vol\left(B^{d-k}(r)\right)$ its volume.
\end{proof}

\subsection{Comparison of Volumes}

Let us consider two submanifolds $W^1$ and $W^2$ transversal to the family $\collLSM{x}{\const{q}{3}_C}$ satisfying $\norm{W^i} \leq \eps_C$, where $\eps_C$ was defined in Section \ref{sec:ChoiceOfDelta}.

Let $z \in  \Delta^l_\omega \cap \pLball[\Delta,\omega]{x}{\const{q}{3}_C/2} $ then by transversality $W_{loc}(\omega, z) \cap W^i \cap \pLball[\Delta,\omega]{x}{\const{q}{3}_C} \neq \emptyset$. Let us denote the intersection of $W^1$ and $W^2$ with the local stable manifold $W_{loc}(\omega,z)$ by $y^1 = \exp_z (\xi_0^1,\eta_0^1)$ and $y^2 = \exp_z  (\xi_0^2,\eta_0^2)$ respectively, i.e. $y^i = W_{loc}(\omega,z) \cap W^i$, where as usually $\xi_0^i \in E_0(\omega, z)$ and $\eta_0^i \in H_0(\omega, z)$, $ i= 1,2$. Clearly we have $ y^{i} \in \pLball[\Delta,\omega]{x}{\const{q}{3}_C}$ for $i = 1,2$. Let us now fix two numbers $\delta_{i,0}$ for $i = 1,2$ such that
\begin{align*}
 0 < \delta_{i,0} < \frac{1}{2} \min\left(q_C(z,W^1),q_C(z,W^2)\right) =: q_C(z,W^1,W^2).
\end{align*}

Now we can apply to the manifolds $W^1$ and $W^2$ the construction described in Section \ref{subsec:covering} and obtain for $i = 1,2$ and $n \geq 0$ the maps $\psi^i_n$ (see Lemma \ref{thm:ExsitenceOfPsi}) and the manifolds
\begin{align*}
\tilde W^i_n := \tilde W^i_n(z,y^i,\delta'_{i,n}) = \exp_{f^n_\omega z} \left\{\left(\psi^i_n(\eta),\eta\right):\eta \in \uLball[\eta^i_n]{\delta'_{i,n}}{z,n}\right\},
\end{align*}
where $\delta'_{i,n} = \delta'_{i,0}e^{(a + 11\eps)n}$ and $\eta^i_n = \pi_{E_n(\omega,z)}\circ F_{(\omega,z),n-1}\circ \dots \circ F_{(\omega,z),0}(\xi^i_0,\eta^i_0) \in H_n(\omega,z)$. Here $\pi_{E_n(\omega,n)}$ again denotes the projection of $T_{f^n_\omega z}\R^d$ to $H_n(\omega,z)$ parallel to $E_n(\omega,x)$. Let us further define for $i = 1,2$
\begin{align*}
 \hat W^i_n := \exp^{-1}_{f^n_\omega z} \left(\tilde W^i_n(z,y^i,\delta'_{i,n})\right)
\end{align*}
and for $z' \in \left(f^n_\omega \right)^{-1}\left(\tilde W^i_n\right)$ and $j = 0, 1, \dots, n$ let $\hat z'_j = \exp^{-1}_{f^j_\omega z}(f^j_\omega z')$ and $T^i_j(z') := T_{\hat  z'_j}\hat W^i_j$. As in the proof of Theorem \ref{thm:localStableManifold} let
\begin{align*}
F^n_0(\omega,z) = F_{(\omega,z),n} \circ \dots \circ F_{(\omega,z),0}.
\end{align*}
We will denote its inverse by $F^{-n}_0(\omega,z)$. Let $E$ and $E'$ be two real vector spaces of the same finite dimension, equipped with the scalar products $\langle \cdot, \cdot\rangle_E$ and $\langle \cdot, \cdot\rangle_{E'}$ respectively. If $E_1 \subset E$ is a linear subspace of $E$ and $B: E \to E'$ a linear mapping, then we can define the determinante of $B|_{E_1}$ to be
\begin{align*}
\abs{\determinante\left(B|_{E_1}\right)} := \frac{\vol_{E'_1}(B(U))}{\vol_{E_1}(U)},
\end{align*}
where $U$ is an arbitrary open and bounded subset of $E_1$ and $E'_1$ is a arbitrary linear subspace of $E'$ of the same dimension as $E_1$ with $B(U) \subset E'_1$ (see \citep[Section II.3]{Katok86}). Then we have the following lemma on the comparsion of the determinants of the pullbacks in the direction tangent to the transversal manifolds. This is basically \citep[Lemma II.9.2]{Katok86}.

\begin{lemma} \label{lem:ComparionsOfDeterminantes}
 There exists a positive constant $\const{C}{2}$ such that for any number $n \in \N$ and every $z^1 \in \left(f^n_\omega \right)^{-1}\left(\tilde W^1_n\right)$, $z^2 \in \left(f^n_\omega \right)^{-1}\left(\tilde W^2_n\right)$ we have
\begin{align*}
 \abs{\frac{\abs{\determinante\left(D_{\hat z^1_n}F_0^{-n}(\omega,z)\big|_{T^1_n(z^1)}\right)}}{\abs{\determinante\left(D_{\hat z^2_n}F_0^{-n}(\omega,z)\big|_{T^2_n(z^2)}\right)}} - 1} \leq \const{C}{2}C.
\end{align*}
\end{lemma}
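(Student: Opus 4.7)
The plan is to reduce the statement to a telescoping product of single--step ratios and then show that the log of each such ratio is geometrically small. Writing $F^{-n}_0(\omega,z) = F^{-1}_{(\omega,z),0} \circ F^{-1}_{(\omega,z),1} \circ \cdots \circ F^{-1}_{(\omega,z),n-1}$ and applying the chain rule together with the multiplicativity of determinants yields
\begin{align*}
\abs{\determinante\!\left(D_{\hat z^i_n} F_0^{-n}(\omega,z)\big|_{T^i_n(z^i)}\right)} = \prod_{j=0}^{n-1}\abs{\determinante\!\left(D_{\hat z^i_{j+1}} F^{-1}_{(\omega,z),j}\big|_{T^i_{j+1}(z^i)}\right)}
\end{align*}
for $i=1,2$. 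I introduce a common reference quantity
\begin{align*}
J_j := \abs{\determinante\!\left(D_0 F^{-1}_{(\omega,z),j}\big|_{H_{j+1}(\omega,z)}\right)},
\end{align*}
which does not depend on $i$ and cancels when we form the ratio. It thus suffices to show that for each $j$ and each $i$, the quantity $\log\abs{\det(D_{\hat z^i_{j+1}} F^{-1}_{(\omega,z),j}|_{T^i_{j+1}(z^i)})} - \log J_j$ is bounded by a summable sequence $\epsilon_j$ with $\sum_j \epsilon_j \leq \const{C}{2}C/2$.

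To estimate a single factor, I decompose the perturbation into two pieces, each of which is controlled by the constructions of Section~\ref{sec:ChoiceOfDelta}. Fix $j$ and $i$. First I move the base point from $\hat z^i_{j+1}$ to $0$: since $D F^{-1}_{(\omega,z),j}$ satisfies a Lyapunov--Lipschitz bound of the form $\Lip_{\norm{\cdot}}(D_\cdot F^{-1}_{(\omega,z),j}) \leq c_0 e^{3\eps j}$ (the same argument as for $F$ in \eqref{eq:EstimatesOfStabManfThm}, using Assumption~\ref{ass2} and Lemma~\ref{lem:ExistenceOfr}), and since $\LnormAt[j+1]{z}{\hat z^i_{j+1}} \leq 2q e^{(a+11\eps)(j+1)}$ by \eqref{eq:psiEst} and the definition of $\hat z^i_{j+1}$ on $\hat W^i_{j+1}$, the operator--norm change in the derivative between the two base points is at most $2qc_0 e^{(a+14\eps)j}$. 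Second, I change the subspace from $T^i_{j+1}(z^i)$ to $H_{j+1}(\omega,z)$. Since $T^i_{j+1}(z^i)$ is, by Theorem~\ref{thm:ExsitenceOfPsi}, the graph of the linear map $D_\cdot\psi^i_{j+1}$ whose Lyapunov operator norm is bounded by $Ce^{-7d\eps(j+1)}$, the two subspaces are close in the Grassmannian; combining this with Lemma~\ref{lem:EstimatesOnLnorm}(ii) applied to $D_0F^{-1}_{(\omega,z),j}$ restricted to $H_{j+1}(\omega,z)$ (whose inverse expands by $e^{b-2\eps}$) gives a determinant--ratio estimate of the form $|1 - \det(A|_{E_1})/\det(A|_{E_2})| \leq \const{c}{3} Ce^{-7d\eps(j+1)}$ through the standard linear--algebra identity that expresses the determinant of a linear map on a graph subspace in terms of the graph function's derivative.

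Combining the two perturbations, for each $j$ and each $i=1,2$ we obtain
\begin{align*}
\abs{\log\abs{\determinante\!\left(D_{\hat z^i_{j+1}} F^{-1}_{(\omega,z),j}\big|_{T^i_{j+1}(z^i)}\right)} - \log J_j} \leq \const{c}{4}\left(qe^{(a+14\eps)j} + Ce^{-7d\eps(j+1)}\right).
\end{align*}
Since $q\leq \const{q}{3}_C$ depends on $C$ through the choice in Section~\ref{sec:ChoiceOfDelta} (in particular $q \leq \const{c}{5}C$ by the definition of $\const{q}{1}_C$), both terms are proportional to $C$, and both series converge: $\sum_j e^{(a+14\eps)j}$ converges because $a+14\eps<0$ (using $\eps < (b-a)/(200d) \leq -a/(200d)$), and $\sum_j e^{-7d\eps j}$ converges trivially. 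Summing over $j$, applying this for $i=1$ and $i=2$, subtracting and exponentiating (using $|e^t-1|\leq 2|t|$ for $|t|$ small) gives the desired bound $\const{C}{2}C$ for the ratio.

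The main obstacle will be the subspace--change step: the ratio of determinants of a single linear map restricted to two close subspaces is not just a statement about angles but requires the precise graph--representation formula, and it must be carried out in the Lyapunov inner product rather than the Euclidean one to absorb the $e^{2\eps j}$ factor from Lemma~\ref{lem:EstimatesOnLnorm}(iii). Once that computation is set up cleanly, the rest is a geometric--series bookkeeping exercise.
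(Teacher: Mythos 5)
Your plan is essentially correct, but it follows a genuinely different route from the paper. The paper first inserts the intersection points $y^1,y^2$ of the two transversals with $W_{loc}(\omega,z)$ and splits the ratio into three factors, thereby reducing to two cases (two points on the \emph{same} transversal manifold; two points on the local stable manifold), and in each case compares the one-step derivatives at the two points directly via the determinant-difference estimate of Lemma \ref{lem:BasicLemmaOnDeterminante}, using that the mutual distance of the points decays like $e^{(a+12\eps)j}$ resp.\ $e^{(a+6\eps)j}$ and that the aperture of the tangent spaces decays like $Ce^{-7d\eps j}$. You instead compare each one-step factor, for $i=1$ and $i=2$ separately, to the common reference $J_j=\abs{\determinante(D_0F^{-1}_{(\omega,z),j}|_{H_{j+1}(\omega,z)})}$, which cancels in the ratio; this unifies the paper's two cases and avoids the intermediate points altogether, at the price of needing, in addition to Lemma \ref{lem:BasicLemmaOnDeterminante} and Lemma \ref{lem:BasicLemmaOnAperture}, a lower bound on $J_j$ --- but that is exactly the denominator estimate the paper needs anyway (its \rref{eq:finalEstimate2}, via $\sup_{\hat z'}\abs{D_{\hat z'}F_{(\omega,z),j}}\leq \const{C}{2,3}e^{\eps j}$ from Lemmas \ref{lem:ExistenceOfr} and \ref{lem:DerivativeEstimate}). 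Two caveats on your bookkeeping: your per-step subspace-change bound $\const{c}{3}Ce^{-7d\eps(j+1)}$ suppresses the factors $a^{d-k}\sim e^{(d-k)\eps j}$ from Lemma \ref{lem:BasicLemmaOnDeterminante} and $J_j^{-1}\lesssim e^{(d-k)\eps j}$ from the denominator; these grow and must be carried along, and it is precisely to beat them that the dimension-dependent rate $Ce^{-7d\eps n}$ in \rref{eq:DpsiEst} was built into Theorem \ref{thm:ExsitenceOfPsi}, leaving a net decay $e^{-5d\eps j}$ (and similarly $e^{-3d\eps j}$ after all factors), so summability survives. Also, \rref{eq:EstimatesOfStabManfThm} is stated for $F_{(\omega,x),l}$ only; the paper handles the inverse with the Euclidean Lipschitz bound of Lemma \ref{lem:ExistenceOfr} plus Lemma \ref{lem:EstimatesOnLnorm}(iii), which you can do as well --- the norm conversion only costs harmless $e^{O(\eps j)}$ factors. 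With those details tracked, and with $q\leq\const{q}{3}_C\leq\mathrm{const}\cdot C$ as you note, your telescoping argument gives the same conclusion.
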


\begin{proof}
This is basically \citep[Lemma II.9.2]{Katok86}, but we will state the proof here, since some estimates differ from the proof there.

As before let us denote by $y^1$ and $y^2$ the intersection of the transversal manifolds $W^1$ and $W^2$ respectively with the local stable manifold $W_{loc}(\omega,z)$. Since
\begin{align*}
&\frac{\abs{\determinante\left(D_{\hat z^1_n}F_0^{-n}(\omega,z)\big|_{T^1_n(z^1)}\right)}}{\abs{\determinante\left(D_{\hat z^2_n}F_0^{-n}(\omega,z)\big|_{T^2_n(z^2)}\right)}} =
\frac{\abs{\determinante\left(D_{\hat z^1_n}F_0^{-n}(\omega,z)\big|_{T^1_n(z^1)}\right)}}{\abs{\determinante\left(D_{\hat y^1_n}F_0^{-n}(\omega,z)\big|_{T^1_n(y^1)}\right)}} 
\cdot \frac{\abs{\determinante\left(D_{\hat y^1_n}F_0^{-n}(\omega,z)\big|_{T^1_n(y^1)}\right)}}{\abs{\determinante\left(D_{\hat y^2_n}F_0^{-n}(\omega,z)\big|_{T^2_n(y^2)}\right)}} \\
&\hspace{37ex}\cdot\frac{\abs{\determinante\left(D_{\hat y^2_n}F_0^{-n}(\omega,z)\big|_{T^2_n(y^2)}\right)}}{\abs{\determinante\left(D_{\hat z^2_n}F_0^{-n}(\omega,z)\big|_{T^2_n(z^2)}\right)}}
\end{align*}
the problem can be reduced to estimate the quotient in the following two cases:
\begin{enumerate}
\item the transversal manifolds $W^1$ and $W^2$ coincide, i.e. first and third multiplier
\item $z^1,z^2 \in W_{loc}(\omega,z)$, i.e. the second multiplier with $y^1 = z^1$ and $y^2 =z^2$.
\end{enumerate}
Because of the general inequality for $a,b,c > 0$
\begin{align*}
\abs{abc - 1} \leq \abs{a -1}bc + \abs{b-1}c + \abs{c-1}
\end{align*}
the assertion follows, if we can bound each quotient separately.

{\it Case i).} Without loss of generality let us assume that $z^1, z^2 \in W^1$. The same proof is true if $z^1, z^2 \in W^2$. By the chain rule we have
\begin{align} \label{eq:tobeestimated}
L_n(z^1,z^2) &:= \frac{\abs{\determinante\left(D_{\hat z^1_n}F_0^{-n}(\omega,z)\big|_{T^1_n(z^1)}\right)}}{\abs{\determinante\left(D_{\hat z^2_n}F_0^{-n}(\omega,z)\big|_{T^1_n(z^2)}\right)}}
=\prod_{j=1}^n \frac{\abs{\determinante\left(D_{\hat z^1_j}F_{(\omega,z),j-1}^{-1}\big|_{T^1_j(z^1)}\right)}}{\abs{\determinante\left(D_{\hat z^2_j}F_{(\omega,z),j-1}^{-1}\big|_{T^1_j(z^2)}\right)}}\notag\\
&\leq \prod_{j=1}^n \left(1+ \frac{\abs{\abs{\determinante\left(D_{\hat z^1_j}F_{(\omega,z),j-1}^{-1}\big|_{T^1_j(z^1)}\right)} - \abs{\determinante\left(D_{\hat z^2_j}F_{(\omega,z),j-1}^{-1}\big|_{T^1_j(z^2)}\right)}}}{\abs{\determinante\left(D_{\hat z^2_j}F_{(\omega,z),j-1}^{-1}\big|_{T^1_j(z^2)}\right)}} \right)
\end{align}
We will estimate the numerator and the enumerator in the last expression separately. By definition we have
\begin{align*}
\hat W^1_j := \left\{ \left(\psi^1_j(\eta),\eta\right) : \eta \in \uLball[\eta_j]{\delta'_{1,j}}{z,j} \right\} \subset T_{f^j_\omega z}\R^d.
\end{align*}
Because of $z^i \in \left(f^n_\omega\right)^{-1}(\tilde W^1_n)$ and $F^{-1}_{(\omega,z),l}(\hat W^1_{l+1}) \subset \hat W^1_l$, $l \in \N$ and $i = 1,2$, we get for $0 \leq j \leq n$ that for $i = 1,2$
\begin{align*}
\hat z_j^i = F^j_0(\omega,z)\left(\exp_z^{-1}(z^i)\right) \in \hat W^1_j.
\end{align*}
By Lemma \ref{lem:BasicLemmaOnDeterminante} there exists a constant $\const{C}{2,1} = \const{C}{2,1}(k) > 0$ such that
\begin{align*}
&\abs{\abs{\determinante\left(D_{\hat z^1_j}F_{(\omega,z),j-1}^{-1}\big|_{T^1_j(z^1)}\right)} - \abs{\determinante\left(D_{\hat z^2_j}F_{(\omega,z),j-1}^{-1}\big|_{T^1_j(z^2)}\right)}} \\
&\hspace{15ex}\leq \const{C}{2,1} \sup_{\hat z' \in \hat W^1_j}\abs{D_{\hat z'}F_{(\omega,z),j-1}^{-1}}^{d-k} \cdot\\
&\hspace{25ex}\left(\abs{D_{\hat z^1_j}F_{(\omega,z),j-1}^{-1} - D_{\hat z^2_j}F_{(\omega,z),j-1}^{-1}} + \Gamma_{\abs{\cdot}}\left(T^1_j(z^1),T^1_j(z^2)\right)\right),
\end{align*}
where $\Gamma_{\abs{\cdot}}$ denotes the aparture between to linear spaces with respect to the Euclidean norm, i.e. for two such spaces $E$ and $E'$ the aparture is defined by
\begin{align*}
\Gamma_{\abs{\cdot}}(E,E') := \sup_{\substack{e \in E\\\abs{e} = 1}} \inf_{e' \in E'} \abs{e-e'}.
\end{align*}
Let us first observe that by Lemma \ref{lem:EstimatesOnLnorm}, the properties of $\psi^1_j$ (see Theorem \ref{thm:ExsitenceOfPsi}) and \rref{eq:EstiamteOnStableManifold} there exists some constant $\const{C}{2,2}$ such that for $1 \leq j \leq n$
\begin{align} \label{eq:EstimateOnDiam}
\sup_{\hat z' \in \hat W^1_j} \abs{\hat z'} &\leq 2 \sup_{\hat z' \in \hat W^1_j} \LnormAt[j]{z}{\hat z'} \notag\\
&\leq 2 \sup_{\eta \in \uLball[\eta_j]{\delta'_{1,j}}{z,j}} \LnormAt[j]{z}{\left(\psi^1_j(\eta),\eta\right)}\notag\\
&\leq 2 \max \left\{\left(\frac{1}{4} + C\right)\const{q}{3}_Ce^{(a+7\eps)j}; \delta_{1,j} + \LnormAt[j]{z}{\eta_j} \right\}\notag\\
&\leq 2 \max \left\{\left(\frac{1}{4} + C\right)\const{q}{3}_Ce^{(a+7\eps)j}; \delta_{1,0}e^{(a+11\eps)j} + \const{q}{3}_Ce^{(a+6\eps)j} \right\}\notag\\
&\leq \const{C}{2,2} e^{(a+11\eps)j}. 
\end{align}
Then we have by Lemma \ref{lem:ExistenceOfr}, Lemma \ref{lem:DerivativeEstimate} and \rref{eq:EstimateOnDiam} for $1 \leq j \leq n$
\begin{align} \label{eq:estimateOfTheInverseDerivative}
\sup_{\hat z' \in \hat W^1_j}\abs{D_{\hat z'}F_{(\omega,z),j-1}^{-1}} &\leq \sup_{\hat z' \in \hat W^1_j}\abs{D_{\hat z'}F_{(\omega,z),j-1}^{-1} - D_0F_{(\omega,z),j-1}^{-1}} + \abs{D_0F_{(\omega,z),j-1}^{-1}}\notag\\
&\leq r' e^{\eps (j-1)} \sup_{\hat z' \in \hat W^1_{j-1}} \abs{\hat z'}+ C' e^{\eps (j-1)}\notag\\
&\leq r'\const{C}{2,2} e^{\eps j}  e^{(a+11\eps)(j-1)} + C' e^{\eps (j-1)}\notag\\
&\leq \const{C}{2,3} e^{\eps (j-1)}.
\end{align}
By Lemma \ref{lem:ExistenceOfr} and Theorem \ref{thm:ExsitenceOfPsi} we get for $1 \leq j \leq n$
\begin{align} \label{eq:EstimateOfTheDiameter}
&\abs{D_{\hat z^1_j}F_{(\omega,z),j-1}^{-1} - D_{\hat z^2_j}F_{(\omega,z),j-1}^{-1}} \leq r' e^{\eps (j-1)} \abs{\hat z^1_{j-1} - \hat z^2_{j-1}} \notag\\
&\hspace{5ex}\leq 2  r' e^{\eps (j-1)} \LnormAt[(j-1)]{z}{\hat z^1_{j-1} - \hat z^2_{j-1}} \leq 2r'e^{\eps (j-1)} \sup_{\hat z', \hat z'' \in \hat W^1_{j-1}} \LnormAt[j-1]{z}{\hat z' - \hat z''}\notag\\
&\hspace{5ex}\leq 2r'e^{\eps (j-1)} \!\!\!\!\!\!\!\!\!\sup_{\eta, \eta' \in \uLball[\eta_{j-1}]{\delta'_{1,j-1}}{z,j-1}} \!\!\!\!\!\!\ \max\left\{\LnormAt[j-1]{z}{\eta - \eta'};\LnormAt[j-1]{z}{\psi^1_{j-1}(\eta) - \psi^1_{j-1}(\eta')}\right\}\notag\\
&\hspace{5ex}\leq 2r'e^{\eps (j-1)}  \max\left\{2\delta'_{1,j-1}; 2\delta'_{1,j-1} \sup_{\eta \in \uLball[\eta_{j-1}]{\delta'_{1,j-1}}{{j-1}}} \norm{D_\eta\psi^1_{j-1}}\right\}\notag\\
&\hspace{5ex}\leq 2r'e^{\eps (j-1)}  \max\left\{2\delta'_{1,j-1}; 2\delta'_{1,j-1} C e^{-7d\eps (j-1)}\right\}\notag\\
&\hspace{5ex}= 4r'\delta_{1,0} e^{(a+12\eps) (j-1)}.
\end{align}
The aparture between $T^1_j(z^1)$ and $T^1_j(z^2)$ can be bounded via Lemma \ref{lem:BasicLemmaOnAperture}
\begin{align*}
\Gamma_{\abs{\cdot}}\left(T^1_j(z^1),T^1_j(z^2)\right) &\leq 2A e^{2\eps j} \Gamma_{\LnormAt[j]{z}{\cdot}}\left(T^1_j(z^1),T^1_j(z^2)\right) \\
&\leq 8A e^{2\eps j} \sup_{\eta \in \uLball[\eta_j]{\delta'_{1,j}}{z,j}} \LnormAt[j]{z}{D_{\eta} \psi^1_j}\\
&\leq 8A e^{2\eps j} C e^{-7d\eps j},
\end{align*}
where $\Gamma_{\LnormAt[j]{z}{\cdot}}$ denotes the aparture with respect to the Lyapunov norm. So finally we get
\begin{align} \label{eq:finalEstimate1}
&\abs{\abs{\determinante\left(D_{\hat z^1_j}F_{(\omega,z),j-1}^{-1}\big|_{T^1_j(z^1)}\right)} - \abs{\determinante\left(D_{\hat z^2_j}F_{(\omega,z),j-1}^{-1}\big|_{T^1_j(z^2)}\right)}} \notag\\
&\hspace{15ex}\leq \const{C}{2,1} (\const{C}{2,3})^{d-k} e^{\eps j(d-k)} \left(4r'\delta_{1,0} e^{(a+12\eps)(j-1)}  + 8A C e^{-5d\eps j}\right)\notag\\
&\hspace{15ex}\leq \const{C}{2,4}(\delta_{1,0} + C)e^{-4d \eps j}
\end{align}
with a constant $\const{C}{2,4} > 0$. Finally we have to estimate the denominator in \rref{eq:tobeestimated}. We have analogously to \rref{eq:estimateOfTheInverseDerivative}
\begin{align} \label{eq:finalEstimate2}
\determinante\left(D_{\hat z^2_j}F_{(\omega,z),j-1}^{-1}\big|_{T^1_j(z^2)}\right)^{-1} &= \determinante\left(D_{\hat z^2_{j-1}}F_{(\omega,z),j-1}\big|_{T^1_{j-1}(z^2)}\right)\notag\\
&\leq \abs{D_{\hat z^2_{j-1}}F_{(\omega,z),j-1}}^{d-k}\notag\\
&\leq \sup_{\hat z' \in \hat W^i_{j-1}} \abs{D_{\hat z'}F_{(\omega,z),j-1}}^{d-k}\notag\\
&\leq (\const{C}{2,3})^{d-k} e^{(d-k)\eps (j-1)}.
\end{align}
Thus by combining \rref{eq:finalEstimate1} and \rref{eq:finalEstimate2} there exists a constant $\const{C}{2,5}$ such that
\begin{align*}
L_n(z^1,z^2) &\leq \prod_{j=1}^{n}\left(1 + \const{C}{2,4}(\delta_{1,0} + C)(\const{C}{2,3})^{d-k} e^{(d-k)\eps j}e^{-4d \eps j} \right)\\
&\leq \prod_{j=1}^{n}\left(1 + \const{C}{2,5}(\delta_{1,0} + C) e^{-3d\eps  j}\right).
\end{align*}
Let us observe that for any $\theta \in (0,1)$ and $a \in (0,2\const{C}{2,5})$ we have
\begin{align*}
\prod_{j=0}^{+\infty} \left(1 + a \theta^j\right) &\leq \exp\left(a \sum_{j=0}^{+\infty}\theta^j \right) = \exp\left(\frac{a}{1 - \theta}\right) \\
&\leq 1 + a\left(\frac{1}{1-\theta} + \exp\left(\frac{2\const{C}{2,5}}{1-\theta}\right)\right) =: 1 + \const{C}{2,6}a
\end{align*}
and thus with $\theta = e^{-3d\eps}$ and $a = \const{C}{2,5}(\delta_{1,0} + C)$ we get
\begin{align*}
L_n(z^1,z^2) \leq 1 + \const{C}{2,5}\const{C}{2,6}(\delta_{1,0} + C).
\end{align*}
Since $z^1$ and $z^2$ appear symmetrically in all our considerations we get
\begin{align*}
\frac{1}{L_n(z^1,z^2)} = L_n(z^2,z^1) \leq 1 + \const{C}{2,5}\const{C}{2,6}(\delta_{1,0} + C)
\end{align*}
and thus finally because of $1/(1+x) \geq 1-x$, $x \geq 0$ and $\delta_{1,0} \leq C$ we achieve
\begin{align*}
\abs{L_n(z^1,z^2) - 1} \leq \const{C}{2,5}\const{C}{2,6}(\delta_{1,0} + C) \leq 2\const{C}{2,5}\const{C}{2,6} C =: \const{C}{2} C.
\end{align*}

{\it Case ii).} The proof of this case follows the same line as in case i), except we have to find an analog bound in \rref{eq:EstimateOfTheDiameter} for for $\abs{y_{j-1}^1 - y_{j-1}^2}$. Let us note that $z, y^1, y^2 \in W_{loc}(\omega,z) \cap \pLball[\Delta,\omega]{x}{\const{q}{3}_C}$, then we have by Proposition \ref{prop:EstimateOnLnorms}
\begin{align*}
\abs{\hat y^1_j - \hat y^2_j} &\leq 2 \LnormAt[j]{z}{\hat y^1_j - \hat y^2_j} = 2 \LnormAt[j]{z}{F^j_0(\omega,z)(\exp_z^{-1}(y^1)) - F^j_0(\omega,z)(\exp_z^{-1}(y^2))}\\
&\leq 2r_0 e^{(a + 6 \eps)j} \left( \LnormAt[0]{z}{\exp_z^{-1}(y^1)} + \LnormAt[0]{z}{\exp_z^{-1}(y^2)}\right) \\
&= 2r_0 e^{(a + 6 \eps)j} \left( \LnormAt[0]{z}{\exp_z^{-1}(y^1) - \exp_z^{-1}(z)} + \LnormAt[0]{z}{\exp_z^{-1}(y^2)- \exp_z^{-1}(z)}\right) \\
&\leq 4r_0A e^{2\eps} e^{(a + 6 \eps)j} \Big( \LnormAt[0]{x}{\exp_x^{-1}(y^1) - \exp_x^{-1}(z)} \\
&\hspace{38ex}+ \LnormAt[0]{x}{\exp_x^{-1}(y^2)- \exp_x^{-1}(z)}\Big) \\
&\leq 16r_0A e^{2\eps} e^{(a + 6 \eps)j} \const{q}{3}_C.
\end{align*}
By definition of we have $\const{q}{3}_C \leq \const{q}{1} \leq C$ and thus we finally get analogously to \rref{eq:EstimateOfTheDiameter}
\begin{align*}
&\abs{D_{\hat y^1_j}F_{(\omega,z),j-1}^{-1} - D_{\hat y^2_j}F_{(\omega,z),j-1}^{-1}} \leq 16r'r_0 A e^{2\eps} e^{(a + 7 \eps)j} C \leq  \const{C}{2,7} C e^{(a+5\eps)j},
\end{align*}
which gives the analog bound for \rref{eq:EstimateOfTheDiameter} and thus finishes the proof.
\end{proof}

Let us denote by $\lambda^i_n$ the $(d-k)$-dimensional volume on $\tilde W^i_n(z,y^i,\delta'_{i,n})$ induced by the Euclidean norm. Then we have the following result (see \citep[Lemma II.9.3]{Katok86}) on the comparsion of volumes under the pull back of the diffeomorphisms, which is a direct result from Lemma \ref{lem:ComparionsOfDeterminantes}.

\begin{lemma} \label{lem:CompPullingBack}
There exists a constant $\const{C}{3}$ such that for any $\tau \in (0,1)$ and $n \geq 1$ if $A^i \subset \tilde W^i_n(z,y^i,\delta'_{i,n})$ for $i = 1,2$ with $\lambda^2_n(A^2) >0$ and
\begin{align*}
\abs{\frac{\lambda_n^1(A^1)}{\lambda^2_n(A^2)} - 1} < \tau
\end{align*}
then this implies
\begin{align*}
\abs{\frac{\lambda_0^1\left((f^n_\omega)^{-1}(A^1)\right)}{\lambda^2_0\left((f^n_\omega)^{-1}(A^2)\right)} - 1} \leq \const{C}{3}(\tau+C).
\end{align*}
\end{lemma}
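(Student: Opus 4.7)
The plan is to reduce the claim to a single change-of-variables identity combined with the pointwise Jacobian bound already established in Lemma \ref{lem:ComparionsOfDeterminantes}. For $i=1,2$ and $z^i \in \tilde W^i_n(z,y^i,\delta'_{i,n})$, let me abbreviate
\begin{align*}
J^i(z^i) := \abs{\determinante\left(D_{\hat z^i_n} F_0^{-n}(\omega,z)\big|_{T^i_n(z^i)}\right)}.
\end{align*}
Since $\exp_{f^n_\omega z}$ is a translation on $\R^d$ (so its differential is the identity on tangent spaces) and $F^n_0(\omega,z) = \exp_{f^n_\omega z}^{-1}\circ f^n_\omega\circ \exp_z$ on the relevant domain, the Jacobian of $(f^n_\omega)^{-1}$ acting on $\tilde W^i_n$ at a point $z^i$ equals $J^i(z^i)$. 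Hence the submanifold change-of-variables formula gives
\begin{align*}
\lambda^i_0\!\left((f^n_\omega)^{-1}(A^i)\right) = \int_{A^i} J^i(z^i)\,\dx\lambda^i_n(z^i), \qquad i=1,2.
\end{align*}

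Next, I invoke Lemma \ref{lem:ComparionsOfDeterminantes}, which states that for every pair $(z^1,z^2)\in A^1\times A^2$
\begin{align*}
(1 - \const{C}{2}C)\,J^2(z^2) \le J^1(z^1) \le (1 + \const{C}{2}C)\,J^2(z^2).
\end{align*}
Integrating the left inequality in $z^1$ against $\lambda^1_n$ over $A^1$, then in $z^2$ against $\lambda^2_n$ over $A^2$, and applying Fubini to switch the order of integration, and doing the symmetric thing for the right inequality, I obtain
\begin{align*}
(1 - \const{C}{2}C)\,\frac{\lambda^1_n(A^1)}{\lambda^2_n(A^2)} \le \frac{\lambda^1_0\!\left((f^n_\omega)^{-1}(A^1)\right)}{\lambda^2_0\!\left((f^n_\omega)^{-1}(A^2)\right)} \le (1 + \const{C}{2}C)\,\frac{\lambda^1_n(A^1)}{\lambda^2_n(A^2)}.
\end{align*}
Note the right-hand denominator is non-zero because $\lambda^2_n(A^2)>0$ and $J^2$ is strictly positive (as $F_0^n$ is a diffeomorphism).

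Finally, the hypothesis $\abs{\lambda^1_n(A^1)/\lambda^2_n(A^2) - 1} < \tau$ together with the two-sided bound above produces
\begin{align*}
\abs{\frac{\lambda^1_0\!\left((f^n_\omega)^{-1}(A^1)\right)}{\lambda^2_0\!\left((f^n_\omega)^{-1}(A^2)\right)} - 1} \le \const{C}{2}C + \tau + \const{C}{2}C\tau \le \const{C}{3}(\tau + C),
\end{align*}
where $\const{C}{3} := 2(\const{C}{2}+1)$ (using that $C,\tau\in(0,1)$). There is essentially no obstacle here: all the substantive geometric and analytic work has already been carried out in Lemma \ref{lem:ComparionsOfDeterminantes}, and the present statement is only its integrated version, so the only step requiring care is identifying the Jacobian of $(f^n_\omega)^{-1}\!\restriction\!\tilde W^i_n$ with $J^i$, which is immediate from the fact that the exponential maps are translations.
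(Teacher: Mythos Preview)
Your proof is correct and follows essentially the same approach as the paper: both reduce the statement to the change-of-variables formula on the submanifolds $\tilde W^i_n$ and then invoke Lemma~\ref{lem:ComparionsOfDeterminantes}. The only cosmetic difference is that the paper applies the integral mean value theorem to extract a single point $\zeta^i_n\in\hat A^i$ at which the Jacobian is evaluated, whereas you integrate the pointwise two-sided bound from Lemma~\ref{lem:ComparionsOfDeterminantes} directly (your ``Fubini'' step); your route is arguably cleaner since it avoids any implicit continuity or connectedness hypotheses needed for the mean value theorem, and it yields the same final estimate with an explicit constant $\const{C}{3}$. One minor notational point: in Lemma~\ref{lem:ComparionsOfDeterminantes} the symbols $z^i$ denote points in $(f^n_\omega)^{-1}(\tilde W^i_n)$ rather than in $\tilde W^i_n$, so your $\hat z^i_n$ should be read as $\exp_{f^n_\omega z}^{-1}(z^i)$ for $z^i\in\tilde W^i_n$; this is harmless but worth making explicit.
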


\begin{proof}
Basically this is a direct result from Lemma \ref{lem:ComparionsOfDeterminantes}. Let us observe that for any $x \in \R^d$ the exponential function $\exp_x$ as a function on $\R^d$ is translation. Hence the Lebesgue measure $\hat \lambda^i_n$ on $\hat W^i_n(z,y^i,\delta'_{i,n}) = \exp_{f^n_\omega z}^{-1}(\tilde W^i_n(z,y^i,\delta'_{i,n}))$ coincides with $\lambda^i_n \circ \exp_{f^n_\omega z}$. So if we define for $i = 1,2$ the sets $\hat A^i := \exp_{f^n_\omega z}^{-1}(A^i)$ then we immediately get
\begin{align*}
\frac{\lambda_n^1(A^1)}{\lambda^2_n(A^2)} = \frac{\hat\lambda_n^1(\hat A^1)}{\hat\lambda^2_n(\hat A^2)}.
\end{align*}
For $i=1,2$ we have $\lambda^i_0\left((f^n_\omega)^{-1}(A^i)\right) = \hat \lambda^i_0\left(F^{-n}_0(\omega,z)(\hat A^i)\right)$. Thus by change of variables and the mean value theorem we get
\begin{align*}
\hat\lambda_0^i\left(F_0^{-n}(\omega,z)(\hat A^i)\right) &= \int_{\hat A^i} \abs{\determinante\left(D_\zeta F^{-n}_0(\omega,z)\big|_{T_\zeta\hat W^i_n}\right)} \dx \hat\lambda^i_n(\zeta)\\
&= \abs{\determinante\left(D_{\zeta_n^i}F^{-n}_0(\omega,z)\big|_{T_{\zeta_n^i}\hat W^i_n}\right)} \hat\lambda^i_n(\hat A^i)
\end{align*}
for some points $\zeta_n^i \in \hat A^i$, $i = 1,2$. By Lemma \ref{lem:ComparionsOfDeterminantes} we finally get
\begin{align*}
\abs{\frac{\lambda_0^1\left((f^n_\omega)^{-1}(A^1)\right)}{\lambda^2_0\left((f^n_\omega)^{-1}(A^2)\right)} - 1} = \abs{\frac{\hat\lambda_0^1\left(F_0^{-n}(\omega,z)(\hat A^1)\right)}{\hat\lambda_0^2\left(F_0^{-n}(\omega,z)(\hat A^2)\right)} - 1} \leq \const{C}{2} C(1 + \tau) + \tau \leq \const{C}{2}(C + \tau)
\end{align*}
with $\const{C}{3} := \const{C}{2}$.
\end{proof}

\subsection{Construction of the Final Covering} \label{sec:finalCovering}

Fix two submanifolds $W^1$ and $W^2$ transversal to $\collLSM{x}{\const{q}{3}_C}$. We will now apply the covering construction presented in the Section \ref{subsec:covering} to $W^1$. Let us fix $P \in W^1$, $0 < \beta < h_P$, $0<h<h_P-\beta$  and $0 < \delta_0 < \delta_{P,\beta,h}$. Now Lemma \ref{lem:Woverlines} implies that for $n \geq \const{N}{4}$, which will be as well fixed for the moment, there exists $\const{M}{1}_n$ and $\const{M}{2}_n$ and corresponding points $\{z_i\}_{1 \leq i \leq \const{M}{1}_n}$ and $\{\bar z_j\}_{1 \leq j \leq \const{M}{2}_n}$. For the moment let us fix some $j$, $1 \leq j \leq \const{M}{2}_n$. We will consider the submanifolds $\tilde W^1_n(z_i,y^1_i,\delta'_n)$, the sets $\overline W^1_n(1/2)$, $\overline W^1_n(1)$ and $Q(\bar z_j,d_n) \subset \tilde W^1_n(z_i,y^1_i,\delta'_n)$ without any further explanation (for details see Section \ref{subsec:covering}).

By Lemma \ref{lem:Woverlines} there exists $i = i(j)$, $1 \leq i \leq \const{M}{1}_n$, such that we have $Q(\bar z_j,d_n) \cap \tilde W^1_n\left(z_i, y^1_i, \frac{1}{2}\delta'_n\right) \neq \emptyset$ and $Q(\bar z_j,2d_n) \subset \tilde W^1_n\left(z_i, y^1_i,\delta'_n\right)$.

As before for $z' \in \tilde W^1_n(z_i,y^1_i,\delta'_n)$ let us set $\hat z' := \exp^{-1}_{f^n_\omega z_i}(z')$ and $\pi^n_{z_i} := \pi^n_{E_n(\omega,z_i)}$ denotes the projection of $T_{f^n_\omega z_i} \R^d$ onto $H_n(\omega,z_i)$ parallel to the subspace $E_n(\omega,z_i)$. 

Now we will start the final step before presenting the proof of the absolute continuity theorem, which will allow us to formulate and prove Lemma \ref{lem:CompOfWns}.

Fix $\theta \in (0,1/6)$ and let us consider the covering of the ball $\uballat[\pi^n_{z_i}(\hat{\bar z}_j)]{2(1-\theta)d_n}{n}{z_i} \subset H_{n}(\omega,z_i)$ by the closed $(d-k)$-dimensional cubes $\hat D_{j,m} \subset H_{n}(\omega,z_i)$, $1 \leq m \leq N_j$, of diameter $\theta d_n$ (with respect to the Euclidean norm) with disjoint interiors.

If $l$ is the length of an edge of the cube $\hat D_{j,m}$, then we will denote by $\left(\hat D_{j,m}\right)_{\bar l}$ the concentric cube with length of the edge $l+{\bar l}$. Let $0 < \alpha_0 < \frac{\theta d_0}{\sqrt{d-k}}$ and define $\alpha_n := \alpha_0 e^{(a + 9\eps)n}$ for $n \geq 0$. If we denote by $\vol$ the $(d-k)$-dimensional volume in $H_n(\omega,z_i)$ then we have
\begin{align} \label{eq:EstimateOnTheDVolumes}
\abs{\frac{\vol\left(\left(\hat D_{j,m}\right)_{\alpha_n}\right)}{\vol\left(\hat D_{j,m}\right)} - 1} \leq 2^{d-k} \sqrt{d-k} \frac{\alpha_0}{\theta d_0}.
\end{align}
By the choice of $\alpha_0$ we have
\begin{align*}
\hat D_{j,m} \subset \left(\hat D_{j,m}\right)_{\alpha_n} \subset \uballat[\pi^n_{z_i}(\hat{\bar z}_j)]{2d_n}{n}{z_i}.
\end{align*}
Because of $2Ad_0 \leq \delta_0 < \delta_{P,\beta}$, $\diam\left(\left(\hat D_{j,m}\right)_{\alpha_n}\right) \leq 2\theta d_n$ and $\bar z_j \in \tilde W^1_n(z_i, y_i, \frac{1}{2}\delta'_n)$ Lemma \ref{lem:ProjectionLemma} and Proposition \ref{prop:ComparsionWandQ} imply for $n \geq \max\left\{\const{N}{1}(1/3);\const{N}{4}\right\}$ that
\begin{align} \label{eq:hatDisFine}
\left(\hat D_{j,m}\right)_{\alpha_n} &\subset \uballat[\pi^n_{z_i}(\hat{\bar z}_j)]{2d_n}{n}{z_i} \subset \pi_{z_i}^n(\hat Q (\bar z_j, 3d_n)) \\
&\subset \pi_{z_i}^n(\hat W^1_n(z_i,y^1_i,\delta'_n))
= \uLball[\eta^1_{i,n}]{\delta'_n}{z_i,n},
\end{align}
where $\eta^1_{i,n} = \pi^n_{z_i}(F^n_0(\omega,z_i)y^1_i)$. 
Thus for $n \geq \max\left\{\const{N}{1}(1/3);\const{N}{4}\right\}$ the function $\psi^2_{z_i,n}$ is well defined on $\left(\hat D_{j,m}\right)_{\alpha_n}$ and analogously one can see that $\psi^1_{z_i,n}$ is well defined on $\hat D_{j,m}$, where $\psi^k_{z_i,n}$, $k = 1,2$, are the functions which are constructed in Theorem \ref{thm:ExsitenceOfPsi} for $W^k$, $k = 1,2$ with respect to $z_i$. So let us finally define
\begin{align*}
D^1_{j,m} &:= \exp_{f^n_\omega z_i} \left\{ \left(\psi^1_{z_i,n}(\eta),\eta\right) : \eta \in \hat D_{j,m} \right\}\\
\bar D^2_{j,m} &:= \exp_{f^n_\omega z_i} \left\{ \left(\psi^2_{z_i,n}(\eta),\eta\right) : \eta \in \left(\hat D_{j,m}\right)_{\alpha_n} \right\}.
\end{align*}

Then we have the following important lemma, which basically states that the pullback of the set $D^1_{j,m}$ is mapped by the Poincar\'e map $P_{W^1,W^2}$ (defined in Section \ref{sec:TheoremAbsoluteContinuity}) into the pullback of the set $\bar D^2_{j,m}$. Later this will give us the possibility to compare the Lebesgue measures under the Poincar\'e map on $W^1$ with the one on $W^2$.

\begin{lemma} \label{lem:CompOfWns}
For every $\alpha_0 >0$ there exists $\const{N}{6} = \const{N}{6}(\alpha_0)\geq \max\left\{\const{N}{1}(1/3);\const{N}{4}\right\}$ such that for any $n \geq \const{N}{6}$, $1 \leq j \leq \const{M}{2}_n$ and $1 \leq m \leq N_j$ we have
\begin{align*}
P_{W^1,W^2}\left((f^n_\omega)^{-1}(D^1_{j,m}) \cap \localLSM{x}{\const{q}{3}_C}\right) \subset (f^n_\omega)^{-1}(\bar D^2_{j,m}).
\end{align*}
\end{lemma}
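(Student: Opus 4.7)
Fix $y^1 \in W^1 \cap \localLSM{x}{\const{q}{3}_C}$ with $f^n_\omega y^1 \in D^1_{j,m}$, and let $z' \in \Delta^l_\omega \cap \pLball[\Delta,\omega]{x}{\const{q}{3}_C/2}$ be such that $y^1 \in W_{loc}(\omega, z')$. The Poincaré image is $y^2 := P_{W^1,W^2}(y^1) = W^2 \cap W_{loc}(\omega, z')$, so both $y^1$ and $y^2$ lie on the same local stable manifold. The plan is to show (a) that $\pi^n_{z_i}(\hat f^n_\omega y^2)$ lies in the enlarged cube $(\hat D_{j,m})_{\alpha_n}$, and (b) that $\hat f^n_\omega y^2$ is actually the graph value $(\psi^2_{z_i, n}(\eta), \eta)$ above $\eta = \pi^n_{z_i}(\hat f^n_\omega y^2)$. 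Together (a) and (b) yield $f^n_\omega y^2 \in \bar D^2_{j,m}$, as desired.

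Claim (a) rests on the stable-manifold contraction applied along $W_{loc}(\omega, z')$. Using $y^1, y^2, z' \in \pLball[\Delta,\omega]{x}{\const{q}{3}_C}$ together with Proposition \ref{prop:EstimateOnLnorms} at $n=0$, one sees that $\LnormAt[0]{z'}{\exp^{-1}_{z'}(y^k)} \leq 4A\const{q}{3}_C \leq r_0$, so the estimate \eqref{eq:EstiamteOnStableManifold} from the proof of Theorem \ref{thm:localStableManifold} gives
\begin{align*}
\LnormAt[n]{z'}{\exp^{-1}_{f^n_\omega z'}(f^n_\omega y^k)} \leq K_1 e^{(a+6\eps)n}, \qquad k = 1,2.
\end{align*}
Shifting the base point from $z'$ to $z_i$ by another application of Proposition \ref{prop:EstimateOnLnorms} contributes a factor $2Ae^{2\eps n}$; projecting with the Lyapunov-orthogonal map $\pi^n_{z_i}$ and passing to the Euclidean norm through Lemma \ref{lem:EstimatesOnLnorm} then yields
\begin{align*}
\abs{\pi^n_{z_i}(\hat f^n_\omega y^2) - \pi^n_{z_i}(\hat f^n_\omega y^1)} \leq K_3 e^{(a+8\eps)n}.
\end{align*}
As $\alpha_n = \alpha_0 e^{(a+9\eps)n}$ carries an extra factor of $e^{\eps n}$, the right-hand side drops below $\alpha_n$ as soon as $e^{\eps n} > K_3/\alpha_0$; defining
\[
\const{N}{6}(\alpha_0) := \max\left\{\const{N}{1}(1/3),\,\const{N}{4},\,\left\lceil \eps^{-1}\log(K_3/\alpha_0) \right\rceil\right\}
\]
and using $\pi^n_{z_i}(\hat f^n_\omega y^1) \in \hat D_{j,m}$ then gives claim (a) for every $n \geq \const{N}{6}$.

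Claim (b) will be the main obstacle: one must show that $\hat f^n_\omega y^2$ itself lies on the graph of $\psi^2_{z_i,n}$, not merely on the immersed manifold $\exp^{-1}_{f^n_\omega z_i}(f^n_\omega W^2)$. The plan is to invoke the invariance \eqref{eq:InvarianceOfW} from Theorem \ref{thm:ExsitenceOfPsi}, which propagates $\graph \psi^2_{z_i,0}$ into $\graph \psi^2_{z_i,n}$ at each step. The condition $y^1 \in (f^n_\omega)^{-1}(\tilde W^1_n(z_i, y^1_i, \delta'_n)) \subset \tilde W^1_0(z_i, y^1_i, \delta'_0)$ forces $y^1$ close to $y^1_i$, and by continuity of the holonomy along the stable foliation $z'$ is then close to $z_i$ and $y^2$ close to $y^2_i$ in $W^2$. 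Provided the initial parameter $\delta'_{2,0}$ in the construction of $\psi^2_{z_i,0}$ is chosen large enough that $\hat y^2$ sits in the initial domain $\uLball[\eta^2_{i,0}]{\delta'_{2,0}}{z_i,0}$, induction via \eqref{eq:InvarianceOfW} keeps $\hat f^n_\omega y^2 \in \graph \psi^2_{z_i,n}$ for every $n$. The derivative bound $\LnormAt[n]{z_i}{D_\eta \psi^2_{z_i,n}} \leq C e^{-7d\eps n}$ in Theorem \ref{thm:ExsitenceOfPsi} then guarantees the uniqueness of the graph value above any given $\eta$, so combining with (a) one obtains $\hat f^n_\omega y^2 = (\psi^2_{z_i,n}(\eta),\eta)$ with $\eta \in (\hat D_{j,m})_{\alpha_n}$, completing the proof.
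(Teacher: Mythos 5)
Your step (a) is, in substance, the paper's entire proof of this lemma: the paper bounds $\abs{\eta^1_n-\eta^2_n}\le 2\LnormAt[n]{z_i}{(\xi^1_n,\eta^1_n)-(\xi^2_n,\eta^2_n)}\le 2Ae^{2\eps n}\LnormAt[n]{z'}{\exp^{-1}_{f^n_\omega z'}(f^n_\omega y^1)-\exp^{-1}_{f^n_\omega z'}(f^n_\omega y^2)}$ via Proposition \ref{prop:EstimateOnLnorms}, uses $\LnormAt[0]{z'}{\exp^{-1}_{z'}(y^k)}\le 2A\const{q}{3}_C\le r_0$ and \rref{eq:EstiamteOnStableManifold} to get a bound of order $e^{(a+8\eps)n}=e^{-\eps n}e^{(a+9\eps)n}$, and then chooses $\const{N}{6}$ exactly as you do. One small slip: the concentric cube $(\hat D_{j,m})_{\alpha_n}$ only extends $\hat D_{j,m}$ by $\alpha_n/2$ on each side, so your threshold ``drops below $\alpha_n$'' must be ``drops below $\alpha_n/2$''; this is a harmless factor of two in the definition of $\const{N}{6}$.

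Your step (b) is not in the paper at all (the paper simply asserts that it suffices to show $\eta^2_n\in(\hat D_{j,m})_{\alpha_n}$, taking the graph membership for granted), and your proposed argument for it does not work as written. First, \rref{eq:InvarianceOfW} is the inclusion $\graph(\psi_{(\omega,z),n+1})\subseteq F_{(\omega,z),n}(\graph(\psi_{(\omega,z),n}))$, which goes the wrong way for your forward induction: membership of $\exp^{-1}_{z_i}(y^2)$ in $\graph(\psi^2_{z_i,0})$ together with \rref{eq:InvarianceOfW} does not yield $F^n_0(\omega,z_i)\exp^{-1}_{z_i}(y^2)\in\graph(\psi^2_{z_i,n})$; to propagate a specific point one needs, at every intermediate time $j\le n$, that its $H_j(\omega,z_i)$-component stays in the shrinking domain $\uLball[\eta^2_{i,j}]{\delta'_{2,j}}{z_i,j}$, and one must then use the explicit representation $\psi_{j+1}=\pi_{E_{j+1}}\circ F_{(\omega,z_i),j}\circ(\psi_j\times\id)\circ\beta_j^{-1}$ with $\beta_j$ injective, not the set inclusion alone. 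Second, ``choose $\delta'_{2,0}$ large enough'' is not available: by \rref{eq:DefinitionOfqCzW} one has $q_C(z_i,W^2)\le\const{q}{3}_C/4$, hence $\delta_{2,0}<\const{q}{3}_C/8$, while the distance from $\exp^{-1}_{z_i}(y^2)$ to $\exp^{-1}_{z_i}(y^2_i)$ at time $0$ (and, after rescaling by $e^{(a+6\eps)j}$ resp. $e^{(a+8\eps)j}$, at small times $j$, against a domain radius $\delta_{2,0}e^{(a+11\eps)j}$) can be of order $\const{q}{3}_C$, so the orbit of $y^2$ need not lie on $\graph(\psi^2_{z_i,j})$ for small $j$; it is only captured at large times, which is precisely why the naive induction from time $0$ cannot close. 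Third, the concluding appeal to ``uniqueness of the graph value'' is vacuous: the danger is not that $\psi^2_{z_i,n}$ is multivalued but that $\exp^{-1}_{f^n_\omega z_i}(f^n_\omega W^2)$ need not be a graph over $H_n(\omega,z_i)$, so a priori $\exp^{-1}_{f^n_\omega z_i}(f^n_\omega y^2)$ could be a different point of the iterated manifold with the same $H$-component; the derivative bound on $\psi^2_{z_i,n}$ does not exclude this. So part (a) of your proposal reproduces the paper's argument correctly, but part (b), which you yourself identify as the main obstacle, is left with a genuine gap.
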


\begin{proof}
Let $n \geq \max\left\{\const{N}{1}(1/3);\const{N}{4}\right\}$ and $y^1 \in (f^n_\omega)^{-1}(D^1_{j,m}) \cap \localLSM{x}{\const{q}{3}_C}$. Then there exists $z' \in \Delta^l_\omega \cap \pLball[\Delta,\omega]{x}{\const{q}{3}_C/2}$ such that $y^1 \in W(\omega, z')$. Since $W^2$ is also transversal to $\collLSM{x}{\const{q}{3}_C}$ there exists a unique point $y^2 = W^2 \cap W(\omega,z') \cap \pLball[\Delta,\omega]{x}{\const{q}{3}_C}$. Thus we only need to check that for $n$ large $y^2 \in (f^n_\omega)^{-1}(\bar D^2_{j,m})$ or equivalent
\begin{align*}
\exp_{f^n_\omega z_i}^{-1} \left(f^n_\omega y^2\right) \in \exp_{f^n_\omega z_i}^{-1} \left(\bar D^2_{j,m} \right) =  \left\{ \left(\psi^2_{z_i,n}(\eta),\eta\right) : \eta \in \left(\hat D_{j,m}\right)_{\alpha_n} \right\}.
\end{align*}
If we denote $(\xi^{1}_0, \eta^{1}_0) := \exp_{z_i}^{-1}(y^1)$ and $(\xi^2_0,\eta^2_0) := \exp_{z_i}^{-1}(y^2)$ and
\begin{align*}
(\xi^k_n, \eta^k_n)&:= \exp_{f^n_\omega {z_i}}^{-1}(f^n_\omega y^k) = F_0^n(\omega,x)(\xi^k_0, \eta^k_0),
\end{align*}
for $k = 1,2$, then it suffices to prove that $\eta^2_n \in \left(\hat D_{j,m}\right)_{\alpha_n}$ for large $n$. By Lemma \ref{lem:EstimatesOnLnorm} and Proposition \ref{prop:EstimateOnLnorms} 
we have because of $z_i, z' \in \Delta^l_\omega$
\begin{align*}
\abs{\eta^1_n - \eta^2_n} &\leq 2 \LnormAt[n]{z_i}{\eta^1_n - \eta^2_n} \leq 2 \LnormAt[n]{z_i}{(\xi^1_n,\eta^1_n) - (\xi^2_n,\eta^2_n)}\\
&=2 \LnormAt[n]{z_i}{\exp_{f^n_\omega z_i}^{-1}(f^n_\omega y^1) - \exp_{f^n_\omega z_i}^{-1}(f^n_\omega y^2)}\\
&\leq 2A e^{2\eps n} \LnormAt[n]{z'}{\exp_{f^n_\omega z'}^{-1}(f^n_\omega y^1) - \exp_{f^n_\omega z'}^{-1}(f^n_\omega y^2)}.
\end{align*}
Let us denote $(\hat \xi^k_n ,\hat \eta^k_n) := \exp_{f^n_\omega z'}^{-1}(f^n_\omega y^k)$ where $\hat \xi^k_n \in E_n(\omega,z')$ and $\hat \eta^k_n \in H_n(\omega,z')$ for $k=1,2$. By the choice of $\const{q}{1}_C$ and $\const{q}{3}_C$ and since $z',y^1, y^2 \in \pLball[\Delta,\omega]{x}{\const{q}{3}_C}$ we have for $k = 1,2$
\begin{align*}
\LnormAt[0]{z'}{(\hat \xi^k_0 ,\hat \eta^k_0)} &= \LnormAt[0]{z'}{\exp_{z'}^{-1}(y^k)} = \LnormAt[0]{z'}{\exp_{z'}^{-1}(y^k) - \exp_{z'}^{-1}(z')}\\
&\leq A \LnormAt[0]{x}{\exp_{x}^{-1}(y^k) - \exp_{x}^{-1}(z')} \leq 2A\const{q}{3}_C \leq r_0.
\end{align*}
Thus because of $(\hat \xi^k_0 ,\hat \eta^k_0) = \exp^{-1}_{z'}(y^k) \in \exp^{-1}_{z'}\left(W_{loc}(\omega,z')\right)$ for $k=1,2$ we get with \rref{eq:EstiamteOnStableManifold}
\begin{align*}
\abs{\bar \eta^1_n - \bar \eta^2_n} &\leq 2A e^{2\eps n}\left(\LnormAt[n]{z'}{(\hat \xi^1_n ,\hat \eta^1_n)} + \LnormAt[n]{z'}{(\hat \xi^2_n ,\hat \eta^2_n)} \right) \leq 4A e^{2\eps n}r_0 e^{(a+6\eps)n} \\
&= \left(4Ar_0 e^{-\eps n}\right)e^{(a+9\eps)n}.
\end{align*}
By choosing $\const{N}{6} = \const{N}{6}(\alpha)$ so large such that $4Ar_0 e^{-\eps \const{N}{6}} \leq \frac{\alpha_0}{2}$ we get that for $n \geq \const{N}{6}$
\begin{align*}
\abs{\bar \eta^1_n - \bar \eta^2_n} \leq \frac{\alpha_n}{2}.
\end{align*}
This implies since $\bar \eta^1_n \in \hat D_{j,m}$ that $\bar \eta^2_n \in \left(\hat D_{j,m}\right)_{\alpha_n}$, which proves the lemma.
\end{proof}

Further we have the following lemma, which compares these sets with the set $Q(\bar z_j,r)$ for $d_n \leq r \leq 2d_n$. It is a stronger result than in \citep[Proposition II.10.1]{Katok86} because of the second inclusion in the proposition, which is an important ingredient for the proof of Lemma \ref{lem:CompSumUnion}.

\begin{proposition} \label{prop:QD}
Let $\theta \in (0, \frac{1}{6})$. For all $n \geq \max\left\{\const{N}{1}(\theta/2);\const{N}{4}\right\}$ and all $1 \leq j \leq \const{M}{2}_n$ one has
\begin{align*}
Q(\bar z_j, d_n) \subset Q(\bar z_j, 2(1-2\theta)d_n) \subset \bigcup_{m=1}^{N_j} D^1_{j,m} \subset Q(\bar z_j, 2d_n).
\end{align*}
\end{proposition}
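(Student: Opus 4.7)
The plan is to establish the three inclusions separately, with the first being immediate and the middle two both reducing cleanly to Lemma \ref{lem:ProjectionLemma} applied with parameter $\alpha = \theta/2$.

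First I would dispose of the inclusion $Q(\bar z_j, d_n) \subset Q(\bar z_j, 2(1-2\theta)d_n)$, which is trivial since $\theta \in (0,1/6)$ forces $2(1-2\theta) > 4/3 > 1$. Next, for the middle inclusion, take any $z' \in Q(\bar z_j, 2(1-2\theta)d_n)$. Because $\bar z_j \in \tilde W^1_n(z_i,y^1_i,\tfrac12\delta'_n)$, Proposition \ref{prop:ComparsionWandQ}(a) puts $\hat Q(\bar z_j, 2(1-2\theta)d_n)$ inside $\hat W^1_n(z_i,y^1_i,\tfrac{3}{4}\delta'_n)$, so the upper half of Lemma \ref{lem:ProjectionLemma} applied with $F = E_n(\omega,z_i)$, $q = 2(1-2\theta)d_n$ and $\alpha = \theta/2$ (valid for $n \geq \const{N}{1}(\theta/2)$) gives
\begin{equation*}
\pi^n_{z_i}(\hat z') \in \uballat[\pi^n_{z_i}(\hat{\bar z}_j)]{(1+\theta/2)\cdot 2(1-2\theta)d_n}{n}{z_i}.
\end{equation*}
A short calculation shows $(1+\theta/2)\cdot 2(1-2\theta) = 2 - 3\theta - 2\theta^2 \leq 2(1-\theta)$, so $\pi^n_{z_i}(\hat z')$ lies in the ball $\uballat[\pi^n_{z_i}(\hat{\bar z}_j)]{2(1-\theta)d_n}{n}{z_i}$ which is by construction covered by the cubes $\{\hat D_{j,m}\}$. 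Thus $\pi^n_{z_i}(\hat z') \in \hat D_{j,m}$ for some $m$, and since $\hat z'$ lies on the graph of $\psi^1_{z_i,n}$ and $D^1_{j,m}$ is exactly the piece of that graph over $\hat D_{j,m}$, we conclude $z' \in D^1_{j,m}$.

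For the third inclusion, I would note that each cube $\hat D_{j,m}$ has Euclidean diameter at most $\theta d_n$ and meets the ball $\uballat[\pi^n_{z_i}(\hat{\bar z}_j)]{2(1-\theta)d_n}{n}{z_i}$, so $\hat D_{j,m} \subset \uballat[\pi^n_{z_i}(\hat{\bar z}_j)]{(2-\theta)d_n}{n}{z_i}$. Applying the lower half of Lemma \ref{lem:ProjectionLemma} to the same $F$, with $q = 2d_n$ and $\alpha = \theta/2$, gives
\begin{equation*}
\uballat[\pi^n_{z_i}(\hat{\bar z}_j)]{(2-\theta)d_n}{n}{z_i} \subset \pi^n_{z_i}\!\left(\hat Q(\bar z_j, 2d_n)\right),
\end{equation*}
and by Proposition \ref{prop:ComparsionWandQ}(b) we have $\hat Q(\bar z_j, 2d_n) \subset \hat W^1_n$, a graph over $H_n(\omega,z_i)$ on which $\pi^n_{z_i}$ is injective. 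Therefore every point of $D^1_{j,m}$, being the unique preimage in that graph of a point in $\hat D_{j,m}$, must lie in $\hat Q(\bar z_j, 2d_n)$, giving $z' \in Q(\bar z_j, 2d_n)$.

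Since both applications of Lemma \ref{lem:ProjectionLemma} use $\alpha = \theta/2$, the common threshold $n \geq \max\{\const{N}{1}(\theta/2),\const{N}{4}\}$ is exactly what the statement requires. The only real bookkeeping hurdle is the choice of $\alpha$: it must be small enough to absorb both the $2\theta$-contraction in the middle inclusion and the $\theta d_n$-diameter slack in the third; it happens that $\alpha = \theta/2$ handles both simultaneously, which is why the single constant $\const{N}{1}(\theta/2)$ suffices.
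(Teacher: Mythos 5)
Your proof is correct and follows essentially the same route as the paper: reduce all inclusions to statements about the projections $\pi^n_{z_i}$ via Lemma \ref{lem:ProjectionLemma}, use the covering property and the diameter bound $\theta d_n$ of the cubes $\hat D_{j,m}$, and transfer back through the graph of $\psi^1_{z_i,n}$. The only cosmetic difference is that you use $\alpha = \theta/2$ in the middle inclusion (with the estimate $(1+\theta/2)\,2(1-2\theta) \leq 2(1-\theta)$) where the paper takes $\alpha = \theta/(1-2\theta)$, which changes nothing since $\const{N}{1}(\theta/2)$ dominates and is exactly the threshold in the statement.
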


\begin{proof}
The idea is basically taken from \citep[Proposition II.10.1]{Katok86}. By the remark after Lemma \ref{lem:ProjectionLemma} we have $\const{N}{1}(\theta/2) \geq \const{N}{1}(\theta) \geq \const{N}{1/3}$. Let us recall that for $\bar z_j \in \tilde W^1_n\left(z_i,y^1_i,\delta'_{n}\right)$ we denote $\hat Q(\bar z_j,r) := \exp_{f^n_\omega z_i}^{-1}(Q(\bar z_j,r))$. If we are able to show
\begin{align} \label{eq:firstInclusion1}
\pi^n_{z_i}(\hat Q(\bar z_j, d_n)) \subset \pi^n_{z_i}(\hat Q(\bar z_j, 2(1-2\theta)d_n)) \subset \bigcup_{m=1}^{N_j} \hat D_{j,m} \subset \pi^n_{z_i}(\hat Q(\bar z_j, 2d_n))
\end{align}
then the application of $\psi^1_{z_i,n}$ to both sides yields the assertion. The first inclusion is obvious since $\theta \in (0,1/6)$. For the second inclusion in \rref{eq:firstInclusion1} let us apply Lemma \ref{lem:ProjectionLemma} with $\alpha = \frac{\theta}{1-2\theta} \geq \theta$, $F = E_n(\omega,z_i)$ and $q = 2(1-2\theta)d_n$ then we have that for $n \geq \max\left\{\const{N}{1}(\theta);\const{N}{4}\right\}$
\begin{align*}
\pi^n_{z_i}(\hat Q(\bar z_j,2(1-2\theta)d_n)) \subset \uballat[\pi^n_{z_i}(\hat{\bar z}_j)]{2(1-\theta)d_n}{n}{z_i}.
\end{align*}
Since $\{\hat D_{j,m}\}_{1 \leq m \leq N_j}$ form a covering of $\uballat[\pi^n_{z_i}(\hat{\bar z}_j)]{2(1-\theta)d_n}{n}{z_i}$ and $\theta \in (0,1/6)$ we get for $n \geq \max\left\{\const{N}{1}(\theta);\const{N}{4}\right\}$
\begin{align*}
\pi^n_{z_i}(\hat Q(\bar z_j,2(1-2\theta)d_n)) \subset \uballat[\pi^n_{z_i}(\hat{\bar z}_j)]{2(1-\theta)d_n}{n}{z_i} \subset \bigcup_{m = 1}^{N_j} \hat D_{j,m},
\end{align*}
which proves the second inclusion in \rref{eq:firstInclusion1}. For the third one observe that $\diam\left(\hat D_{j,m}\right) = \theta d_n$ and since $\hat D_{j,m} \cap \uballat[\pi^n_{z_i}(\hat{\bar z}_j)]{2(1-\theta)d_n}{n}{z_i} \neq \emptyset$ for any $1 \leq m \leq N_j$ we have
\begin{align*}
\bigcup_{m = 1}^{N_j} \hat D_{j,m} \subset \uballat[\pi^n_{z_i}(\hat{\bar z}_j)]{(2-\theta)d_n}{n}{z_i}.
\end{align*}
If we again apply Lemma \ref{lem:ProjectionLemma} to $\alpha = \frac{\theta}{2}$, $F = E_n(\omega,z_i)$ and $q = 2d_n$ then we get for any $n \geq  \max\left\{\const{N}{1}(\theta/2);\const{N}{4}\right\}$ 
\begin{align*}
\uballat[\pi^n_{z_i}(\hat{\bar z}_j)]{(2-\theta)d_n}{n}{z_i} \subset \pi^n_{z_i}(\hat Q(\bar z_j, 2d_n)),
\end{align*}
which gives the third inclusion in \rref{eq:firstInclusion1}.
\end{proof}

By Proposition \ref{prop:QD} and Lemma \ref{lem:Woverlines} we immediately get
\begin{align} \label{eq:PropertiesOfDis}
\overline W^1_n(1/2) \subset \bigcup_{j=1}^{\const{M}{2}} \bigcup_{m=1}^{N_j} D^1_{j,m} \subset \overline W^1_n(1).
\end{align}
Since $\Int(D^1_{j,m}) \cap \Int(D^1_{j,m'}) = \emptyset$ for $m \neq m'$, it follows from Lemma \ref{lem:Woverlines} that there exists some number $L' >0$ such that for every $n \geq \max\left\{\const{N}{1}(\theta/2);\const{N}{4}\right\}$ the covering of $\overline W^1_n(1/2)$ by the sets $\left\{D^1_{j,m}\right\}_{\substack{1 \leq j\leq \const{M}{2}\\1\leq m\leq N_j}}$ is of multiplicity at most $L'$. We will denote this covering by $\mathcal{A}$. Let us remark that $L'$ is the number $L$, which originally comes from Lemma \ref{lem:Woverlines}, and additionally the multiplicity of the covering $\{D^1_{j,m}\}_{1 \leq m \leq N_j}$. Since in following lemma we are interested in the comparision of the sum of the Lebesgue measures with the Lebesgue measure of the union the second multiplicity is neglectable, since its Lebesgue measure is $0$.

We will now choose a subcover of $\mathcal{A}$ which has multiplicity one, except on a set of very small measure. To obtain this we proceed consecutively from the ball $Q(\bar z_j,2d_n)$ to the ball $Q(\bar z_{j+1},2d_n)$ for $j = 1, 2,\dots, \const{M}{2}-1$: in the $(j+1)\textsuperscript{th}$ step we eleminate all sets $D^1_{j+1,m}$ with $D^1_{j+1,m} \subset \bigcup_{k = 1}^j \bigcup_{m=1}^{N_k} D^1_{k,m}$ or $D^1_{j+1,m} \subset Q(\bar z_{j+1},2(1-2\theta)d_n)^c$. Let $\left\{D^1_i\right\}_{1\leq i\leq N}$ be the covering of $\overline W^1_n(1/2)$ formed by all remaining elements of $\mathcal{A}$. Then we have the following lemma, which is \citep[Lemma II.10.2]{Katok86}.

\begin{lemma} \label{lem:CompSumUnion}
There exists a constant $\const{C}{4}$ such that for every $0 < \theta < \min\left\{\frac{1}{18};\frac{1}{3\const{C}{1}}\right\}$ there exists $\const{N}{7} = \const{N}{7}(\theta)\geq \max\left\{\const{N}{1}(\theta/2);\const{N}{4}\right\}$ such that for every $n \geq \const{N}{7}$ we have
\begin{align*}
\abs{\frac{\sum_{i=1}^N\lambda^1_0\left((f^n_\omega)^{-1}(D^1_i)\right)}{\lambda^1_0\left((f^n_\omega)^{-1}(\bigcup_{i=1}^N D^1_i)\right)} -1 } \leq \const{C}{4}(\theta + C).
\end{align*}
\end{lemma}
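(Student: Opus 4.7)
The overall strategy is to bound the overcounting via the disjointness of the original cubical cover at each level $j$ and the containment of multi-level overlaps in thin annuli. Grouping the retained sets by level $R_j \subset \{1,\dots,N_j\}$ and setting $U_j := \bigcup_{m\in R_j} D^1_{j,m}$, the disjointness of the interiors of $\{\hat D_{j,m}\}_{m}$ (inherited by $\{D^1_{j,m}\}_m$ via the $C^1$ map $\psi^1_{z_{i(j)},n}$ and preserved under the diffeomorphism $(f^n_\omega)^{-1}$) gives
\begin{align*}
\sum_{i=1}^N \lambda^1_0\bigl((f^n_\omega)^{-1}(D^1_i)\bigr) = \sum_j \lambda^1_0\bigl((f^n_\omega)^{-1}(U_j)\bigr).
\end{align*}
Writing $S:=\bigcup_{j\ne k} U_j\cap U_k$ and using the multiplicity bound $L'$ of the retained cover, the excess
\begin{align*}
\mathcal E := \sum_j \lambda^1_0\bigl((f^n_\omega)^{-1}(U_j)\bigr) - \lambda^1_0\bigl((f^n_\omega)^{-1}\bigl(\textstyle\bigcup_j U_j\bigr)\bigr) \leq (L'-1)\,\lambda^1_0\bigl((f^n_\omega)^{-1}(S)\bigr).
\end{align*}

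The heart of the argument is the inclusion $S \subset \bigcup_k A_n(\bar z_k, 3\theta)$ for $n$ sufficiently large. Fix $z \in U_j \cap U_k$ with $j>k$ and write $z \in D^1_{j,m_*}$ with $m_*\in R_j$. The retention rule supplies a point $w\in D^1_{j,m_*}$ with $w\notin \bigcup_{k'<j}\bigcup_{m'} D^1_{k',m'}$; Proposition \ref{prop:QD} applied at level $k<j$ gives $\bigcup_{m'} D^1_{k,m'} \supset Q(\bar z_k, 2(1-2\theta)d_n)$, whence $\tilde d(w, \bar z_k) > 2(1-2\theta)d_n$. On the other hand, the bound $\LnormAt{z_{i(j)}}{D_\eta \psi^1_{z_{i(j)},n}} \leq C e^{-7d\eps n}$ from \rref{eq:DpsiEst} combined with Lemma \ref{lem:EstimatesOnLnorm} iii) produces a Euclidean bound $2AC e^{(2-7d)\eps n}$ on the derivative of $\psi^1_{z_{i(j)},n}$, so the graph $D^1_{j,m_*}$ of this map over the cube $\hat D_{j,m_*}$ (of diameter $\theta d_n$) has induced-Euclidean diameter at most $\theta d_n (1 + o(1))$. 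Combined with $z \in D^1_{k,m'} \subset Q(\bar z_k, 2d_n)$, for $n$ large one gets $2d_n(1-3\theta) \leq \tilde d(z,\bar z_k) \leq 2d_n$, i.e.\ $z \in A_n(\bar z_k, 3\theta)$.

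With the inclusion in hand, Lemma \ref{lem:EstimateOnTheVolume} applies (since $3\theta<1/6$ by $\theta<1/18$) and yields $\lambda^1_n(A_n(\bar z_k, 3\theta)) \leq 3 \const{C}{1} \theta\, \lambda^1_n(Q(\bar z_k, d_n))$. Because both sets lie in $\tilde W^1_n(z_{i(k)}, y^1_{i(k)}, \delta'_n)$, Lemma \ref{lem:ComparionsOfDeterminantes} makes the Jacobian of $F^{-n}_0(\omega,z_{i(k)})$ vary by at most the factor $1+\const{C}{2}C$ on that patch, so a change of variables upgrades this to $\lambda^1_0((f^n_\omega)^{-1}(A_n(\bar z_k,3\theta))) \leq 3\const{C}{1}\theta(1+\const{C}{2}C)\,\lambda^1_0((f^n_\omega)^{-1}(Q(\bar z_k,d_n)))$. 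Summing over $k$, invoking the multiplicity-$L$ bound from Lemma \ref{lem:Woverlines} iii) preserved under the diffeomorphism $(f^n_\omega)^{-1}$, and comparing $\bigcup_k Q(\bar z_k,d_n) \subset \overline W^1_n(1)$ with $\bigcup_i D^1_i \supset \overline W^1_n(1/2)$ (both squeezed between graphs of $\psi^1$ over concentric Lyapunov balls of radii $\delta'_n/2$ and $\delta'_n$, whose volumes agree up to a factor depending only on $d-k$, transferred to pullbacks via another application of Lemma \ref{lem:ComparionsOfDeterminantes}), produces $\mathcal E \leq \tilde C\, \theta (1+\const{C}{2}C)\, \lambda^1_0((f^n_\omega)^{-1}(\bigcup_i D^1_i))$, and the cross term $\theta C$ is absorbed into $\const{C}{4}C$. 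I expect the main obstacle to be the diameter bound for $D^1_{j,m_*}$, which succeeds only because the rate $-7d\eps$ in \rref{eq:DpsiEst} beats the $+2\eps$ inflation of Lemma \ref{lem:EstimatesOnLnorm} iii) when converting Lyapunov to Euclidean estimates; a secondary technicality is the last volume comparison, which needs that every $D^1_{k,m}$ excluded by the selection procedure either repeats an earlier retained set or sits in an annulus of comparably small measure.
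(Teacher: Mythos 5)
Your overall strategy is the paper's: bound the overcounting by showing that all multiple coverage sits inside the annuli $A_n(\bar z_k,3\theta)$, control the relative volume of these annuli by Lemma \ref{lem:EstimateOnTheVolume}, transfer the estimate to the pullbacks via the determinant comparison (your direct use of Lemma \ref{lem:ComparionsOfDeterminantes} with a change of variables is exactly what Lemma \ref{lem:CompPullingBack} packages, and is applied, as it must be, within a single patch $z_{i(k)}$), and absorb everything with the multiplicity constants $L,L'$. Your bookkeeping differs only cosmetically from the paper's good/bad index decomposition: you bound the excess $\sum_j - \bigcup_j$ by $(L'-1)$ times the measure of the overlap set $S$, and your derivation of $S\subset\bigcup_k A_n(\bar z_k,3\theta)$ (witness point $w$ from the retention rule, $Q(\bar z_k,2(1-2\theta)d_n)\subset\bigcup_{m'}D^1_{k,m'}$ from Proposition \ref{prop:QD}, diameter $\lesssim\theta d_n$ of $D^1_{j,m_*}$ from \rref{eq:DpsiEst} and Lemma \ref{lem:EstimatesOnLnorm}) is a legitimate, in fact more explicit, version of the paper's assertion \rref{eq:firstEstimate1}.

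The one step that does not go through as written is your final comparison of $\lambda^1_0\bigl((f^n_\omega)^{-1}(\bigcup_k Q(\bar z_k,d_n))\bigr)$ with $\lambda^1_0\bigl((f^n_\omega)^{-1}(\bigcup_i D^1_i)\bigr)$ by squeezing both between $\overline W^1_n(1/2)$ and $\overline W^1_n(1)$. Comparing the volumes of $\overline W^1_n(1)=\bigcup_i\tilde W^1_n(z_i,y_i,\delta'_n)$ and $\overline W^1_n(1/2)$ requires a multiplicity bound on the family $\{\tilde W^1_n(z_i,y_i,\cdot)\}_{1\le i\le\const{M}{1}}$, which is nowhere established (only the coverings by $Q(\bar z_j,d_n)$ and by $\{D^1_{j,m}\}$ carry the bounds $L$ and $L'$, and $\const{M}{1}$ itself is uncontrolled in $n$); moreover Lemma \ref{lem:ComparionsOfDeterminantes} compares determinants only along manifolds attached to one fixed base point $z$, so it cannot transfer a volume comparison of unions taken over different base points $z_i$ to the pullbacks. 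The repair is the route the paper takes and it costs nothing: by Proposition \ref{prop:QD} together with the selection rule, each $Q(\bar z_k,d_n)\subset Q(\bar z_k,2(1-2\theta)d_n)$ is already covered by the \emph{retained} sets, so $\bigcup_k Q(\bar z_k,d_n)\subset\bigcup_{i=1}^N D^1_i$ and the needed comparison holds with constant $1$; combined with $\sum_k\lambda^1_0((f^n_\omega)^{-1}(Q(\bar z_k,d_n)))\le L\,\lambda^1_0((f^n_\omega)^{-1}(\bigcup_k Q(\bar z_k,d_n)))$ this closes your chain and yields the stated bound with a constant of the form $\const{C}{4}=c\,L'L\const{C}{1}\const{C}{3}$ (also record $\const{N}{3}(3\theta)$ in your choice of $\const{N}{7}(\theta)$, as the application of Lemma \ref{lem:EstimateOnTheVolume} with parameter $3\theta$ requires it).
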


\begin{proof}
This is basically \citep[Lemma II.10.1]{Katok86}, but varies at some point, inparticular the definition of {\it good} and {\it bad} sets.

Let us consider $n \geq \max\left\{\const{N}{1}(\theta/2);\const{N}{4}\right\}$. Our first aim is to divide the set $\{1, \dots, N\}$ into a {\it bad} set $B$ and a {\it good} one $G$, in the sense that for $i \in G$ we have $\Int(D_i^1 \cap D^1_{i'}) = \emptyset$ for all $i' \neq i$. By the properties of the function $\psi^1_{z_i,n}$ (cf. Theorem \ref{thm:ExsitenceOfPsi}) let us first observe that $\diam(D^1_i) \leq 2\theta d_n$. The consecutive construction of the covering $\{D^1_i\}_{1 \leq i \leq N}$ and the second inclusion of Proposition \ref{prop:QD} imply that non-empty intersection of the interiors only occurs around the boundary of the sets $Q(\bar z_j, 2(1- 2\theta)d_n)$. Let us define
\begin{align*}
\begin{cases}
i \in B &\text{if there exists $j$ such that } D^1_i \cap Q(\bar z_j, 2(1- 2\theta)d_n)^c \cap Q(\bar z_j, 2(1- \theta)d_n) \neq \emptyset\\
i \in G &\text{otherwise}.
\end{cases}
\end{align*}
Then $i \in G$ satisfies $\Int(D_i^1 \cap D^1_{i'}) = \emptyset$ for all $i' \neq i$. Because of $\diam(D^1_i) \leq 2\theta d_n$ we get
\begin{align} \label{eq:firstEstimate1}
\bigcup_{i\in B} D^1_i \subset \bigcup_{j = 1}^{\const{M}{2}} \left\{z' \in Q(\bar z_j, 2d_n) : \tilde d( z', \partial Q(\bar z_j, 2d_n)) \leq 6\theta d_n \right\} = \bigcup_{j = 1}^{\const{M}{2}} A(\bar z_j, 3\theta)
\end{align}
where $\tilde d$ is the induced metric on $f^n_\omega(W^1)$ by the Euclidean metric and $A(z_j,3\theta)$ is defined before Lemma \ref{lem:EstimateOnTheVolume}. As mentioned above the multiplicity of the covering $\{D^1_i\}_{1 \leq i \leq N}$ does not exceed $L'$ we have
\begin{align*}
\sum_{i=1}^N \lambda^1_0\left((f^n_\omega)^{-1}(D^1_i)\right) &= \sum_{i\in G} \lambda^1_0\left((f^n_\omega)^{-1}(D^1_i)\right) + \sum_{i\in B} \lambda^1_0\left((f^n_\omega)^{-1}(D^1_i)\right) \\
&\leq \sum_{i\in G} \lambda^1_0\left((f^n_\omega)^{-1}(D^1_i)\right) + L'  \lambda^1_0\left(\bigcup_{i\in B}(f^n_\omega)^{-1}(D^1_i)\right) \\
&= \lambda^1_0\left(\bigcup_{i\in G}(f^n_\omega)^{-1}(D^1_i)\right) + L'  \lambda^1_0\left(\bigcup_{i\in B}(f^n_\omega)^{-1}(D^1_i)\right) \\
&\leq \lambda^1_0\left(\bigcup_{i=1}^N(f^n_\omega)^{-1}(D^1_i)\right) + L'  \lambda^1_0\left(\bigcup_{i\in B}(f^n_\omega)^{-1}(D^1_i)\right).
\end{align*}
Hence we get
\begin{align} \label{eq:secondEstimate1}
1 \leq \frac{\sum_{i=1}^N \lambda^1_0\left((f^n_\omega)^{-1}(D^1_i)\right)}{\lambda^1_0\left(\bigcup_{i=1}^N(f^n_\omega)^{-1}(D^1_i)\right)} \leq 1 + L' \frac{\lambda^1_0\left(\bigcup_{i\in B}(f^n_\omega)^{-1}(D^1_i)\right)}{\lambda^1_0\left(\bigcup_{i=1}^N(f^n_\omega)^{-1}(D^1_i)\right)}
\end{align}
and it suffices to estimate the last term in \rref{eq:secondEstimate1}. Because of \rref{eq:firstEstimate1}, Proposition \ref{prop:QD} and the fact that the multiplicity of the covering $\{Q(\bar z_j, d_n)\}_j$ is bounded by $L$ we have
\begin{align} \label{eq:thirdEstimate1}
\frac{\lambda^1_0\left(\bigcup_{i\in B}(f^n_\omega)^{-1}(D^1_i)\right)}{\lambda^1_0\left(\bigcup_{i=1}^N(f^n_\omega)^{-1}(D^1_i)\right)} &\leq \frac{\lambda^1_0\left(\bigcup_{j = 1}^{\const{M}{2}} (f^n_\omega)^{-1}(A(\bar z_j, 3\theta))\right)}{\lambda^1_0\left(\bigcup_{j=1}^{\const{M}{2}}(f^n_\omega)^{-1}(Q(\bar z_j, d_n))\right)} \notag \\
&\leq L \frac{\sum_{j = 1}^{\const{M}{2}} \lambda^1_0\left((f^n_\omega)^{-1}(A(\bar z_j, 3\theta))\right)}{\sum_{j=1}^{\const{M}{2}}\lambda^1_0\left((f^n_\omega)^{-1}(Q(\bar z_j, d_n))\right)}.
\end{align}
If numbers $a_1,\dots,a_N, b_1, \dots, b_N > 0$ satisfy $\frac{a_i}{b_i} \leq h$ for all $i$, then clearly we have $\frac{\sum_i a_i}{\sum_i b_i} \leq h$. By this remark it suffices to estimate each fractional in \rref{eq:thirdEstimate1} on its own. So let us fix some $j$, $1 \leq j \leq \const{M}{2}$, and denote $A^1 := A(\bar z_j,3\theta) \cup Q(\bar z_j, d_n)$ and $A^2 := Q(\bar z_j, d_n)$. Choosing $\theta < \frac{1}{18}$ from Lemma \ref{lem:EstimateOnTheVolume} we obtain a constant $\const{C}{1}$ such that for every $n \geq \const{N}{7}(\theta) := \max\left\{\const{N}{3}(3\theta);\const{N}{1}(\theta/2);\const{N}{4}\right\}$ we have
\begin{align*}
1 \leq \frac{\lambda^1_n(A^1)}{\lambda^1_n(A^2)} = 1 + \frac{\lambda^1_n(A(\bar z_j,3\theta))}{\lambda^1_n(Q(\bar z_j, d_n))} \leq 1 + 3\const{C}{1}\theta,
\end{align*}
which yields
\begin{align*}
\abs{\frac{\lambda^1_n(A^1)}{\lambda^1_n(A^2)} - 1} \leq 3\const{C}{1}\theta.
\end{align*}
Thus by application of Lemma \ref{lem:CompPullingBack} we achieve a constant $\const{C}{3}$ such that for $\tau = 3\const{C}{1}\theta < 1$ we have for $n \geq \const{N}{7}(\theta)$
\begin{align*}
\abs{\frac{\lambda^1_0((f^n_\omega)^{-1}(A^1))}{\lambda^1_0((f^n_\omega)^{-1}(A^2))} - 1} \leq \const{C}{3}(3\const{C}{1}\theta + C).
\end{align*}
By definition of $A^1$ and $A^2$ this implies for $n \geq \const{N}{7}$
\begin{align*}
\frac{\lambda^1_0\left((f^n_\omega)^{-1}(A(\bar z_j, 3\theta))\right)}{\lambda^1_0\left((f^n_\omega)^{-1}(Q(\bar z_j, d_n))\right)} \leq \const{C}{3}(3\const{C}{1} \theta + C),
\end{align*}
which finally finishes the proof with $\const{C}{4} := 3L'L\const{C}{3}\const{C}{1}$.
\end{proof}

The next proposition is the last one before we will start to prove the absolute continuity theorem, we will state the proof for sake of completeness although it is basically \citep[Proposition II.10.2]{Katok86}.

\begin{proposition} \label{prop:CompD1DbarD2}
There exists a constant $\const{C}{5}$ such that for any $\theta \in (0,1)$ there exists $\const{N}{8}= \const{N}{8}(\theta)\geq \max\left\{\const{N}{1}(\theta/2);\const{N}{4}\right\}$ such that for any $0 < \alpha_0 < \frac{\theta d_0}{\sqrt{d-k}}$, $n \geq \const{N}{8}$ and $1 \leq i \leq N$ one has
\begin{align*}
\abs{\frac{\lambda^2_n(\bar D^2_i)}{\lambda^1_n(D^1_i)} -1} \leq \const{C}{5}\left(2\theta + \frac{\alpha_0}{\theta d_0}(1 + \theta)\right).
\end{align*}
\end{proposition}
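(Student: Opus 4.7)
The plan is to compute both volumes by pulling back to the common model space $H_n(\omega,z_i)$ via the graph representations $\psi^1_{z_i,n}$ and $\psi^2_{z_i,n}$, and then to exploit two facts: (a) the projections differ only by the cube inflation controlled by \rref{eq:EstimateOnTheDVolumes}, and (b) the functions $\psi^k_{z_i,n}$ have exponentially small derivative by Theorem \ref{thm:ExsitenceOfPsi}, so both graphs are essentially flat for large $n$.

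First, since $\exp_{f^n_\omega z_i}$ is a pure translation on $\R^d$, it preserves the induced $(d-k)$-dimensional Euclidean volume on submanifolds. By the area formula,
\begin{align*}
\lambda^1_n(D^1_i) &= \int_{\hat D_{j,m}} J_1(\eta)\,\dx\eta,
& J_k(\eta) &:= \sqrt{\determinante\bigl(I + (D_\eta\psi^k_{z_i,n})^{*}\, D_\eta\psi^k_{z_i,n}\bigr)}, \\
\lambda^2_n(\bar D^2_i) &= \int_{(\hat D_{j,m})_{\alpha_n}} J_2(\eta)\,\dx\eta,
\end{align*}
where both derivatives and integrals are taken with respect to the Euclidean inner product on $T_{f^n_\omega z_i}\R^d$.

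Next, I would convert the Lyapunov bound of Theorem \ref{thm:ExsitenceOfPsi}, namely $\LnormAt[n]{z_i}{D_\eta\psi^k_{z_i,n}} \leq Ce^{-7d\eps n}$, into a Euclidean bound. Combining the two-sided norm comparison of Lemma \ref{lem:EstimatesOnLnorm} (constant $A e^{2\eps n}$ in one direction and $1/2$ in the other) yields $\abs{D_\eta\psi^k_{z_i,n}} \leq 4AC\, e^{-(7d-2)\eps n}$. Crucially, the dimension factor $7d$ in the exponent of \rref{eq:DpsiEst} was engineered precisely to dominate the $2\eps n$ growth coming from the norm comparison, so the Euclidean operator norm decays exponentially in $n$. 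Hence for every $\theta \in (0,1)$ there exists $\const{N}{8}(\theta) \geq \max\{\const{N}{1}(\theta/2), \const{N}{4}\}$ such that $1 \leq J_k(\eta) \leq 1 + \theta$ for all $n \geq \const{N}{8}(\theta)$, $k=1,2$, and all $\eta$ in the relevant domain.

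Combining the two estimates, for $n \geq \const{N}{8}(\theta)$,
\begin{align*}
\frac{\lambda^2_n(\bar D^2_i)}{\lambda^1_n(D^1_i)}
\leq \frac{1+\theta}{1}\cdot\frac{\vol\bigl((\hat D_{j,m})_{\alpha_n}\bigr)}{\vol(\hat D_{j,m})}
\leq (1+\theta)\left(1 + 2^{d-k}\sqrt{d-k}\,\frac{\alpha_0}{\theta d_0}\right),
\end{align*}
using \rref{eq:EstimateOnTheDVolumes} in the last step; the symmetric reverse inequality follows by swapping $J_1$ and $J_2$ and noting $\vol((\hat D_{j,m})_{\alpha_n}) \geq \vol(\hat D_{j,m})$. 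Expanding the product and collecting terms, the deviation from $1$ is controlled by $2\theta$ plus $\tfrac{\alpha_0}{\theta d_0}(1+\theta)$ up to a constant $\const{C}{5}$ depending only on $d-k$, which is exactly the claimed bound.

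The only genuinely delicate step is the passage from Lyapunov to Euclidean operator norm on $D\psi^k_{z_i,n}$; everywhere else the proof is bookkeeping around the area formula and \rref{eq:EstimateOnTheDVolumes}. Once the Euclidean decay $\abs{D_\eta\psi^k_{z_i,n}} \to 0$ is established, choosing $\const{N}{8}(\theta)$ so large that $\det(I + (D\psi^k)^{*}(D\psi^k))^{1/2} \in [1,1+\theta]$ uniformly over the domain is routine, and the final estimate is obtained by a two-line manipulation.
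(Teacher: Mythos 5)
Your argument is correct and follows essentially the same route as the paper: reduce to the model space via translation-invariance of $\exp$, bound the graph volumes over $\hat D_{j,m}$ and $(\hat D_{j,m})_{\alpha_n}$ using the Euclidean smallness of $D\psi^k_{z_i,n}$ obtained by combining \rref{eq:DpsiEst} with Lemma \ref{lem:EstimatesOnLnorm} (the paper packages this comparison as Proposition \ref{prop:Appendix1} rather than invoking the area formula directly), and then use \rref{eq:EstimateOnTheDVolumes} for the cube inflation. The resulting constant and the choice of $\const{N}{8}(\theta)$ match the paper's proof up to harmless numerical factors.
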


\begin{proof}
Let us fix some $n \geq \max\left\{\const{N}{1}(\theta/2);\const{N}{4}\right\}$ and $1 \leq i \leq N$. Then there exists $i'$, $1\leq i' \leq \const{M}{1}$, and $j'$, $1 \leq j' \leq \const{M}{2}$ and $m$, $1\leq m \leq N_{j'}$ such that
\begin{align*}
D^1_i = D^1_{j',m} &= \exp_{f^n_\omega z_{i'}} \left(\left\{ (\psi^1_{z_{i'},n}(v),v) : v \in \hat D_{j',m} \right\}\right)\\
\bar D^2_i = \bar D^2_{j',m} &=\exp_{f^n_\omega z_{i'}} \left(\left\{ (\psi^2_{z_{i'},n}(v),v) : v \in \left(\hat D_{j',m}\right)_{\alpha_n} \right\}\right).
\end{align*}
Let us denote
\begin{align*}
\hat D^1_i := \exp^{-1}_{f^n_\omega z_{i'}}( D^1_i) \quad \text{ and } \quad \hat{\bar D}^2_i := \exp^{-1}_{f^n_\omega z_{i'}}(\bar D^2_i).
\end{align*}
Then we clearly have
\begin{align*}
\frac{\lambda^2_n(\bar D^2_i)}{\lambda^1_n(D^1_i)} = \frac{\lambda^2_n(\bar D^2_i)}{\hat\lambda_n^2(\hat{\bar D}^2_i)} \cdot \frac{\hat\lambda_n^2(\hat{\bar D}^2_i)}{\vol((\hat D_{j',m})_{\alpha_n})} \cdot \frac{\vol((\hat D_{j',m})_{\alpha_n})}{\vol(\hat D_{j',m})} \cdot \frac{\vol(\hat D_{j',m})}{\hat\lambda_n^1(\hat D^1_i)} \cdot \frac{\hat\lambda_n^1(\hat D^1_i)}{\lambda^1_n(D^1_i)},
\end{align*}
where $\hat \lambda^k_n$ denotes the induced Lebesgue measure on $\hat W^k_n(z_{i'},y^k_{i'},\delta'_n)$ for $k = 1,2$ and $\vol$ the $(d-k)$-dimensional volume on $H_n(\omega,z_{i'})$. Since the exponential function is a simple translation on $T_{f^n_\omega z_{i'}}\R^d$ we have
\begin{align*}
\frac{\lambda^2_n(\bar D^2_i)}{\hat\lambda_n^2(\hat{\bar D}^2_i)} = \frac{\hat\lambda_n^1(\hat D^1_i)}{\lambda^1_n(D^1_i)} = 1.
\end{align*}
For $n \geq \max\left\{\const{N}{1}(\theta/2);\const{N}{4}\right\}$ we have because of \rref{eq:hatDisFine} that $\left(\hat D_{j',m}\right)_{\alpha_n} \subset \uLball[\eta^k_{i',n}]{\delta'_n}{z_{i'},n}$, where as before $\eta^k_{i',n} = \pi^n_{z_{i'}}(F^n_0(\omega,z_{i'}) y^k_{i'})$ for $k = 1,2$ and thus because of Lemma \ref{lem:EstimatesOnLnorm} and Theorem \ref{thm:ExsitenceOfPsi}
\begin{align*}
\sup_{\eta \in \left(\hat D_{j',m}\right)_{\alpha_n}} \abs{D_\eta\psi^2_{z_{i'},n}} &\leq 2Ae^{2\eps n} \sup_{\eta \in \left(\hat D_{j',m}\right)_{\alpha_n}} \LnormAt[n]{z_{i'}}{D_\eta\psi^2_{z_{i'},n}}\\
&\leq 2Ae^{2\eps n} \sup_{\eta \in \uLball[\eta^2_{i',n}]{\delta'_n}{z_{i'},n}} \LnormAt[n]{z_{i'}}{D_\eta\psi^2_{z_{i'},n}}\\
&\leq 2Ae^{2\eps n} e^{-7 d \eps n}\\
&\leq 2A e^{-5 \eps n}.
\end{align*}
Choosing $\const{N}{8}(\theta) \geq \max\left\{\const{N}{1}(\theta/2);\const{N}{4}\right\}$ such that $2A e^{-5 \eps n} \leq \theta$ for all $n \geq \const{N}{8}$ then we can estimate the second term via Proposition \ref{prop:Appendix1} by
\begin{align*}
1 \leq \frac{\hat\lambda_n^2(\hat{\bar D}^2_i)}{\vol((\hat D_{j',m})_{\alpha_n})} \leq 1 + 2^{d-k}\theta.
\end{align*}
Analogously we we can estimate the forth term by
\begin{align*}
1 - 2^{d-k}\theta \leq \frac{\vol(\hat D_{j',m})}{\hat\lambda_n^1(\hat D^1_i)} \leq 1.
\end{align*}
The estimate on the third term is \rref{eq:EstimateOnTheDVolumes}. Alltogether this implies with $\abs{abc - 1} \leq \abs{a -1}bc + \abs{b-1}c + \abs{c-1}$ the desired, i.e. for $n \geq \const{N}{8}(\theta)$ we have
\begin{align*}
\frac{\lambda^2_n(\bar D^2_i)}{\lambda^1_n(D^1_i)} \leq \const{C}{5} \left(2\theta + \frac{\alpha_0}{\theta d_0}(1 + \theta)\right),
\end{align*}
where $\const{C}{5} := 2^{d-k}\sqrt{d-k}$.
\end{proof}

\section{Proof of the Absolute Continuity Theorem} \label{sec:FinalProofACT}

Now we are able to sate the main proof of the absolute continuity theorem. Let us repeat its formulation.

\begin{thmACT}
Let $\Delta^l$ be given as above. 
\begin{enumerate}
\item There exist numbers $0 < q_{\Delta^l} < \delta_{\Delta^l}/2$ and $\eps_{\Delta^l} > 0$ such that for every $(\omega,x) \in \Delta^l$ and $0 < q \leq q_{\Delta^l}$ the familiy $\collLSM{x}{q}$ is absolutely continuous.

\item For every $\tilde C \in (0,1)$ there exist numbers $0 < q_{\Delta^l}(\tilde C) < \delta_{\Delta^l}/2$ and $\eps_{\Delta^l}(\tilde C) > 0$ such that for each $(\omega,x) \in \Delta^l$ with $\lambda(\Delta_{\omega}^l) > 0$ and $x$ is a density point of $\Delta_{\omega}^l$ with respect to $\lambda$, and each two submanifolds $W^1$ and $W^2$ transversal to $\collLSM{x}{q_{\Delta^l}(\tilde C)}$ and satisfying $\norm{W^{i}} \leq \eps_{\Delta^l}(\tilde C)$, $i = 1,2$, the Poincar\'e map $P_{W^1,W^2}$ is absolutely continuous and the Jacobian $J(P_{W^1,W^2})$ satisfies the inequality
\begin{align*}
\abs{J(P_{W^1,W^2})(y) - 1} \leq \tilde C
\end{align*}
for $\lambda_{W^1}$-almost all $y \in W^1 \cap \localLSM{x}{q_{\Delta^l}(\tilde C)}$.
\end{enumerate}
\end{thmACT}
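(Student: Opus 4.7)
The strategy follows the Anosov--Sinai scheme adapted to the random setting from \citep{Katok86}: for a Borel set $A \subset W^1 \cap \localLSM{x}{q}$ the plan is to compare $\lambda_{W^1}(A)$ with $\lambda_{W^2}(P_{W^1,W^2}(A))$ by pushing forward under $f^n_\omega$ for $n$ large, inside the covering constructed in Section \ref{sec:finalCovering}. The whole of Section \ref{sec:ACT} is machinery built for this single computation; what remains is parameter juggling and a Vitali-style passage to the limit.

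Concretely, for Part (i) I would fix $C \in (0,1)$ (say $C = 1/2$) and set $q_{\Delta^l} := \const{q}{3}_C$ and $\eps_{\Delta^l} := \eps_C$ from Section \ref{sec:ChoiceOfDelta}; these depend only on $\Delta^l$, which is exactly the required uniformity. Given a Borel $A \subset W^1 \cap \localLSM{x}{q}$, pick $P\in W^1$, $\beta$ and $h$ with $A\subset Q(P,h)$ and $h+\beta<h_P$, and $\delta_0<\delta_{P,\beta,h}$, and apply Lemma \ref{lem:FirstInclusion}, Lemma \ref{lem:Woverlines} and the construction of Section \ref{sec:finalCovering} to obtain at large $n$ a covering $\{D^1_i\}_{i=1}^N$ of $\overline W^1_n(1/2) \supset f^n_\omega(A)$. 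Lemma \ref{lem:CompOfWns} gives $P_{W^1,W^2}(A) \subset \bigcup_i (f^n_\omega)^{-1}(\bar D^2_i)$; chaining Proposition \ref{prop:CompD1DbarD2}, Lemma \ref{lem:CompPullingBack} and Lemma \ref{lem:CompSumUnion} then yields
\begin{equation*}
\lambda_{W^2}(P_{W^1,W^2}(A)) \le (1+\kappa_1) \sum_i \lambda^1_0\bigl((f^n_\omega)^{-1}(D^1_i)\bigr) \le (1+\kappa_2)\, \lambda_{W^1}(Q(P,h+\beta)),
\end{equation*}
with $\kappa_1,\kappa_2$ depending explicitly on $C,\theta$ and $\alpha_0/(\theta d_0)$. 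Letting $\beta\downarrow 0$ and approximating $A$ from outside by finite unions of such balls through outer regularity, and running the symmetric argument with $W^1$ and $W^2$ exchanged, yields mutual absolute continuity with a uniform Radon--Nikodym constant depending only on $\Delta^l$.

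For Part (ii), given $\tilde C \in (0,1)$, I would choose $C$ small enough that $\const{C}{2}C,\const{C}{3}C \le \tilde C/4$, then $\theta \in (0,\min\{1/18,1/(3\const{C}{1})\})$ with $\const{C}{4}\theta$ and $2\const{C}{5}\theta$ below $\tilde C/4$, then $\alpha_0 < \theta d_0/\sqrt{d-k}$ with $\const{C}{5}(1+\theta)\alpha_0/(\theta d_0) \le \tilde C/4$, and set $q_{\Delta^l}(\tilde C) := \const{q}{3}_C$, $\eps_{\Delta^l}(\tilde C) := \eps_C$. Applying the chain above to $A = Q(y,\rho)$ for $y \in W^1 \cap \localLSM{x}{q_{\Delta^l}(\tilde C)}$ and dividing by $\lambda_{W^1}(Q(y,\rho))$, together with the symmetric lower bound, gives
\begin{equation*}
1-\tilde C \;\le\; \liminf_{\rho\to 0}\frac{\lambda_{W^2}(P_{W^1,W^2}(Q(y,\rho)))}{\lambda_{W^1}(Q(y,\rho))} \;\le\; \limsup_{\rho\to 0}\frac{\lambda_{W^2}(P_{W^1,W^2}(Q(y,\rho)))}{\lambda_{W^1}(Q(y,\rho))} \;\le\; 1+\tilde C,
\end{equation*}
and the Lebesgue differentiation theorem converts this into the pointwise Jacobian bound $\abs{J(P_{W^1,W^2})(y)-1}\le\tilde C$ for $\lambda_{W^1}$-a.e.~$y$. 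The density--point hypothesis on $\Delta^l_\omega$ enters exactly here: without it, the part of $Q(y,\rho)$ lying outside $\localLSM{x}{q_{\Delta^l}(\tilde C)}$ could contribute non-trivially to $\lambda_{W^1}(Q(y,\rho))$, and the covering argument, which only controls the \emph{good} part $Q(y,\rho)\cap\localLSM{x}{q_{\Delta^l}(\tilde C)}$, would not recover the full measure in the limit.

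The main obstacle is the parameter hierarchy and the verification of uniformity over $\Delta^l$: the quantities $\tilde C$, $C$, $\theta$, $\alpha_0$ and $n$ must be chosen in this strict order, with $n\ge\max\{\const{N}{1}(\theta/2),\const{N}{4},\const{N}{6}(\alpha_0),\const{N}{7}(\theta),\const{N}{8}(\theta)\}$ and covering cardinalities that blow up with $n$, while the per-piece errors decay. One must check throughout that none of these thresholds depends on the particular $(\omega,x)\in\Delta^l$ in a way that would destroy the uniform choice of $q_{\Delta^l}$ and $\eps_{\Delta^l}$; this is ultimately what the compactness of $\Delta^l$, built systematically into every preparatory lemma, guarantees.
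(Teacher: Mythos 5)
Your treatment of part (i) is essentially the paper's own proof: the same choice $q_{\Delta^l}=\const{q}{3}_C$, $\eps_{\Delta^l}=\eps_C$, the same chain (Lemma \ref{lem:FirstInclusion}, Lemma \ref{lem:Woverlines}, Lemma \ref{lem:CompOfWns}, Proposition \ref{prop:CompD1DbarD2}, Lemma \ref{lem:CompPullingBack}, Lemma \ref{lem:CompSumUnion}) leading to the ball estimate $\lambda_{W^2}\left(\poincare{Q(P,h)\cap\localLSM{x}{q_{\Delta^l}}}\right)\le(1+\const{C}{6}C)\,\lambda_{W^1}(Q(P,h))$, and the limit $\beta,\theta\to0$; together with the exchanged estimate for $P_{W^2,W^1}$ this indeed gives $\lambda_{W^1}\approx\lambda_{W^2}\circ P_{W^1,W^2}$, so part (i) is fine.

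In part (ii) there is a genuine gap at the lower bound. The one-sided ball estimates do not "symmetrize" the way you use them: applying the chain with $W^1$ and $W^2$ exchanged controls $\lambda_{W^1}\left(P_{W^2,W^1}\left(Q'\cap\localLSM{x}{q}\right)\right)$ for balls $Q'$ \emph{in} $W^2$, whereas your liminf involves $\poincare{Q(y,\rho)\cap\localLSM{x}{q}}$, which is not a ball in $W^2$; so the claimed inequality $\liminf\ge 1-\tilde C$ does not follow from "the symmetric lower bound" as stated. The paper closes this by a different mechanism: it introduces the set $\mathcal{T}$ of points $y$ that are density points of $W^1\cap\localLSM{x}{q}$ in $W^1$ \emph{and} whose images $\poincare{y}$ are density points of $W^2\cap\localLSM{x}{q}$ in $W^2$, proves the upper bound $J(P_{W^1,W^2})(y)\le 1+\const{C}{6}C$ on $\mathcal{T}$ via Lebesgue--Vitali, runs the identical argument for $P_{W^2,W^1}$ at $\poincare{y}$, and converts it into the lower bound through the reciprocal identity $J(P_{W^1,W^2})(y)=1/J(P_{W^2,W^1})(\poincare{y})$; the statement that $\lambda_{W^2}$-a.e.\ point of $W^2\cap\localLSM{x}{q}$ lies in $\poincare{\mathcal{T}}$ (needed for this to cover almost every $y$) uses the absolute continuity of $P^{-1}$ from part (i). Some such step must be supplied; without it your two-sided density bound, and hence the Jacobian estimate, is not established.

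Secondly, you misidentify the role of the hypothesis that $\lambda(\Delta^l_\omega)>0$ and $x$ is a density point of $\Delta^l_\omega$. The contribution of $Q(y,\rho)\setminus\localLSM{x}{q}$ is controlled for $\lambda_{W^1}$-a.e.\ $y$ by the Lebesgue density theorem applied \emph{inside the submanifold} $W^1$ (a.e.\ point of $W^1\cap\localLSM{x}{q}$ is a density point of that set in $W^1$), independently of any property of $x$. In the paper the hypothesis on $x$ is used elsewhere: by Fubini over the family of parallel transversals $W_\xi$ (graphs over $H_0(\omega,x)$ at height $\xi\in E_0(\omega,x)$) one finds $\xi$ with $\lambda_{W_\xi}\left(W_\xi\cap\localLSM{x}{\const{q}{3}_C}\right)>0$, and then the absolute continuity of $P_{W^1,W_\xi}$ and $P_{W_\xi,W^2}$ from part (i) yields $\lambda_{W^i}\left(W^i\cap\localLSM{x}{\const{q}{3}_C}\right)>0$ for $i=1,2$, which is what makes the two-sided density/reciprocal-Jacobian argument non-vacuous. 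Your proposal neither secures this positivity nor explains the hypothesis correctly, so this part needs to be reworked along the paper's lines (or an equivalent argument given).
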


\begin{proof}
{\it Case i)}
Fix once and for all $(\omega,x) \in \Delta^l$ and some $C \in (0,1)$. Then set $q_{\Delta^l} := \const{q}{3}_C$ and $\eps_{\Delta^l} := \eps_C$, both defined in Section \ref{sec:ChoiceOfDelta}. Let us remark that if the family $\collLSM{x}{q_{\Delta^l}}$ is absolutely continuous then $\collLSM{x}{q}$ is absolutely continuous for any $0 < q \leq q_{\Delta^l}$.

For any $P \in W^1$ and samll $ 0 < h <h_P$ we denote as before by $Q(P,h)$ the closed ball in $W^1$ centered at $P$ of radius $h$. We will show that there exist constant $\const{C}{6}$ such that for any two submanifolds $W^1$ and $W^2$ transversal to $\collLSM{x}{q_{\Delta^l}}$ satisfying $\norm{W^i}\leq \eps_{\Delta^l}$ we have
\begin{align}\label{eq:proofdone}
 \lambda_{W^2}\left(\poincare{Q(P,h) \cap \localLSM{x}{q_{\Delta^l}}}\right) \leq (1+\const{C}{6}C) \lambda_{W^1}(Q(P,h)).
\end{align}
This implies that $\lambda_{W^2}\left(\poincare{\cdot \cap \localLSM{x}{q_{\Delta^l}}}\right) \ll \lambda_{W^1}(\cdot)$, which implies the assertion since $\mathcal{B}\left( W^1 \cap \Delta^l_\omega(x,q_{\Delta^l})\right) \subseteq \mathcal{B}\left( W^1\right)$.

Now fix $P \in  W^1 \cap \localLSM{x}{q_{\Delta^l}}$, $0 < \beta < h_P$ and $0 < h < h_P - \beta$. We will use the covering of the transversal manifolds presented in Section \ref{subsec:covering} and \ref{sec:finalCovering}. For the fixed parameters $P$, $\beta$, $h$ and the transversal manifolds there exists according to Lemma \ref{lem:FirstInclusion} some $\delta_{P,\beta,h} > 0$. Now let us fix $0 < \delta_0 < \delta_{P,\beta,h}$,  $0 < \theta < \min\left\{\frac{1}{18};\frac{1}{3\const{C}{1}}\right\}$ (where $\const{C}{1}$ is the one from Lemma \ref{lem:EstimateOnTheVolume}) and $0 < \alpha_0 < \frac{\theta d_0}{\sqrt{d-k}}$, where $d_0 = \frac{\delta_0}{12A}$ as in Section \ref{sec:ProjectionLemmas}. For $n\geq \const{N}{9}(\alpha_0,\theta) := \max\left\{\const{N}{6}(\alpha_0); \const{N}{7}(\theta); \const{N}{8}(\theta)\right\}$ we can apply the covering construction of the previous sections to obtain a covering $\left\{D_i^1\right\}_{1 \leq i\leq N}$ of $f^n_\omega\left(D(P,h)\right)$, where $D(P,h) := Q(P,h) \cap \localLSM{x}{q_{\Delta^l}}$ and sets $\left\{\bar D_i^2\right\}_{1 \leq i\leq N}$. These satisfy by Lemma \ref{lem:CompOfWns} for all $1 \leq i \leq N$ 
\begin{align*}
\poincare{\left(f^n_\omega\right)^{-1}\left(D^1_i\right) \cap \localLSM{x}{q_{\Delta^l}}} \subset \left(f^n_\omega\right)^{-1}\left(\bar D^2_i\right).
\end{align*}
Then since by Lemma \ref{lem:Woverlines} and \rref{eq:PropertiesOfDis} for $n \geq \const{N}{9}(\alpha_0,\theta)$
\begin{align*} 
 \poincare{D(p,h)} &= \poincare{\left(f^n_\omega\right)^{-1}f^n_\omega \left( D(p,h) \right) \cap \localLSM{x}{q_{\Delta^l}}} \notag\\
&\subseteq  \poincare{\left(f^n_\omega\right)^{-1} \left( \bigcup_{i=1}^N  D^1_i \right) \cap \localLSM{x}{q_{\Delta^l}}} \notag\\
&= \bigcup_{i=1}^N  \poincare{\left(f^n_\omega\right)^{-1} \left( D^1_i \right) \cap \localLSM{x}{q_{\Delta^l}}} \notag\\
&\subseteq \bigcup_{i=1}^N \left(f^n_\omega\right)^{-1}\left(\bar D^2_i\right),
\end{align*}
we get
\begin{align} \label{eq:FirstEstimate}
\lambda_{W^2}\left(\poincare{D(p,h)}\right) &\leq \lambda_{W^2}\left(\bigcup_{i=1}^N \left(f^n_\omega\right)^{-1}\left(\bar D^2_i\right)\right)\notag \\
&\leq \sum_{i=1}^N \lambda_{W^2}\left(\left(f^n_\omega\right)^{-1}\left(\bar D^2_i\right)\right).
\end{align}
Now let $\alpha_0 := \frac{\theta^2 d_0}{\sqrt{d-k}}$ and let $\theta < \min\left\{\frac{1}{18};\frac{1}{3\const{C}{1}};\frac{1}{4\const{C}{5}}\right\}$ then
\begin{align*} 
\const{C}{5}\left(2\theta + \frac{\alpha_0}{\theta d_0}(1 + \theta)\right) \leq 4\const{C}{5}\theta =: \tau < 1.
\end{align*}
The assumptions of Lemma \ref{lem:CompPullingBack} are satisfied because of Proposition \ref{prop:CompD1DbarD2}, such that we get for all $n \geq \const{N}{10}(\theta) := \const{N}{9}\left(\frac{\theta^2 d_0}{\sqrt{d-k}},\theta\right)$
\begin{align} \label{eq:PulledBack}
\lambda_{W^2}\left(\left(f^n_\omega\right)^{-1}\left(\bar D^2_i\right)\right) \leq \left(1 + \const{C}{3}\left(\tau + C\right)\right) \lambda_{W^1}\left(\left(f^n_\omega\right)^{-1}\left( D^1_i\right)\right).
\end{align}
Combining \rref{eq:FirstEstimate} and \rref{eq:PulledBack} and applying Lemma \ref{lem:CompSumUnion} we get for all $n \geq \const{N}{10}(\theta)$
\begin{align}\label{eq:FinalEstimate}
\lambda_{W^2}\left(\poincare{D(p,h)}\right) &\leq \left(1 + \const{C}{4}\left(\theta + C\right)\right) \left(1 + \const{C}{3}\left( \tau + C \right)\right) \lambda_{W^1}\left(\left(f^n_\omega\right)^{-1}\left( \bigcup_{i=1}^N D^1_i\right)\right)\notag\\
&\leq (1+\const{C}{6}(\theta+C)) \lambda_{W^1}\left(\left(f^n_\omega\right)^{-1}\left(\bigcup_{i=1}^N D^1_i\right)\right),
\end{align}
with $\const{C}{6}:=\const{C}{4} + 4\const{C}{3}\const{C}{5} + 2\const{C}{3}\const{C}{4}$. By the choice of the covering we get from Lemma \ref{lem:FirstInclusion} that
\begin{align*}
\bigcup_{i=1}^N D^1_i \subseteq \bigcup_{i=1}^{\const{M}{2}} \tilde W^1_n\left(z^1_i, y^1_i, \delta_n'\right) \subseteq f^n_\omega\left(Q(p,h+\beta)\right),
\end{align*}
which implies by \rref{eq:FinalEstimate} for $n \geq \const{N}{10}(\theta)$ 
\begin{align*}
\lambda_{W^2}\left(\poincare{D(p,h)}\right) &\leq (1+\const{C}{6}(\theta + C)) \lambda_{W^1}\left(Q(p,h+\beta)\right).
\end{align*}
Since $\beta >0$ and $\theta > 0$ can be chosen arbitrarily small, this finally implies \rref{eq:proofdone}.

\end{proof}

\begin{proof}[Proof of the Absolute Continuity Theorem ii)]

Now we will proof the second part of Theorem \ref{thm:ACT2}. Fix once and for all $(\omega,x) \in \Delta^l$ such that $\lambda(\Delta^l_\omega) > 0$ and $x \in \Delta^l_\omega$ is a density point of $\Delta^l_\omega$ with respect to the Lebesgue measure $\lambda$. For $C \in (0,1)$ let $\const{q}{3}_C$ and $\eps_C$ as in Section \ref{sec:ChoiceOfDelta}.

For each $\xi \in E_0(\omega,x)$ with $\LnormAt[0]{x}{\xi} < \const{q}{3}_C$ let us define the submanifold $W_\xi$ by the formula
\begin{align*}
W_\xi := \exp_x \left\{ (\xi, \eta): \eta \in H_0(\omega,x); \LnormAt[0]{x}{\eta} < \const{q}{3}_C \right\} \subset \pLball[\Delta,\omega]{x}{\const{q}{3}_C}) .
\end{align*}
Clearly each $W_\xi$ is a transversal submanifold to the familiy $\collLSM{x}{\const{q}{3}_C}$. Since $x$ is a density point of $\Delta^l_\omega$ we have $\lambda(\Delta^l_\omega \cap \pLball[\Delta,\omega]{x}{\const{q}{3}_C/2}) > 0$. Since by Fubini's theorem
\begin{align*}
0 < \lambda\left(\Delta^l_\omega \cap \pLball[\Delta,\omega]{x}{\const{q}{3}_C/2}\right) = \int_{\sLball{\const{q}{3}_C/2}{x,0}} \lambda_{W_\xi}\left(W_{\xi} \cap \Delta^l_\omega \right) \dx \lambda_{\sLball{\const{q}{3}_C/2}{x,0}}(\xi)
\end{align*}
and thus there exists $\xi \in \sLball{\const{q}{3}_C/2}{x,0}$ such that $\lambda_{W_\xi}\left(W_{\xi} \cap \Delta^l_\omega \right) > 0$. And because of $\localLSM{x}{\const{q}{3}_C} \supseteq \Delta^l_\omega \cap \pLball[\Delta,\omega]{x}{\const{q}{3}_C/2}$ we have $\lambda_{W_\xi}\left(W_{\xi} \cap \localLSM{x}{\const{q}{3}_C} \right) > 0$. Let $W^1$ and $W^2$ two transversal manifolds to $\collLSM{x}{\const{q}{3}_C}$ then consider the Poincar\'e maps $P_{ W^1,  W_\xi}$ and $P_{ W^{2},  W_\xi} = P_{ W_\xi,  W^2}^{-1}$. Clearly we have
\begin{align*}
P_{ W^1,  W^2} = P_{ W_\xi,  W^2} \circ P_{ W^1,  W_\xi}.
\end{align*}
Because these maps are absolutely continuous by i) of Theorem \ref{thm:ACT2}, we have for $i=1,2$
\begin{align*}
 \lambda_{W^i}\left( W^i \cap \localLSM{x}{\const{q}{3}_C} \right) > 0.
\end{align*}

The following construction is due to the following apllication to $P_{W^1,W^2}$ {\it and} its inverse $P_{W^1,W^2}^{-1} = P_{W^2,W^1}$. So let us consider the set $\mathcal{T}$ of all points $y \in  W^1 \cap \localLSM{x}{\const{q}{3}_C}$ such that $y$ is a density point of $ W^1 \cap \localLSM{x}{\const{q}{3}_C}$ with respect to $\lambda_{ W^1}$ and $\poincare{y}$ is a density point of $ W^2 \cap \localLSM{x}{\const{q}{3}_C}$ with respect to $\lambda_{ W^2}$. As $\lambda_{ W^1}$-almost all points of $ W^1 \cap \localLSM{x}{\const{q}{3}_C}$ are of density and as $P_{ W^1,  W^2}^{-1}$ is absolutely continuous, we have that $\lambda_{W^2}$-almost all points of $ W^2 \cap \localLSM{x}{\const{q}{3}_C}$ belong to $\poincare{\mathcal{T}}$.

Now let us take $y \in \mathcal{T}$. By the definition of a point of density for every $\kappa > 0 $ there exists $0 < h(\kappa) < h_y$ such that for every $0 < h < h(\kappa)$ one has
\begin{align*}
\lambda_{W^1}(Q(y,h)) \leq (1 + \kappa)\lambda_{W^1}(\mathcal{T} \cap Q(y,h)),
\end{align*}
where $Q(y,h)$ as before denotes the closed ball in $W^1$ with center $y$ and radius $h > 0$ with respect to the Euclidean metric. Since $\lambda_{ W^2}$-almost all points of $ W^2 \cap \localLSM{x}{\const{q}{3}_C}$ belong to $\poincare{\mathcal{T}}$ and because of \rref{eq:proofdone} we have for every $0 < h < h(\kappa)$
\begin{align*}
\lambda_{ W^2}\left(\poincare{\mathcal{T} \cap Q(y,h)}\right) &= \lambda_{ W^2}\left(\poincare{\localLSM{x}{\const{q}{3}_C} \cap Q(y,h)}\right) \\
&\leq (1 + \const{C}{6}C)\lambda_{ W^1}( Q(y,h))\\
&\leq (1 + \kappa)(1 + \const{C}{6}C)\lambda_{ W^1}(\mathcal{T} \cap Q(y,h)),
\end{align*}
i.e.
\begin{align} \label{eq:EstimateOnJac}
\frac{\lambda_{ W^2}\left(\poincare{\mathcal{T} \cap Q(y,h)}\right)}{\lambda_{ W^1}(\mathcal{T} \cap Q(y,h))} \leq (1 + \kappa)(1 + \const{C}{6}C)
\end{align}
Since $y$ is a density point the Lebesgue-Vitali theorem implies for $h \to 0$ that 
\begin{align*}
J(P_{ W^1,  W^2})(y) \leq (1 + \kappa)(1 + \const{C}{6}C),
\end{align*}
where $J(P_{ W^1,  W^2})$ denotes the Jacobian of the poincar\'e map, and since $\kappa > 0$ can be chosen arbitrarily samll we finally get
\begin{align*}
J(P_{ W^1,  W^2})(y) \leq 1 + \const{C}{6}C.
\end{align*}

As $y \in \mathcal{T}$ then $\poincare{y}$ is a density point of $ W^2 \cap \localLSM{x}{\const{q}{3}_C}$. Since in our cosideration and in particular in \rref{eq:EstimateOnJac} $P_{ W^1,  W^2}$ and $P_{ W^1,  W^2}^{-1}$ play completely symmetrical roles we get
\begin{align*}
J(P_{ W^1,  W^2}^{-1})(\poincare{y}) \leq 1 + \const{C}{6}C.
\end{align*}
Because of
\begin{align*}
J(P_{ W^1,  W^2})(y) = \frac{1}{J(P_{ W^1,  W^2}^{-1})(\poincare{y})}
\end{align*}
we have
\begin{align*}
J(P_{ W^1,  W^2})(y) \geq \frac{1}{1+\const{C}{6}C} \geq 1 - \const{C}{6}C.
\end{align*}
Choosing additionally $0 < C < \frac{1}{\const{C}{6}}$ we finally get
\begin{align*}
\abs{J(P_{ W^1,  W^2})(y) - 1} \leq \const{C}{6}C.
\end{align*}
Now let $\tilde C \in (0,1)$ as in the theorem then we define
\begin{align*}
q_{\Delta^l}(\tilde C) = \const{q}{3}_{\tilde C/\const{C}{6}} \quad \text{and} \quad 
\eps_{\Delta^l}(\tilde C) = \eps_{\tilde C/\const{C}{6}}
\end{align*}
and this finishes the proof of Theorem \ref{thm:ACT2} part ii).

\end{proof}
\appendix

\section{Appendix}

In the Appendix we will state some basic results from \citep{Katok86}. The first one gives an estimate of the volume of the graph a function.

\begin{proposition} \label{prop:Appendix1}
Let $p \in \N$, $U \subset \R^p$ be an open bounded set and $H$ some finite Hilbert space. Then for a $C^1$ mapping $g: U \to H$ with $\sup_{v \in U} \norm{D_vf} \leq a$ we have
\begin{align*}
 \vol_p(U) \leq m_p(\graph(f)) \leq (1 + a^2)^\frac{p}{2} \vol_p(U).
\end{align*}
Here $\vol_p$ denotes the $p$-dimensional Lebesgue measure and $m_p$ the $p$-dimensional Hausdorff measure in $\R^p \oplus H$, which coincides while restricted to a $p$-dimensional submanifold of $\R^p \oplus H$ since $H$ is a finite Hilbert space with the $p$-dimensional volume (Lebesgue measure) on this submanifold.
\end{proposition}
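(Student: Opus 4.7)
The plan is to parameterize the graph explicitly and apply the area formula, reducing the statement to a pointwise linear algebra bound on the Jacobian determinant. Concretely, define the $C^1$ embedding
\[
\Phi : U \to \R^p \oplus H, \qquad \Phi(v) = (v, f(v)),
\]
so that $\graph(f) = \Phi(U)$. The differential $D_v\Phi : \R^p \to \R^p \oplus H$ acts by $h \mapsto (h, D_vf\,h)$, and since $H$ is finite-dimensional, the area formula for $C^1$ embeddings into a Euclidean space gives
\[
m_p(\graph(f)) = \int_U \sqrt{\det\bigl((D_v\Phi)^{*}(D_v\Phi)\bigr)} \, dv.
\]

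The next step is to compute the Gram matrix. For $h, h' \in \R^p$,
\[
\sprod{D_v\Phi\,h, D_v\Phi\,h'} = \sprod{h,h'} + \sprod{D_vf\,h, D_vf\,h'},
\]
so $(D_v\Phi)^{*}(D_v\Phi) = I_p + (D_vf)^{*}(D_vf)$, where $I_p$ is the identity on $\R^p$. The operator $G(v) := (D_vf)^{*}(D_vf)$ is symmetric positive semi-definite, hence diagonalizable with non-negative eigenvalues $\mu_1(v), \dots, \mu_p(v)$. The largest eigenvalue equals $\norm{D_vf}^2$, which by hypothesis is at most $a^2$, so $0 \leq \mu_i(v) \leq a^2$ for every $i$.

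Consequently, the determinant factors as $\det(I_p + G(v)) = \prod_{i=1}^p (1 + \mu_i(v))$, and the eigenvalue bounds yield the two-sided estimate
\[
1 \;\leq\; \det(I_p + G(v)) \;\leq\; (1 + a^2)^p,
\]
which, after taking square roots, gives $1 \leq \sqrt{\det((D_v\Phi)^{*}(D_v\Phi))} \leq (1+a^2)^{p/2}$ pointwise on $U$. Integrating over $U$ and using boundedness of $U$ to ensure finiteness yields exactly
\[
\vol_p(U) \;\leq\; m_p(\graph(f)) \;\leq\; (1+a^2)^{p/2}\vol_p(U).
\]
There is no real obstacle here; the only thing to be careful about is invoking the correct version of the area formula for a $C^1$ map into a finite-dimensional ambient space, which is standard, and the observation that the operator norm bound on $D_vf$ translates into a uniform spectral bound on $G(v)$.
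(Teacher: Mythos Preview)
Your proof is correct and complete. The paper itself does not give an argument for this proposition; it simply cites \cite[Proposition II.3.2]{Katok86}, so you have supplied the standard self-contained area-formula proof that the paper defers to an external reference.
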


\begin{proof}
 This is \citep[Proposition II.3.2]{Katok86}
\end{proof}

Let $E$ and $E'$ be two real vector spaces of the same finite dimension, equipped with the scalar products $\langle \cdot, \cdot\rangle_E$ and $\langle \cdot, \cdot\rangle_{E'}$ respectively. If $E_1 \subset E$ is a linear subspace of $E$ and $A: E \to E'$ a linear mapping, then we can define the determinante of $A|_{E_1}$ to be
\begin{align*}
\abs{\determinante\left(A|_{E_1}\right)} := \frac{\vol_{E'_1}(A(U))}{\vol_{E_1}(U)},
\end{align*}
where $U$ is an arbitrary open and bounded subset of $E_1$ and $E'_1$ is a arbitrary linear subspace of $E'$ of the same dimension as $E_1$ with $A(U) \subset E'_1$ (see \citep[Section II.3]{Katok86}). Further for two linear subspaces $E_1, E_2 \subset E$ of the same dimension we define the aperture between $E_1$ and $E_2$ with respect to the norm $\abs{\cdot}_E$ to be
\begin{align*}
\Gamma_{\abs{\cdot}_E}(E_1,E_2) := \sup_{\substack{e_1 \in E_1 \\ \abs{e_1}_E = 1}} \inf_{e_2 \in E_2} \abs{e_1 - e_2}_E.
\end{align*}
Then we have the following lemma.

\begin{lemma} \label{lem:BasicLemmaOnDeterminante}
For every $p \in \N$ there exists a number $\const{C}{7} = \const{C}{7}(p) > 0$ such that for every two finite dimensional Hilbert spaces $H_1$ and $H_2$, for any $a \geq 1$, any two linear operators $A,B: H_1 \to H_2$ with $\abs{A}_{H_1} \leq a$, $\abs{B}_{H_1} \leq a$ and any two linear subspaces $E_1, E_2 \subset H_1$ of dimension $p$ we have
\begin{align*}
\abs{\abs{\determinante(A|_{E_1})} - \abs{\determinante(B|_{E_2})}} \leq \const{C}{7} a^p \left(\abs{A - B}_{H_1} + \Gamma_{\abs{\cdot}_{H_1}}(E_1,E_2)\right).
\end{align*}
\end{lemma}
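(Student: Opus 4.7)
The plan is to represent each determinant as the norm of a $p$-vector built from an orthonormal basis and then telescope. First I would treat separately the regime where $\Gamma := \Gamma_{\abs{\cdot}_{H_1}}(E_1,E_2)$ is not small: the crude bound $\abs{\determinante(A|_{E_i})} \leq a^p$ (which follows from Hadamard's inequality applied to the wedge product) shows that the assertion is trivial for $\Gamma \geq 1/(4p)$ with any $\const{C}{7} \geq 8p$, so it suffices to work under the smallness assumption $\Gamma < 1/(4p)$.

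The key linear-algebraic step is to construct \emph{compatible} orthonormal bases: given any orthonormal basis $\{e_1,\dots,e_p\}$ of $E_1$, I would produce an orthonormal basis $\{f_1,\dots,f_p\}$ of $E_2$ such that $\abs{e_i-f_i}_{H_1} \leq c_p \Gamma$ for a constant $c_p$ depending only on $p$. The recipe is to set $\tilde f_i := P_{E_2}e_i$, where $P_{E_2}$ is the orthogonal projection onto $E_2$; by definition of the aperture $\abs{e_i - \tilde f_i} \leq \Gamma$. For $\Gamma$ small the Gram matrix $(\sprod{\tilde f_i,\tilde f_j})_{i,j}$ is close to the $p\times p$ identity, hence invertible, and Gram--Schmidt applied to $\{\tilde f_i\}$ (equivalently, multiplying by the inverse square root of the Gram matrix) yields an orthonormal basis $\{f_i\}$ of $E_2$ satisfying the desired estimate.

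Next I would use the identification $\abs{\determinante(A|_{E_1})} = \abs{Ae_1\wedge\cdots\wedge Ae_p}$ and $\abs{\determinante(B|_{E_2})} = \abs{Bf_1\wedge\cdots\wedge Bf_p}$ with $\abs{\cdot}$ the natural norm on $\Lambda^p H_2$, which follows directly from the definition of the restricted determinant as a volume ratio. The reverse triangle inequality together with the standard telescoping in the wedge product gives
\begin{align*}
\abs{\abs{\determinante(A|_{E_1})} - \abs{\determinante(B|_{E_2})}}
&\leq \abs{Ae_1\wedge\cdots\wedge Ae_p - Bf_1\wedge\cdots\wedge Bf_p}\\
&\leq \sum_{i=1}^p \abs{Ae_1\wedge\cdots\wedge(Ae_i-Bf_i)\wedge\cdots\wedge Bf_p}\\
&\leq p\,a^{p-1}\max_{i}\abs{Ae_i-Bf_i}_{H_2},
\end{align*}
where the last inequality uses Hadamard's bound together with $\abs{Ae_j},\abs{Bf_j}\leq a$. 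Splitting $\abs{Ae_i-Bf_i} \leq \abs{A}\abs{e_i-f_i} + \abs{(A-B)f_i} \leq a\,c_p\Gamma + \abs{A-B}_{H_1}$ yields the bound with $\const{C}{7} = p(c_p+1)$.

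The main obstacle will be the bookkeeping in the compatible orthonormal bases construction: controlling the Gram--Schmidt output of $\{\tilde f_i\}$ explicitly by $c_p\Gamma$ requires some care, most cleanly carried out via a power-series expansion for the inverse square root of the Gram matrix $I + O(\Gamma)$, which produces the constant $c_p$. Once this perturbative lemma is in hand, the wedge-product telescoping and the final assembly are routine.
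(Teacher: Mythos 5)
Your argument is correct. Note that the paper itself gives no proof of this lemma; it simply cites \citep[Lemma II.3.2]{Katok86}, so your write-up is a self-contained replacement rather than a variant of an argument in the text, and it follows the natural route: dispose of the regime $\Gamma_{\abs{\cdot}_{H_1}}(E_1,E_2)\geq 1/(4p)$ by the Hadamard bound $\abs{\determinante(A|_{E_i})}\leq a^p$, then for small aperture build an orthonormal basis of $E_2$ that is $c_p\Gamma$-close to a given orthonormal basis of $E_1$ (projection onto $E_2$ followed by orthonormalization of a nearly orthonormal system), identify the restricted determinants with norms of $p$-fold wedge products, and telescope. All the individual steps check out: $\abs{e_i-P_{E_2}e_i}\leq\Gamma$ is exactly the definition of the aperture in the only direction you use (so the asymmetry of $\Gamma$ is harmless); the Gram matrix of the projected vectors is within $O(p\Gamma)$ of the identity, so the inverse-square-root (L\"owdin) orthonormalization yields the claimed $c_p\Gamma$ perturbation, with $c_p$ depending only on $p$; the volume-ratio definition of $\abs{\determinante(A|_{E_1})}$ does coincide with $\abs{Ae_1\wedge\dots\wedge Ae_p}$ (including the degenerate case, where both sides vanish); and the telescoping plus Hadamard gives $p\,a^{p-1}\max_i\abs{Ae_i-Bf_i}$, which assembles, using $a\geq 1$, into the stated bound. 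Two small points of precision: Gram--Schmidt and multiplication by $G^{-1/2}$ are not literally equivalent (they produce different orthonormal bases), but either yields the needed $c_p\Gamma$ estimate, and the route via $G^{-1/2}$ that you actually propose is the cleaner one; and the perturbative estimate for $(I+O(\Gamma))^{-1/2}$, which you defer, is indeed routine spectral calculus, so deferring it does not leave a gap.
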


\begin{proof}
This is \citep[Lemma II.3.2]{Katok86}.
\end{proof}

For a linear operator $A: H_1 \to H_2$ between two Hilbert spaces $H_1$ and $H_2$ let us denote the graph of $A$ by $\graph(A) :=\left\{(x,Ax) : x \in H_1 \right\} \subset H_1 \times H_2$. Then the aperture between two graphs can be bounded as follows.

\begin{lemma} \label{lem:BasicLemmaOnAperture}
Let $H_1$ and $H_2$ be two finite dimensional Hilbert spaces. For any two linear operators $A,B: H_1 \to H_2$ we have
\begin{align*}
\Gamma_{\abs{\cdot}_{H_1 \times H_2}}(\graph(A),\graph(B)) \leq 2 (\abs{A}_{H_1} + \abs{B}_{H_1}).
\end{align*}
\end{lemma}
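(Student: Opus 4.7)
The statement is an elementary linear-algebra fact, and I expect the proof to be a short direct estimate rather than anything involving the dynamics or stable-manifold machinery. My plan is to take an arbitrary unit vector in $\graph(A)$, use the natural lift from $H_1$ into $\graph(B)$ as a candidate competitor in the infimum defining $\Gamma$, and bound the resulting difference by the triangle inequality.

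Concretely, let $(x, Ax) \in \graph(A)$ with $\abs{(x,Ax)}_{H_1 \times H_2} = 1$. Since the norm on the product is the Hilbert-space norm $\abs{(x,y)}_{H_1\times H_2}^2 = \abs{x}_{H_1}^2 + \abs{y}_{H_2}^2$, we immediately obtain $\abs{x}_{H_1} \leq 1$. The element $(x, Bx)$ lies in $\graph(B)$ and is the obvious competitor, giving
\begin{align*}
\inf_{(y,By)\in\graph(B)} \abs{(x,Ax) - (y,By)}_{H_1 \times H_2}
&\leq \abs{(0, Ax - Bx)}_{H_1 \times H_2} = \abs{Ax - Bx}_{H_2}.
\end{align*}
By the triangle inequality and the operator-norm bound,
\begin{align*}
\abs{Ax - Bx}_{H_2} \leq \abs{Ax}_{H_2} + \abs{Bx}_{H_2} \leq \left(\abs{A}_{H_1} + \abs{B}_{H_1}\right)\abs{x}_{H_1} \leq \abs{A}_{H_1} + \abs{B}_{H_1}.
\end{align*}

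Taking the supremum over unit vectors $(x, Ax) \in \graph(A)$ preserves the bound, yielding $\Gamma_{\abs{\cdot}_{H_1\times H_2}}(\graph(A),\graph(B)) \leq \abs{A}_{H_1}+\abs{B}_{H_1}$, which is in fact stronger than the claimed bound by a factor of $2$. The extra factor $2$ in the statement is simply slack (perhaps inserted to accommodate symmetric estimates when the roles of $A$ and $B$ are swapped in applications). There is no real obstacle here; the only thing to be careful about is the relationship between the Hilbert-space norm on the product $H_1 \times H_2$ and the component norm on $H_1$, but the Pythagorean identity makes $\abs{x}_{H_1} \leq \abs{(x, Ax)}_{H_1 \times H_2}$ automatic.
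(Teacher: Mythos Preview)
Your argument is correct and in fact yields the sharper bound $\Gamma_{\abs{\cdot}_{H_1\times H_2}}(\graph(A),\graph(B)) \leq \abs{A}_{H_1}+\abs{B}_{H_1}$. The paper does not supply its own proof of this lemma; it simply cites \cite[Proposition II.3.4]{Katok86}, so your direct elementary estimate is more than what the paper provides, and there is nothing to compare approaches against.
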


\begin{proof}
This is \citep[Proposition II.3.4]{Katok86}.
\end{proof}

\section*{Acknowledgement}
The present research was supported by the International Research Training Group {\it Stochastic Models of Complex Processes} funded by the German Research Council (DFG). The author gratefully thanks Michael Scheutzow and Simon Wasserroth from TU Berlin for their support and several fruitful discussions.

\bibliography{bibliography}

\providecommand{\bysame}{\leavevmode\hbox to3em{\hrulefill}\thinspace}
\providecommand{\MR}{\relax\ifhmode\unskip\space\fi MR }
\providecommand{\MRhref}[2]{%
  \href{http://www.ams.org/mathscinet-getitem?mr=#1}{#2}
}
\providecommand{\href}[2]{#2}
\begin{thebibliography}{10}

\bibitem{Anosov67}
D.~V. Anosov and Ja.~G. Sina{\u\i}, \emph{Certain smooth ergodic systems},
  Uspehi Mat. Nauk \textbf{22} (1967), no.~5 (137), 107--172.

\bibitem{Arnold98}
L.~Arnold, \emph{Random dynamical systems}, Springer Monographs in Mathematics,
  Springer-Verlag, Berlin, 1998.

\bibitem{Arnold95}
L.~Arnold and M.~Scheutzow, \emph{Perfect cocycles through stochastic
  differential equations}, Probab. Theory Related Fields \textbf{101} (1995),
  no.~1, 65--88.

\bibitem{Barreira07}
L.~Barreira and Ja.~B. Pesin, \emph{Nonuniform hyperbolicity}, Encyclopedia of
  Mathematics and its Applications, vol. 115, Cambridge University Press,
  Cambridge, 2007.

\bibitem{Biskamp11}
M.~Biskamp, \emph{Pesin's formula for random dynamical systems on {$\R^d$}},
  preprint, 2011.

\bibitem{Fathi83}
A.~Fathi, M.-R. Herman, and J.-C. Yoccoz, \emph{A proof of {P}esin's stable
  manifold theorem}, Geometric dynamics ({R}io de {J}aneiro, 1981), Lecture
  Notes in Mathematics, vol. 1007, Springer-Verlag, Berlin, 1983, pp.~177--215.

\bibitem{Katok86}
A.~Katok, J.-M. Strelcyn, F.~Ledrappier, and F.~Przytycki, \emph{Invariant
  manifolds, entropy and billiards; smooth maps with singularities}, Lecture
  Notes in Mathematics, vol. 1222, Springer-Verlag, Berlin, 1986.

\bibitem{Kifer86}
Y.~Kifer, \emph{Ergodic theory of random transformations}, Progress in
  Probability and Statistics, vol.~10, Birkh\"auser Boston Inc., Boston, MA,
  1986.

\bibitem{Kunita90}
H.~Kunita, \emph{Stochastic flows and stochastic differential equations},
  Cambridge Studies in Advanced Mathematics, vol.~24, Cambridge University
  Press, Cambridge, 1990.

\bibitem{Ledrappier88}
F.~Ledrappier and L.-S. Young, \emph{Entropy formula for random
  transformations}, Probab. Theory Related Fields \textbf{80} (1988), no.~2,
  217--240.

\bibitem{Liu95}
P.-D. Liu and M.~Qian, \emph{Smooth ergodic theory of random dynamical
  systems}, Lecture Notes in Mathematics, vol. 1606, Springer-Verlag, Berlin,
  1995.

\bibitem{Pesin76}
Ja.~B. Pesin, \emph{Families of invariant manifolds that correspond to nonzero
  characteristic exponents}, Izv. Akad. Nauk SSSR Ser. Mat. \textbf{40} (1976),
  no.~6, 1332--1379, 1440.

\bibitem{Pesin77}
\bysame, \emph{Characteristic {L}japunov exponents, and smooth ergodic theory},
  Uspehi Mat. Nauk \textbf{32} (1977), no.~4 (196), 55--112, 287.

\bibitem{Pesin77b}
\bysame, \emph{A description of the {$\pi $}-partition of a diffeomorphism with
  an invariant measure}, Mat. Zametki \textbf{22} (1977), no.~1, 29--44.

\bibitem{Ruelle79}
D.~Ruelle, \emph{Ergodic theory of differentiable dynamical systems}, Inst.
  Hautes \'Etudes Sci. Publ. Math. (1979), no.~50, 27--58.

\end{thebibliography}
\bibliographystyle{amsplain}

\end{document}